\DeclareMathOperator{\Tr}{Tr}
\DeclareMathOperator{\Alt}{Alt}
\DeclareMathOperator{\Bil}{Bil}
\DeclareMathOperator{\M}{\mathcal{M}}
\DeclareMathOperator{\N}{\mathcal{N}}
\DeclareMathOperator{\rank}{rank}
\DeclareMathOperator{\rad}{rad}
\DeclareMathOperator{\A}{\mathcal{A}}
\DeclareMathOperator{\Symm}{Symm}
\theoremstyle{plain}
\newtheorem{theorem}{Theorem}
\newtheorem{corollary}{Corollary}
\newtheorem{lemma}{Lemma}
\theoremstyle{definition}
\newtheorem{definition}{Definition}
\begin{document}
\title[Rank and dimension in subspaces]
{Connections between rank and dimension \\ 
for subspaces of bilinear forms}

\author[R. Gow]{Rod Gow}
\address{School of Mathematics and Statistics\\
University College Dublin\\
 Ireland}
\email{rod.gow@ucd.ie}


\begin{abstract} 
Let $K$ be a field and let $V$ be a vector space of dimension $n$ over $K$. Let $\Bil(V)$ denote the vector space of bilinear forms defined on $V\times V$.
Let $\M$ be a subspace of $\Bil(V)$ and let $\rank(\M)$ denote the set of different 
positive integers that occur as the ranks of the non-zero elements of $\M$.
Setting $r=|\rank(\M)|$, our aim is to obtain an upper bound for $\dim \M$ in terms of $r$ and $n$ under various hypotheses. As a sample of what we prove, we mention the following. Suppose that $m$ is the largest integer in $\rank(\M)$. Then if $m\leq \lceil n/2\rceil$ and $|K|\geq m+1$, we have 
$\dim \M\leq rn$. The case $r=1$ corresponds to a constant rank space and it is conjectured that $\dim \M\leq n$ when $\M$ is a constant rank $m$ space and $|K|\geq m+1$.
We prove that the dimension bound for a constant rank $m$ space $\M$ holds provided $|K|\geq m+1$ and either $K$ is finite or $K$ has characteristic different from 2 and $\M$ consists of symmetric forms. In general, we show that if $\M$ is a constant rank $m$ subspace and
$|K|\geq m+1$, then $\dim \M\leq \max\,(n,2m-1)$.
We also provide more detailed results about constant rank subspaces over finite fields, especially subspaces of alternating or symmetric bilinear forms.
\end{abstract}
\maketitle

\section{Introduction}

\noindent Let $K$ be a field and let $V$ be a vector space of finite dimension $n$ over $K$. We let $V^\times$ denote the subset of non-zero elements of $V$, and use similar notation for the subset of non-zero elements in any vector space.
Let 
$\Bil(V)$ 	
denote the 
$K$-vector space of all bilinear forms defined on $V\times V$.
Let $\Alt(V)$ denote the subspace of $\Bil(V)$ consisting of alternating bilinear forms and $\Symm(V)$  the subspace of symmetric bilinear forms.

Let $f$ be an element of $\Bil(V)$. Let $\rad_L f$ denote the left radical of $f$
and $\rad_R f$ denote the right radical of $f$. It is known that $\dim \rad_L f=\dim \rad_R f$ and the number $n-\dim \rad_L f$ is called the rank of $f$, which we denote by
$\rank f$. 

In the case that $f$ is alternating or symmetric,  $\rad_L f=\rad_R f$ and we call 
the common subspace the radical of $f$, denoted by $\rad f$. 

\begin{definition} \label{rank_definition}
Let $\M$ be a non-zero subspace of $\Bil(V)$. We let $\rank(\M)$ denote the set of different integers that occur as the ranks of the non-zero elements of $\M$. If $\M$
is the zero subspace, we set $\rank(\M)=0$.
\end{definition}

Thus $\rank(\M)=\{ \rank f: f\in \M^\times\}$, where we only include the different ranks that occur. Clearly, we have $\rank(\N)\leq \rank(\M)$ when $\N$ is a subspace of $\M$.

One purpose of this paper is to obtain an upper bound for $\dim \M$ in terms of $|\rank(\M)|$ and $n$. The results we obtain vary according to the nature of the forms and certain hypotheses we make, and may be subdivided into three types, described as follows.

Let $\M$ be a non-zero 
subspace of $\Bil(V)$, let $m$ be the largest integer in $\rank(\M)$ and let
$r=|\rank(\M)|$. Then if
$m\leq \lceil n/2\rceil$
and $|K|\geq m+1$, we have
\[
\dim \M\leq rn.
\]
This bound is optimal in non-trivial cases, but we do not know if the restriction
on the size of $m$ is essential. See Theorem \ref{bilinear_several_ranks_bound}. 

Suppose next that $\M$, as above,
is a subspace of $\Alt(V)$. Then if
$m\leq \lfloor n/2\rfloor$ 
and $|K|\geq m+1$, we have
\[
\dim \M\leq rn-\frac{r(r+1)}{2}.
\]
Examples show that this bound is also optimal in non-trivial ways, but the restriction on the size of $m$ is essential. See Theorem \ref{alternating_several_ranks_bound}. 

Finally, suppose that $\M$ is a subspace of $\Symm(V)$. Then if $K$ has characteristic different from 2 and $|K|\geq n$, we have
\[
 \dim \mathcal{M}\leq rn-\frac{r(r-1)}{2}.
\]
The hypothesis on the characteristic of $K$ is essential. This bound is optimal in some cases, but not so in general. For small values of $r$ and specific fields, examples show that the bound is reasonably precise. See Theorem \ref{symmetric_general_dimension_bound}.

Our original motivation for undertaking investigations of this nature is as follows.
Suppose that $|\rank(\M)|=1$, and let $m$ be the rank of all non-zero elements of $\M$. We say  that $\M$ is a constant rank $m$ subspace of $\Bil(V)$.

It is conjectured that the dimension of a constant rank $m$ subspace is at most $n$, provided that $|K|\geq m+1$. For many fields, including all finite fields, there are constant rank $m$ subspaces of dimension $n$, when $1\leq m\leq n$, and thus the conjectured upper bound is optimal if it is proved to hold.
This dimension bound is trivial to prove for all fields if $m=n$
and thus interest is concentrated on the case when
$m<n$. 

The dimension bound is a consequence of Theorem 7 for a constant rank $m$ subspace of $\Symm(V)$ when $K$ has characteristic different from 2 and $|K|\geq m+1$. We also prove the upper bound for finite fields
of size at least $m+1$ (Theorem \ref{finite_constant_rank}). We  already proved an equivalent theorem in \cite{G1}. 

We supplement this dimension bound with the following additional information when the maximum dimension $n$ occurs. Let $\M$ be an $n$-dimensional constant rank $m$ subspace of $\Bil(V)$ and let $K=\mathbb{F}_q$, where  $q\geq m+1$. Then if $n\geq 2m+1$, all elements of $\M^\times$ either have the same left radical or the same right radical.
See Theorem \ref{equality_of_left_radicals}. Some condition on the size of $m$ relative to $n$ is necessary for the truth of this theorem.

Concerning the general case of the constant rank dimension bound, we prove the following results. Let $\M$ be a constant rank $m$ subspace of $\Bil(V)$.  Then if $|K|\geq m+1$, we have $\dim \M\leq \max\,(n,2m-1)$. See Theorem \ref{bilinear_constant_rank_bound}. If we assume that $\M\leq \Alt(V)$, this bound can be improved to $\dim \M\leq \max\,(n-1,2m-1)$.
See Theorem \ref{alternating_constant_rank_bound}.

We have remarked above that if $\M$ is a constant rank $m$ subspace of $\Bil(V)$, we have
$\dim \M\leq n$ if we work over a sufficiently large finite field, and this upper bound is optimal. For constant rank subspaces of $\Alt(V)$ and $\Symm(V)$
in the finite field case, there is reason to suppose that the upper bound  of $n$ for the dimension can often be improved. 

To illustrate this point, let $\M$ be a constant rank $m$ subspace of $\Alt(V)$
and $K=\mathbb{F}_q$,  where $q\geq m+1$. Then if
$4\leq m\leq
\lfloor n/2\rfloor$, we have $\dim \M\leq n-2$
(Theorem \ref{new_alternating_dimension_bound}).
Similarly, 
let $\M$  be a constant rank $m$ subspace of $\Symm(V)$ and $K=\mathbb{F}_q$, where
$q$ is odd and at least $m+1$. Then  if $m\leq 
2n/3$, we have $\dim \M\leq n-1$ (Theorem \ref{the_main_dimension_bound}). 

Additional results can  also be obtained for certain constant rank subspaces of
$\Alt(V)$ in the finite field case. By way of example, let $\M$ be an $n$-dimensional constant rank $m$ subspace of
$\Alt(V)$ and let  $K=\mathbb{F}_q$. Then if $q\geq m+1$, $n-m$ divides $n$. This follows from the fact that the different subspaces of
dimension $n-m$ of $V$ that occur as the radicals of the elements of $\M^\times$ form a spread of $V$ (Theorem \ref{spread_theorem}). We also show that such $n$-dimensional
constant rank $m$ subspaces of $\Alt(V)$ exist if $n$ is odd and $n-m$ divides $n$.

\section{Basic theorems for studying bilinear forms}

\noindent The following is fundamental to all the results we obtain in this paper.

\begin{theorem} \label{left_radical_right_radical}

Let $\M$ be a subspace of $\Bil(V)$ and let $m$ be the largest integer in $\rank(\M)$. Let $f$ be an element of $\M$ with $\rank f=m$. Let $u$, $w$ be arbitrary elements
of $\rad_L f$, $\rad_R f$, respectively. Then if $|K|\geq m+1$, we have 
\[
g(u,w)=0
\]
for all elements $g$ of $\M$. 

\end{theorem}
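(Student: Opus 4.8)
The plan is to fix $u\in\rad_L f$ and $w\in\rad_R f$ and an arbitrary $g\in\M$, and to encode the desired identity $g(u,w)=0$ in a single homogeneous determinant attached to the pencil $sf+tg$. Since $\rank f=m$, I first choose vectors $x_1,\dots,x_m$ and $y_1,\dots,y_m$ in $V$ for which the $m\times m$ matrix $F_0=[f(x_i,y_j)]$ is nonsingular. For indeterminates $s,t$ I then form the $(m+1)\times(m+1)$ matrix
\[
\widetilde M(s,t)=\bigl[(sf+tg)(p_i,q_j)\bigr],\qquad p=(x_1,\dots,x_m,u),\ \ q=(y_1,\dots,y_m,w),
\]
whose entries are linear forms in $s$ and $t$, so that $H(s,t):=\det\widetilde M(s,t)$ is homogeneous of degree $m+1$.

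The crucial point is that $H$ vanishes at every point of $K^2$. Indeed, for each $(s,t)$ the form $sf+tg$ lies in the subspace $\M$, so it is either zero or of rank at most $m$, because $m$ is the largest integer in $\rank(\M)$; in either case a bilinear form of rank at most $m$ has a matrix of values on $p_1,\dots,p_{m+1}$ against $q_1,\dots,q_{m+1}$ of rank at most $m$, whence $H(s,t)=0$. Viewing $H$ on the projective line, it therefore vanishes at all $|K|+1$ points of $\mathbb P^1(K)$. But a non-zero homogeneous polynomial of degree $m+1$ in two variables is a product of at most $m+1$ linear forms over $\overline K$ and so has at most $m+1$ zeros in $\mathbb P^1(K)$, whereas $|K|+1\ge m+2$ under the hypothesis $|K|\ge m+1$; hence $H$ must be the zero polynomial. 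This projective count supplies exactly the one extra point that a purely affine argument is missing, and I expect this --- pushing the field bound down to the sharp value $|K|\ge m+1$ rather than the $|K|\ge m+2$ that the naive degree count would demand --- to be the only real obstacle.

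It then remains to read $g(u,w)$ off a single coefficient of $H$. Because $u\in\rad_L f$ annihilates the $f$-entries in the last row and $w\in\rad_R f$ annihilates the $f$-entries in the last column, $\widetilde M(s,t)$ has the block form $\left(\begin{smallmatrix} sF_0+tG_0 & tb\\ ta^{\top} & t\,g(u,w)\end{smallmatrix}\right)$, with $G_0=[g(x_i,y_j)]$, $b=(g(x_i,w))_i$ and $a=(g(u,y_j))_j$. The monomial $s^m t$ can arise in the determinant only by taking the $sF_0$ contribution from the first $m$ rows and columns together with the corner entry $t\,g(u,w)$, so its coefficient in $H$ equals $\pm\,g(u,w)\det F_0$. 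Since $H\equiv 0$ this coefficient is zero, and $\det F_0\neq 0$ forces $g(u,w)=0$, as required. The cases $u=0$ or $w=0$, and the case $m=n$ in which both radicals vanish, are trivial.
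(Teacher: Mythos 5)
Your proof is correct, and it is genuinely more self-contained than the paper's. The paper proceeds by first choosing an automorphism $\sigma$ of $V$ with $\sigma(\rad_L f)=\rad_R f$, replacing each $g\in\M$ by $g_\sigma(x,y)=g(x,\sigma y)$ so that $f_\sigma$ has equal left and right radicals, writing matrices in block form relative to that radical and a complement, and then invoking the proof of Theorem 1 of \cite{G2} for the key vanishing of the block $A_1$ when $|K|\geq m+1$; that is, the decisive polynomial step is outsourced to a citation. You avoid both the automorphism reduction and the citation: your $(m+1)\times(m+1)$ pencil determinant $H(s,t)$ is homogeneous of degree $m+1$, vanishes at every point of $K^2$ because each member $sf+tg$ of the pencil lies in $\M$ and so has rank at most $m$ (and a value matrix of a rank-$\leq m$ form on any $m+1$ test vectors has rank at most $m$, since it factors through the Gram matrix), and is therefore identically zero by the count of zeros on $\mathbb{P}^1(K)$. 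Your observation that the point at infinity supplies one extra root is exactly what achieves the sharp bound $|K|\geq m+1$: affine counting alone would need $|K|\geq m+2$, while $H(0,1)=0$ kills the top coefficient of $H(1,t)$ and reduces the degree by one. The coefficient extraction is also sound: because $u\in\rad_L f$ and $w\in\rad_R f$ wipe out the $f$-contributions in the border, any permutation term passing through the off-corner border entries carries $t^2$, so the coefficient of $s^m t$ is exactly $g(u,w)\det F_0$ with $\det F_0\neq 0$, forcing $g(u,w)=0$; and $F_0$ exists since the Gram matrix of $f$ has a nonsingular $m\times m$ submatrix. The trade-off: granting the reference, the paper's route is shorter on the page and reuses existing machinery, whereas yours proves the genuinely two-sided bilinear statement in one stroke, with the mechanism behind the hypothesis $|K|\geq m+1$ visible rather than hidden in the cited proof.
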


\begin{proof}
We set 
$U=\rad_L f$ and $W=\rad_R f$. These are both subspaces of dimension $n-m$.
It follows that there is an automorphism, $\sigma$, say, of $V$ with $\sigma(U)=W$.
For each element $g$ of $\M$, we define $g_\sigma$ in $\Bil(V)$ by setting
\[
g_\sigma(x,y)=g(x, \sigma y)
\]
for all $x$ and $y$ in $V$. Clearly, since $\sigma$ is an automorphism of $V$, $g_\sigma$ has the same rank as $g$. 

Thus $f_\sigma$ has rank $m$ and its left radical is $U$. The right radical of $f_\sigma$ consists of those elements $y$ such that $\sigma y\in W$. Thus $y\in \sigma^{-1}(W)=U$. We see therefore that $U$ is both the left and right radical of $f_\sigma$.

Let $U'$ be a complement for $U$ in $V$. With respect to a basis of $V$ consisting of bases of $U$ and $U'$, we can take the matrix of $f_\sigma$ to be 
\[
C=\left(
\begin{array}
{cc}
        0&0\\
        0&A   
\end{array}
\right),
\]
where $A$ is an invertible $m\times m$ matrix.

Given $g$ in $\M$, let the matrix of $g_\sigma$ with respect to the same basis be 
\[
D=\left(
\begin{array}
{cc}
        A_1&A_2\\
        A_3&A_4   
\end{array}
\right),
\]
where $A_1$ is an $(n-m)\times (n-m)$  matrix,  $A_4$ is an $m\times m$ matrix, 
and $A_2$, $A_3$ are matrices of the appropriate compatible sizes. 

When we follow the proof of Theorem 1 of \cite{G2}, we find that $A_1=0$ if 
$|K|\geq m+1$. Now the fact that $A_1=0$ means that 
\[
g_\sigma(x,y)=0
\]
for all $x$ and $y$ in $U$. It follows that
\[
g(x,\sigma y)=0
\]
for all $x$ and $y$ in $U$. However, since $\sigma(U)=W$, we obtain
\[
g(u,w)=0
\]
for all $u$ in $U$ and all $w$ in $W$.
\end{proof}

Next, we introduce a simple idea which is useful for induction arguments  that establish a relationship between $\dim \M$ and $|\rank(\M)|$. 

Let $V^*$ denote the dual space of $V$ and let $u$ be any vector in $V$.
We define a linear transformation $\epsilon_u:\M\to V^*$ by setting
\[
\epsilon_u(f)(v)=f(u,v)
\]
for all $f\in \M$ and all $v\in V$. Likewise, we define a linear transformation $\eta_u:\M\to V^*$ by setting
\[
\eta_u(f)(v)=f(v,u)
\]
for all $f\in \M$ and all $v\in V$.

Let $\M_u^L$ denote the kernel of $\epsilon_u$. This is the subspace of $\M$ consisting of all those forms $f$ such that $u\in \rad_L f$. Similarly, 
let $\M_u^R$ denote the kernel of $\eta_u$. This is the subspace of $\M$ consisting of all those forms $f$ such that $u\in \rad_R f$.

When $\M$ is a subspace of either $\Alt(V)$ or $\Symm(V)$, clearly $\M_u^L=\M_u^R$. In this case, we write $\M_u$ in place of $\M_u^L$. 

The following estimate for $\dim \M_u^L$ and $\dim \M_u^R$ follows from the fact that $\dim V^*=n$. See, for example, Lemma 2 of \cite{G4}.

\begin{lemma} \label{M_u_dimension_bound}

For each element $u$ of $V$, we have
\[
\dim \M_u^L\geq \dim \M-n, \quad \dim \M_u^R\geq \dim \M-n.
\]
\end{lemma}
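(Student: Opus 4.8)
The plan is to reduce this entirely to the rank–nullity theorem applied to the two linear maps $\epsilon_u$ and $\eta_u$ that were just introduced. The point is that $\M_u^L$ and $\M_u^R$ are defined precisely as the kernels of these maps, and both maps have codomain $V^*$, whose dimension is $n$. So the estimate should fall out of the observation that the image of a linear map cannot have dimension larger than the target space.

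Concretely, I would first record that $\epsilon_u \colon \M \to V^*$ is indeed $K$-linear: for fixed $u$, the assignment $f \mapsto \epsilon_u(f)$ is linear because $f \mapsto f(u,v)$ is linear in $f$ for each fixed $v \in V$, so $\epsilon_u(af + bg)(v) = a f(u,v) + b g(u,v)$ as required. Applying rank–nullity to $\epsilon_u$ then gives
\[
\dim \M = \dim \ker \epsilon_u + \dim \epsilon_u(\M) = \dim \M_u^L + \dim \epsilon_u(\M).
\]
Since $\epsilon_u(\M)$ is a subspace of $V^*$ and $\dim V^* = \dim V = n$, we have $\dim \epsilon_u(\M) \leq n$, and substituting this bound yields $\dim \M \leq \dim \M_u^L + n$, which rearranges to the first claimed inequality $\dim \M_u^L \geq \dim \M - n$.

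The second inequality is obtained by repeating the identical argument verbatim with $\eta_u$ in place of $\epsilon_u$, using that $\M_u^R = \ker \eta_u$ and that $\eta_u(\M)$ is again a subspace of the $n$-dimensional space $V^*$. I do not anticipate any genuine obstacle: the only things requiring care are the (immediate) linearity of $\epsilon_u$ and $\eta_u$ and the fact that $\dim V^* = n$, after which the bound is a one-line consequence of rank–nullity. This matches the remark in the text that the estimate ``follows from the fact that $\dim V^* = n$.''
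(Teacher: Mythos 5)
Your proof is correct and is exactly the argument the paper intends: it sketches the same rank--nullity reduction, noting the estimate ``follows from the fact that $\dim V^*=n$'' with the kernels of $\epsilon_u$ and $\eta_u$ being $\M_u^L$ and $\M_u^R$. Your write-up simply fills in the routine linearity check and the one-line bound $\dim \epsilon_u(\M)\leq n$, so there is nothing to add.
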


The following more precise estimate for both $\dim \M_u^L$ and $\dim \M_u^R$ is also important throughout this paper. Its proof relies on Theorem \ref{left_radical_right_radical}.

\begin{lemma} \label{improved_M_u_dimension_bound}
Let $\M$ be a non-zero subspace of $\Bil(V)$ and let $m$ be the largest integer in $\rank(\M)$. Let $u$ be an element of $V$.
Suppose that $\M_u^L$ contains an element of rank $m$. Then if $|K|\geq m+1$, we have 
\[
\dim \M_u^L\geq \dim \M-m.
\]
Furthermore, if $\dim \M_u^L=\dim \M-m$, then all elements of rank $m$ in $\M_u^L$ have the same right radical.
Similarly, if  $\M_u^R$ contains an element of rank $m$ and if $|K|\geq m+1$,
\[
\dim \M_u^R\geq \dim \M-m.
\]
The equality $\dim \M_u^R=\dim \M-m$ implies that all elements of rank $m$ in
$\M_u^R$ have the same left radical.
\end{lemma}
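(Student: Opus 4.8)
The plan is to sharpen the crude estimate $\dim \M_u^L \geq \dim \M - n$ of Lemma \ref{M_u_dimension_bound}, which merely uses $\dim V^* = n$, by showing that the image of the linear map $\epsilon_u : \M \to V^*$ is in fact confined to a subspace of $V^*$ of dimension at most $m$. Theorem \ref{left_radical_right_radical} is exactly the tool that produces this confinement, and identifying the relevant annihilator is the crux of the argument; once that is in place the bound is just rank--nullity.

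First I would fix an element $f$ of $\M_u^L$ with $\rank f = m$, which exists by hypothesis. Since $f \in \M_u^L$ we have $u \in \rad_L f$, and since $m$ is the largest integer in $\rank(\M)$ the form $f$ has maximal rank, so Theorem \ref{left_radical_right_radical} applies (using $|K| \geq m+1$). Writing $W = \rad_R f$, a subspace of dimension $n-m$, the theorem yields $g(u,w) = 0$ for every $g \in \M$ and every $w \in W$. This says precisely that $\epsilon_u(g)$ annihilates $W$ for all $g$, so $\epsilon_u(\M)$ is contained in the annihilator $W^{\circ} \subseteq V^*$. Because $\dim W^{\circ} = n - \dim W = m$, the rank of $\epsilon_u$ is at most $m$, and rank--nullity gives $\dim \M_u^L = \dim \ker \epsilon_u \geq \dim \M - m$.

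For the equality statement, suppose $\dim \M_u^L = \dim \M - m$; equivalently $\epsilon_u$ has rank exactly $m$, so the inclusion $\epsilon_u(\M) \subseteq W^{\circ}$ must be an equality $\epsilon_u(\M) = W^{\circ}$. Now let $f'$ be an arbitrary rank-$m$ element of $\M_u^L$ and set $W' = \rad_R f'$. Repeating the argument with $f'$ in place of $f$ shows $\epsilon_u(\M) \subseteq (W')^{\circ}$, hence $W^{\circ} \subseteq (W')^{\circ}$; as both annihilators have dimension $m$ they coincide, and taking annihilators once more gives $W = W'$. Thus every rank-$m$ element of $\M_u^L$ has the same right radical $W$. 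The assertions for $\M_u^R$ follow by the identical argument with $\eta_u$ in place of $\epsilon_u$ and left and right radicals interchanged. The only step requiring genuine care is the appeal to Theorem \ref{left_radical_right_radical}, which pins $\epsilon_u(\M)$ inside the $m$-dimensional space $W^{\circ}$; the remainder is a dimension count.
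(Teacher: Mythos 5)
Your proposal is correct and follows essentially the same route as the paper's own proof: both arguments use Theorem \ref{left_radical_right_radical} to confine $\epsilon_u(\M)$ inside the $m$-dimensional annihilator of $\rad_R f$ in $V^*$, conclude by rank--nullity, and handle the equality case by observing that $\epsilon_u(\M)$ must then coincide with the annihilator of the right radical of every rank-$m$ element of $\M_u^L$. Your explicit double-annihilator step ($W^{\circ}=(W')^{\circ}$ implies $W=W'$) merely spells out what the paper asserts as clear, so there is nothing to add.
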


\begin{proof}
Suppose that $\M_u^L$ contains an element $f$, say, of rank $m$. Then provided that
$|K|\geq m+1$, Theorem \ref{left_radical_right_radical} implies that as $u\in \rad_L
f$, 
\[
g(u,w)=0
\]
for all $w$ in $\rad_R f$ and all $g$ in $\M$. We deduce that $\epsilon_u(\M)$ is contained in the annihilator of $\rad_R f$ in $V^*$. Now since $\rad_R f$ has dimension $n-m$, its annihilator in $V^*$ has dimension $m$. This establishes that $\dim \epsilon_u(\M)\leq m$. It is also clear that if $\dim \epsilon_u(\M)=m$, $\epsilon_u(\M)$ is  precisely the annihilator 
of $\rad_R g$ for each element $g$ of rank $m$ in $\M_u^L$, and hence these right radicals are all the same.

When we recall that 
\[
\dim \M=\dim  \epsilon_u(\M)+\dim \M_u^L,
\]
we obtain the desired inequality $\dim \M_u^L\geq \dim \M-m$.

An identical proof serves to estimate $\dim \M_u^R$ under the stated hypotheses and to identify left radicals when equality holds.
\end{proof}

\section{Constant rank subspaces in finite field case}

\noindent The following lemma is our main tool to investigate constant rank subspaces of bilinear forms over finite fields.

\begin{lemma} \label{constant_rank_dimension_lemma}

Let $\M$ be a $d$-dimensional constant rank $m$ subspace of $\Bil(V)$ and let
$K=\mathbb{F}_q$. Then we have
\[
(q^d-1)(q^{n-m}-1)=\sum_{u\neq 0} (q^{d(u)}-1),
\]
where the sum extends over all non-zero vectors $u$  and $d(u)=\dim \M_u^L$.
\end{lemma}

\begin{proof}
Let $\Omega$ be the set of pairs $(f,u)$, where $f$ is an element of $\M^\times$, and $u$ is an element of $(\rad_L f)^\times$.  
We first evaluate $|\Omega|$ by fixing $f$ and counting those $u\neq 0$ in $\rad_L f$. We obtain
\[
|\Omega|=(q^d-1)(q^{n-m}-1).
\]

Next we evaluate $|\Omega|$ by fixing a non-zero $u$ and counting those $f$ with
$u\in \rad_L f$. The required  $f$  are the non-zero elements of $\M_u^L$. Thus, summing over all non-zero $u$, we derive the equality
\[
|\Omega|=\sum_{u\neq 0} (q^{d(u)}-1).
\]
This proves what we want.  
\end{proof}

This lemma enables us to prove that a constant rank subspace of $\Bil(V)$ has dimension at most $n$, provided that we work over a sufficiently large finite field.

\begin{theorem} \label{finite_constant_rank}

Let $\M$ be a constant rank $m$ subspace of $\Bil(V)$ and let
$K=\mathbb{F}_q$, where $q\geq m+1$. Then we have $\dim \M\leq n$. (When $m=n$, the result is true for all fields $K$.)
\end{theorem}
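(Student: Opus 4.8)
The plan is to argue by contradiction, combining the counting identity of Lemma~\ref{constant_rank_dimension_lemma} with the two dimension estimates for $\M_u^L$. Write $d=\dim\M$ and $d(u)=\dim\M_u^L$, and suppose, contrary to the claim, that $d\geq n+1$.

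The first step exploits the crude bound. By Lemma~\ref{M_u_dimension_bound} we have $d(u)\geq d-n\geq 1$ for \emph{every} non-zero $u$, so each subspace $\M_u^L$ is non-zero. Since $\M$ has constant rank $m$, any non-zero element of $\M_u^L$ has rank exactly $m$, and hence $\M_u^L$ contains an element of rank $m$. Because $q\geq m+1$, Lemma~\ref{improved_M_u_dimension_bound} now applies to every non-zero $u$ and upgrades the estimate to
\[
d(u)\geq d-m \quad\text{for all } u\neq 0.
\]

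The second step feeds this uniform bound into the identity of Lemma~\ref{constant_rank_dimension_lemma}. There are $q^n-1$ non-zero vectors, and each contributes a term $q^{d(u)}-1\geq q^{d-m}-1\geq 0$, so
\[
(q^d-1)(q^{n-m}-1)=\sum_{u\neq 0}\bigl(q^{d(u)}-1\bigr)\geq (q^n-1)(q^{d-m}-1).
\]
To finish I would simply expand both outer products: the difference
\[
(q^d-1)(q^{n-m}-1)-(q^n-1)(q^{d-m}-1)=(q^m-1)\bigl(q^{n-m}-q^{d-m}\bigr)
\]
is strictly negative whenever $d>n$, since then $q^{d-m}>q^{n-m}$. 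This contradicts the displayed inequality, and therefore $d\leq n$.

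I expect the only point requiring genuine care to be the first step: one must notice that the assumption $d>n$ is precisely what makes the weak estimate $d(u)\geq d-n$ force $\M_u^L\neq 0$ for \emph{all} $u$ simultaneously, which is what licenses the uniform application of Lemma~\ref{improved_M_u_dimension_bound}. Once every term of the sum is bounded below by $q^{d-m}-1$, the rest is the routine algebraic identity above. The excluded case $m=n$ is even easier, since then all radicals are trivial and $d(u)=0$ for every $u\neq 0$, so $d\geq n+1$ already contradicts $d(u)\geq d-n\geq 1$.
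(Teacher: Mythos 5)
Your proof is correct and follows essentially the same route as the paper: assume $\dim\M\geq n+1$, use Lemma~\ref{M_u_dimension_bound} to force $\M_u^L\neq 0$ for every $u\neq 0$, upgrade to $d(u)\geq\dim\M-m$ via Lemma~\ref{improved_M_u_dimension_bound}, and feed this into the double-counting identity of Lemma~\ref{constant_rank_dimension_lemma}. The only divergence is the closing arithmetic --- the paper first normalizes to $\dim\M=n+1$ and compares $q$-adic valuations of the two sides, whereas you keep $d$ general and close with the valid factorization $(q^d-1)(q^{n-m}-1)-(q^n-1)(q^{d-m}-1)=(q^m-1)\bigl(q^{n-m}-q^{d-m}\bigr)<0$ --- a cosmetic variation that works equally well (and your direct disposal of the $m=n$ case over arbitrary fields is also sound).
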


\begin{proof}

Suppose  if possible that $\dim \M>n$. Then we may as well assume that $\dim \M=n+1$ and proceed to derive a contradiction. Given $u\in V$, we set $d(u)=\dim \M_u^L$. Since we are assuming that $\dim \M>n$, Lemma \ref{M_u_dimension_bound} implies that
$d(u)\geq 1$. We deduce from Lemma \ref{improved_M_u_dimension_bound} that, since $\M$ is a constant rank $m$ subspace, 
\[
d(u)\geq n+1-m.
\]

Lemma \ref{constant_rank_dimension_lemma} shows that 
\[
\sum_{u\neq 0} (q^{d(u)}-1)=(q^{n+1}-1)(q^{n-m}-1).
\]  
On expanding each side above, we obtain
\[
(\sum_{u\neq 0} q^{d(u)})-q^n=q^{2n+1-m}-q^{n+1}-q^{n-m}.
\]
The highest power of $q$ dividing the right hand side is $q^{n-m}$. On the other hand, as $d(u)\geq n+1-m$, the highest power of $q$ dividing the left hand side is
at least $q^{n+1-m}$. This is a contradiction, and we deduce that $\dim \M\leq n$.
\end{proof}

We gave a similar proof of an equivalent theorem in \cite{G1} but have included this proof to illustrate  the ideas of bilinear form theory.

Our next objective is to investigate what happens when we have the equality $\dim \M=n$ in Theorem \ref{finite_constant_rank}. We begin by defining two relevant subspaces of $V$. 

\begin{definition} \label{V^L_M_definition}

Let $\M$ be a subspace of $\Bil(V)$. We set
\[
V(\M)^L=\{ v \in V: \M_v^L\neq 0\} \mbox{ and } V(\M)^R=\{v \in V: \M_v^R\neq 0\}.
\]

\end{definition}

\begin{lemma} \label{V(M)_is_a_subspace}

Let $\M$ be a constant rank $m$ subspace of $\Bil(V)$,  with $\dim \M\geq 2m+1$. Then, if $|K|\geq m+1$,  $V(\M)^L$ and $V(\M)^R$ are both subspaces of $V$.

\end{lemma}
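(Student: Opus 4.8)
The plan is to verify directly that $V(\M)^L$ is closed under scalar multiplication and under addition; the reasoning for $V(\M)^R$ is word-for-word the same with left and right interchanged. The set plainly contains $0$, since every nonzero form has $0$ in its left radical, so $\M_0^L=\M\neq 0$. Closure under scalars is immediate: for $v\in V(\M)^L$ and $c\neq 0$, a vector lies in $\rad_L f$ if and only if every nonzero scalar multiple of it does, so $\M_{cv}^L=\M_v^L\neq 0$, while $c=0$ gives $cv=0\in V(\M)^L$. Thus the entire content of the lemma is additive closure.

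For additivity, suppose $u,v\in V(\M)^L$, so that $\M_u^L$ and $\M_v^L$ are both nonzero. Because $\M$ is a constant rank $m$ subspace, every nonzero element of $\M_u^L$ has rank $m$; in particular $\M_u^L$ contains an element of rank $m$. Hence, since $|K|\geq m+1$, Lemma \ref{improved_M_u_dimension_bound} applies and gives $\dim \M_u^L\geq \dim \M-m$, and likewise $\dim \M_v^L\geq \dim \M-m$.

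Now I would intersect these two subspaces inside $\M$. Since both $\M_u^L$ and $\M_v^L$ lie in $\M$, the Grassmann dimension formula yields
\[
\dim(\M_u^L\cap \M_v^L)\geq \dim \M_u^L+\dim \M_v^L-\dim \M\geq 2(\dim \M-m)-\dim \M=\dim \M-2m.
\]
This is precisely where the hypothesis $\dim \M\geq 2m+1$ enters: it forces $\dim \M-2m\geq 1$, so the intersection contains a nonzero form $h$. For this $h$ we have $u\in \rad_L h$ and $v\in \rad_L h$, and since the left radical is a subspace of $V$ it follows that $u+v\in \rad_L h$. Therefore $h\in \M_{u+v}^L$, so $\M_{u+v}^L\neq 0$ and $u+v\in V(\M)^L$, as required.

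The argument is short and I do not anticipate a serious obstacle. The one idea that makes it work is to establish $u+v\in V(\M)^L$ not by constructing a form that annihilates $u+v$ on the left directly, but by producing a single form whose left radical contains \emph{both} $u$ and $v$ simultaneously; the constant rank hypothesis, through Lemma \ref{improved_M_u_dimension_bound}, keeps $\M_u^L$ and $\M_v^L$ large, and the bound $\dim \M\geq 2m+1$ is exactly what guarantees these large subspaces meet nontrivially. The only point to keep straight is that Lemma \ref{improved_M_u_dimension_bound} requires an element of rank $m$ in each of $\M_u^L$ and $\M_v^L$, which is automatic here since $\M$ has constant rank $m$ and these subspaces are nonzero.
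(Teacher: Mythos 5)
Your proof is correct and takes essentially the same approach as the paper's: both derive $\dim \M_u^L,\dim \M_v^L\geq \dim \M-m$ from Lemma \ref{improved_M_u_dimension_bound} (with the constant rank hypothesis supplying the required rank-$m$ element) and then use the dimension formula together with $\dim \M\geq 2m+1$ to find a single nonzero form whose left radical contains both vectors. The only difference is cosmetic, namely that you spell out the trivial scalar-closure step which the paper dispatches in one line.
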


\begin{proof}
Let $u$, $w$ be  elements of $V(\M)^L$.  We wish to show that $\M_{u+w}^L\neq 0$.
Then, since $V(\M)^L$ is clearly closed
 under scalar multiplication, it will follow that  $V(\M)^L$ is a subspace.
 
 We claim that if we can show that $\M_u^L\cap \M_w^L\neq 0$,  this will establish that
 $\M_{u+w}^L\neq 0$. For suppose that $g$ is a non-zero element of $\M_u^L\cap \M_w^L$. Then  $u$ and $w$ are contained in $\rad_L g$ and hence $u+w\in \rad_L g$. This implies that $\M_{u+w}^L\neq 0$, as required.

We turn therefore to proving that $\M_u^L\cap \M_w^L\neq 0$.
Lemma \ref{improved_M_u_dimension_bound} shows that
both $\M_u^L$ and $\M_w^L$ have dimension at least $\dim \M-m$. In addition, we have the inequality
\[
\dim(\M_u^L+\M_w^L)\leq \dim \M
\]
and hence
\[
\dim \M_u^L+\dim \M_w^L
-\dim (\M_u^L\cap \M_w^L)\leq \dim \M.
\]

Given the earlier inequalities for $\dim \M_u^L$ and $\dim \M_w^L$, we deduce that
\[
2(\dim \M-m)-\dim (\M_u^L\cap \M_w^L) \leq \dim \M
\]
and hence
\[
\dim (\M_u^L\cap \M_w^L)\geq \dim \M-2m.
\]
Since we are assuming that  $\dim \M\geq 2m+1$, we see that $\dim (\M_u^L\cap \M_w^L)\geq 1$, and this completes the proof that $V(\M)^L$ is a subspace. The proof that
$V(\M)^R$ is also a subspace is identical, since the same inequalities hold.
\end{proof}

\begin{lemma} \label{left_orthogonal_right_orthogonal}

Let $\M$ be a constant rank $m$ subspace of $\Bil(V)$, with $\dim \M\geq 2m+1$. Then,  if $|K|\geq m+1$, we have 
\[
f(u,w)=0
\]
for all  $u\in V(\M)^L$, all $w\in V(\M)^R$, and all $f\in \M$.

\end{lemma}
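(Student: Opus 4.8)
The plan is to reduce the desired conclusion to Theorem \ref{left_radical_right_radical} by finding, for each pair $u\in V(\M)^L$ and $w\in V(\M)^R$, a single form in $\M$ of full rank $m$ whose left radical contains $u$ and whose right radical contains $w$. Once such a form is produced, Theorem \ref{left_radical_right_radical} immediately gives $g(u,w)=0$ for all $g\in \M$, which is exactly what we want. So the whole argument hinges on locating a rank-$m$ element with the prescribed one-sided radical memberships.

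First I would fix $u\in V(\M)^L$ and $w\in V(\M)^R$. By the definitions in Definition \ref{V^L_M_definition}, the subspaces $\M_u^L$ and $\M_w^R$ are both non-zero. The key dimension input comes from Lemma \ref{improved_M_u_dimension_bound}: since $\M$ is a constant rank $m$ space, every non-zero element of $\M$ has rank $m$, so in particular $\M_u^L$ and $\M_w^R$ each contain an element of rank $m$, and the lemma gives
\[
\dim \M_u^L\geq \dim \M-m,\qquad \dim \M_w^R\geq \dim \M-m.
\]
Then I would intersect these two subspaces and run the same inclusion–exclusion estimate used in Lemma \ref{V(M)_is_a_subspace}, namely
\[
\dim(\M_u^L\cap \M_w^R)\geq \dim \M_u^L+\dim \M_w^R-\dim \M\geq \dim \M-2m.
\]
Under the hypothesis $\dim \M\geq 2m+1$ this forces $\dim(\M_u^L\cap \M_w^R)\geq 1$, so the intersection contains a non-zero form $f$.

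Now every non-zero element of $\M$ has rank exactly $m$, so this $f$ has rank $m$; moreover $f\in \M_u^L$ means $u\in \rad_L f$, and $f\in \M_w^R$ means $w\in \rad_R f$. Thus $f$ is precisely a rank-$m$ form with $u\in \rad_L f$ and $w\in \rad_R f$. Applying Theorem \ref{left_radical_right_radical} to this $f$, with $u$ in its left radical and $w$ in its right radical, yields $g(u,w)=0$ for all $g\in \M$; taking $g=f$ gives in particular $f(u,w)=0$, but the stronger conclusion that it holds for all forms is exactly the statement of the lemma. Since $u\in V(\M)^L$ and $w\in V(\M)^R$ were arbitrary, this completes the argument.

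The main obstacle, and the only place the constant-rank hypothesis is genuinely exploited, is guaranteeing that the intersection $\M_u^L\cap \M_w^R$ contains a \emph{rank-}$m$ element rather than merely a non-zero one. In a general subspace a non-zero form could have smaller rank, which would block the application of Theorem \ref{left_radical_right_radical}; here the constant-rank assumption makes every non-zero element automatically have rank $m$, so this difficulty evaporates. The bookkeeping to watch is the direction of the radicals: one must pair the left-radical condition on $u$ with the right-radical condition on $w$ (hence intersect $\M_u^L$ with $\M_w^R$, not $\M_w^L$), which matches the asymmetric hypotheses $u\in V(\M)^L$, $w\in V(\M)^R$ of the lemma.
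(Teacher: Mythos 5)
Your proposal is correct and follows exactly the paper's own argument: apply Lemma \ref{improved_M_u_dimension_bound} to get $\dim \M_u^L,\dim \M_w^R\geq \dim \M-m$, run the inclusion--exclusion estimate from Lemma \ref{V(M)_is_a_subspace} with $\dim \M\geq 2m+1$ to find a non-zero $f\in \M_u^L\cap \M_w^R$, and invoke Theorem \ref{left_radical_right_radical}. Your explicit remark that the constant-rank hypothesis guarantees the intersection element has rank $m$ (so the theorem applies) is a point the paper leaves implicit, but the route is identical.
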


\begin{proof}

Let $u$ and $w$ be elements in $V(\M)^L$, $V(\M)^R$, respectively. Then we have
$\dim \M_u^L\geq \dim \M-m$, $\dim \M_w^R\geq \dim \M-m$ by Lemma \ref{improved_M_u_dimension_bound}. The dimension argument used in Lemma
\ref{V(M)_is_a_subspace} shows that $\M_u^L\cap \M_w^R\neq 0$, since we are assuming that $\dim \M\geq 2m+1$. 

Let $g$ be a non-zero element in $\M_u^L\cap \M_w^R$. Then $u$ is in $\rad_L g$,
$w$ is in $\rad_R g$ and hence Theorem \ref{left_radical_right_radical} implies that
\[
f(u,w)=0
\]
for all $f$ in $\M$. This proves the lemma.
\end{proof}

\begin{lemma} \label{dimension_estimate_in_terms_of_V}

Let $\M$ be a constant rank $m$ subspace of $\Bil(V)$,  with $\dim \M\geq 2m+1$. Then, if $|K|\geq m+1$, we have 
\[
\dim \M_u^L\geq \dim \M-n+\dim V(\M)^R
\]
for all  $u\in V(\M)^L$.

\end{lemma}

\begin{proof}
Lemma \ref{left_orthogonal_right_orthogonal} implies that $\epsilon_u(\M)$ annihilates $V(\M)^R$. Thus $\epsilon_u(\M)$ is contained in the annihilator of 
$V(\M)^R$ in $V^*$. We deduce that
\[
\dim \epsilon_u(\M)\leq n-\dim V(\M)^R.
\]
Since $\dim \epsilon_u(\M)=\dim \M-\dim \M_u$, the inequality follows.
\end{proof}

It is clear that if $\M$ is a subspace of $\Bil(V)$, $V(\M)^L$ is the union of the left radicals of the elements of $\M$, and similarly $V(\M)^R$ is the union of the right radicals. Thus, if $\M$ is a constant rank $m$ subspace of $\Bil(V)$ and if $V(\M)^L$ is a subspace of $V$, certainly $\dim V(\M)^L\geq n-m$, and $\dim V(\M)^L= n-m$ if and only if all elements of $\M^\times$ have the same left radical. Similarly, if $V(\M)^R$ is a subspace, its dimension is at least $n-m$ and it equals $n-m$ if and only if all elements of $\M^\times$ have the same right radical. 

We proceed now to prove a theorem about the equality of left
radicals or of right radicals for constant rank $m$ subspaces of maximum dimension $n$ provided we assume that $n\geq 2m+1$ and we work over sufficiently large finite
fields.

\begin{theorem} \label{equality_of_left_radicals}

Let $\M$ be an $n$-dimensional constant rank $m$ subspace of $\Bil(V)$ 
and let $K=\mathbb{F}_q$, where $q\geq m+1$. Then
if $n\geq 2m+1$, the elements of $\M^\times$ either all have the same left radical or they have the same right radical.

\end{theorem}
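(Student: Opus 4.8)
The plan is to establish the stated dichotomy directly, without a contradiction: writing $a=\dim V(\M)^L$ and $b=\dim V(\M)^R$, I will show that if the elements of $\M^\times$ do not share a common left radical, then they must share a common right radical. Since $\dim\M=n\ge 2m+1$, Lemma \ref{V(M)_is_a_subspace} guarantees that $V(\M)^L$ and $V(\M)^R$ are subspaces; as each is the union of the radicals of the elements of $\M^\times$, each contains a subspace of dimension $n-m$, so $a\ge n-m$ and $b\ge n-m$, with $a=n-m$ exactly when all elements of $\M^\times$ have the same left radical and $b=n-m$ exactly when they have the same right radical. Thus I may assume $a\ge n-m+1$ and aim to prove $b=n-m$.

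The engine of the proof is the counting identity of Lemma \ref{constant_rank_dimension_lemma}, used as in the proof of Theorem \ref{finite_constant_rank} but now combined with the sharp local estimate of Lemma \ref{dimension_estimate_in_terms_of_V}. Setting $d(u)=\dim\M_u^L$ and observing that $d(u)=0$ unless $u\in V(\M)^L$, the identity of Lemma \ref{constant_rank_dimension_lemma} with $\dim\M=n$ rearranges to
\[
\sum_{u\in (V(\M)^L)^\times} q^{d(u)} = q^{2n-m}-q^{n}-q^{n-m}+q^{a}.
\]
Because $\dim\M=n\ge 2m+1$, Lemma \ref{dimension_estimate_in_terms_of_V} applies to every $u\in V(\M)^L$ and yields $d(u)\ge b$, so the left-hand side above is divisible by $q^{b}$.

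The decisive step is then a $q$-adic valuation comparison. Since I have assumed $a\ge n-m+1$, the four exponents $2n-m$, $n$, $n-m$ and $a$ on the right-hand side all exceed $n-m$ except for the single term $-q^{n-m}$; factoring out $q^{n-m}$ leaves a bracket congruent to $-1$ modulo $q$, so the exact power of $q$ dividing the right-hand side is $q^{n-m}$. Matching this against the divisibility of the left-hand side by $q^{b}$ forces $b\le n-m$, hence $b=n-m$, which is precisely the assertion that all elements of $\M^\times$ have a common right radical. The point demanding care, and the place where the hypothesis $n\ge 2m+1$ is essential, is the uniform lower bound $d(u)\ge b$: this is what upgrades the crude estimate $d(u)\ge n-m$ to something that outstrips the valuation $n-m$ of the right-hand side, and it is available only because $V(\M)^R$ is a genuine subspace so that Lemma \ref{dimension_estimate_in_terms_of_V} can be invoked. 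Once that bound is secured the valuation argument is immediate and the dichotomy follows.
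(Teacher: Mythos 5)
Your proof is correct and takes essentially the same approach as the paper: the same counting identity from Lemma \ref{constant_rank_dimension_lemma}, the same key estimate $d(u)\geq \dim \M-n+\dim V(\M)^R$ from Lemma \ref{dimension_estimate_in_terms_of_V}, and the same $q$-adic valuation comparison on $q^{2n-m}-q^n-q^{n-m}$. The only difference is organizational: the paper argues by contradiction from $\dim V(\M)^L\geq n-m+1$ and $\dim V(\M)^R\geq n-m+1$ simultaneously, whereas you run the identical computation in contrapositive form, assuming $\dim V(\M)^L\geq n-m+1$ and deducing $\dim V(\M)^R=n-m$ directly.
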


\begin{proof}
Let us assume that our assertion above about the left and right radicals is not true.
Then we have the inequalities $\dim V(\M)^L\geq n-m+1$, $\dim V(\M)^R\geq n-m+1$ by the discussion after the proof of Lemma \ref{dimension_estimate_in_terms_of_V}, and we  will show that these lead to a contradiction. 

Lemma \ref{constant_rank_dimension_lemma} shows that
\[
\sum_{u\neq 0} (q^{d(u)}-1)=(q^{n}-1)(q^{n-m}-1).
\]  
We are interested only
in those $u$ for which $d(u)>0$ and these are the elements of $V(\M)^L$. 
Let us put $\dim V(\M)^L=d$. Then
\[ 
\sum_{u\neq 0} (q^{d(u)}-1)=(\sum_{u\neq 0} q^{d(u)})-q^d+1.
\]
On expanding and rearranging, we obtain
\[
(\sum_{u\neq 0}q^{d(u)})-q^d=q^{2n-m}-q^{n}-q^{n-m}.
\]
We are assuming that $d\geq n-m+1$ and  also that $\dim V(\M)^R\geq n-m+1$. We 
then have $d(u)\geq n-m+1$ by
Lemma \ref{dimension_estimate_in_terms_of_V}.
Thus the power of $q$ dividing the left hand side above is at least $q^{n-m+1}$. However, the power of $q$ dividing the right hand side is exactly $q^{n-m}$. We have reached a contradiction, and thus either the left radicals are all equal, or the right radicals are all equal.
\end{proof}

We note that we cannot weaken the hypothesis that $\dim \M=n$ in Theorem \ref{equality_of_left_radicals}, since, for example, there exists a constant rank 2 subspace of $\Symm(V)$ of dimension $n-1$ in which all linearly independent elements have different radicals. Similarly, some restriction on the size of $m$ compared with $n$ is needed, since, as we shall see in Section 7, there exist $n$-dimensional constant rank $m$ subspaces of $\Alt(V)$, for which Theorem \ref{equality_of_left_radicals} cannot possibly be true. The simplest example occurs when $n=3$. $\Alt(V)$ is a three-dimensional constant rank 2 subspace, in which linearly independent forms have different one-dimensional radicals.

\section{Dimension bounds for subspaces of symmetric forms}

\noindent   We begin this section on symmetric  forms with an application of Theorem \ref{left_radical_right_radical}.

\begin{lemma} \label{symmetric_no_elements_of_maximum_rank}

Let $K$ be a field of characteristic different from $2$ and let $\M$ be a non-zero subspace of $\Symm(V)$.  
Let $m$ be the largest integer in
$\rank(\M)$. Then if $|K|\geq m+1$, there exists an element $u$, say, in $V$ such that $\M_u$ contains no element of rank $m$.

\end{lemma}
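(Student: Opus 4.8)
The plan is to exploit the characteristic hypothesis by choosing $u$ to be a vector on which some form does not vanish diagonally, and then to invoke Theorem~\ref{left_radical_right_radical} to rule out rank-$m$ elements of $\M_u$.

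First I would record the only place the characteristic hypothesis enters. Since $\M\neq 0$, pick any non-zero $f\in\M$. Because $f$ is symmetric and $\operatorname{char}K\neq 2$, the polarization identity $2f(v,w)=f(v+w,v+w)-f(v,v)-f(w,w)$ shows that $f$ cannot vanish on the whole diagonal: if $f(x,x)=0$ for every $x$, then $f(v,w)=0$ for all $v,w$ and $f=0$. Hence there is a vector $u\in V$ with $f(u,u)\neq 0$, and this is the $u$ I would propose.

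Next I would show that $\M_u$ contains no element of rank $m$. Suppose for contradiction that $g\in\M_u$ has rank $m$. By the definition of $\M_u$ we have $u\in\rad g=\rad_L g=\rad_R g$, the three coinciding since $g$ is symmetric. Since $m$ is the largest integer in $\rank(\M)$ and $|K|\geq m+1$, Theorem~\ref{left_radical_right_radical} applies to $g$: taking both the left-radical argument and the right-radical argument to be $u$, it yields $h(u,u)=0$ for every $h\in\M$. Specializing to $h=f$ gives $f(u,u)=0$, contradicting the choice of $u$. Therefore no element of $\M_u$ has rank $m$.

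The argument is short, so there is no serious obstacle beyond identifying the right $u$: the whole content is the observation that Theorem~\ref{left_radical_right_radical}, when its two radical arguments are set equal, forces every form in $\M$ to vanish on the diagonal direction $u$ as soon as $u$ lies in the radical of a rank-$m$ form, while the characteristic hypothesis guarantees a diagonal direction on which some form does not vanish. The assumption $\operatorname{char}K\neq 2$ is used precisely and only to produce such a $u$; in characteristic $2$ a non-zero symmetric form may be alternating, in which case this choice is unavailable, consistent with the essential role of the hypothesis.
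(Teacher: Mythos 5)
Your proposal is correct and follows essentially the same route as the paper: choose $u$ with $f(u,u)\neq 0$ for some $f\in\M$ (possible since $\operatorname{char}K\neq 2$ and the forms are symmetric), then note that a rank-$m$ element of $\M_u$ would have $u$ in its radical, so Theorem~\ref{left_radical_right_radical} with both arguments equal to $u$ would force $h(u,u)=0$ for all $h\in\M$, a contradiction. Your explicit polarization identity merely fills in a detail the paper states without proof, so the two arguments are the same in substance.
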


\begin{proof}
There exists an element $h\in \M$ and $u\in V$ with $h(u,u)\neq 0$, 
since $K$ has characteristic different from 2 and $\M\neq 0$ consists of symmetric bilinear forms.
Suppose now that $\M_u$ contains an element $f$, say, of rank $m$. Then $u\in \rad f$ and hence Theorem \ref{left_radical_right_radical} implies that, if we assume that
$|K|\geq m+1$, each element $g$ of $\M$ satisfies
\[
g(u,u)=0.
\]
This contradicts our earlier statement about the existence of $h$ in $\M$ with $h(u,u)\neq 0$.
We deduce that  $\M_u$ contains no element of rank $m$, as required.
\end{proof}

We have enough information to prove our constant rank dimension bound for subspaces of 
$\Symm(V)$.

\begin{theorem} \label{symmetric_constant_rank_bound}

Let $\M$ be a constant rank $m$ subspace of $\Symm(V)$. Then if $K$ has characteristic different from $2$ and $|K|\geq m+1$, we have $\dim \M\leq n$. (When $m=n$, the bound $\dim \M\leq n$ holds for all fields $K$ without exception, as noted in Theorem \ref{finite_constant_rank}.)

\end{theorem}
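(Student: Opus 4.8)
The plan is to prove $\dim \M \leq n$ for a constant rank $m$ subspace of $\Symm(V)$ by induction on $\dim V = n$, using the vector $u$ supplied by Lemma \ref{symmetric_no_elements_of_maximum_rank} to descend to a smaller space. The key observation is that Lemma \ref{symmetric_no_elements_of_maximum_rank} produces a $u \in V$ such that $\M_u$ contains no element of rank $m$; since $\M$ has constant rank $m$, this forces $\M_u = 0$. By the rank–nullity identity $\dim \M = \dim \epsilon_u(\M) + \dim \M_u$ (recall $\M_u = \M_u^L = \M_u^R$ in the symmetric case), we immediately get $\dim \M = \dim \epsilon_u(\M) \leq \dim V^* = n$. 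In fact this gives the bound outright without any induction, so the main work is simply assembling these two facts.

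First I would dispose of the case $m = n$, which is already covered by Theorem \ref{finite_constant_rank} and holds over any field. So assume $m < n$ and $|K| \geq m+1$ with $\mathrm{char}\,K \neq 2$. Next I would invoke Lemma \ref{symmetric_no_elements_of_maximum_rank} to obtain $u \in V$ for which $\M_u$ contains no element of rank $m$. Since every nonzero element of $\M$ has rank exactly $m$, the subspace $\M_u$ can contain no nonzero element at all, whence $\M_u = 0$.

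Finally I would apply the linear map $\epsilon_u : \M \to V^*$ whose kernel is precisely $\M_u^L = \M_u$. Because $\M_u = 0$, the map $\epsilon_u$ is injective, so
\[
\dim \M = \dim \epsilon_u(\M) \leq \dim V^* = n,
\]
which is the desired bound.

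I do not expect a genuine obstacle here, since the essential content has already been packaged into Lemma \ref{symmetric_no_elements_of_maximum_rank}; the only point requiring a moment's care is the logical step from ``$\M_u$ contains no element of rank $m$'' to ``$\M_u = 0$,'' which uses in an essential way that $\M$ has \emph{constant} rank $m$ rather than merely having $m$ as its largest rank. It is worth noting that the hypothesis $\mathrm{char}\,K \neq 2$ enters only through Lemma \ref{symmetric_no_elements_of_maximum_rank} (to guarantee some form $h$ with $h(u,u) \neq 0$), and that this argument, unlike the finite-field proof of Theorem \ref{finite_constant_rank}, works over arbitrary fields of size at least $m+1$.
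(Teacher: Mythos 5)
Your proof is correct and follows essentially the same route as the paper's own argument: both use Lemma \ref{symmetric_no_elements_of_maximum_rank} plus the constant rank hypothesis to produce a vector $u$ with $\M_u=0$, and then conclude $\dim \M\leq n$, your injectivity-of-$\epsilon_u$ step being exactly the content of Lemma \ref{M_u_dimension_bound}. The mention of induction at the outset is harmless since, as you yourself observe, none is needed.
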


\begin{proof}
 We suppose that $|K|\geq m+1$. 
 Clearly, by the constant rank hypothesis, Lemma \ref{symmetric_no_elements_of_maximum_rank} implies that there is some $u$ in $V$
 such that $\M_u=0$. Lemma \ref{M_u_dimension_bound} implies then that $\dim \M\leq n$, as required.
\end{proof}

We would like now to show that, in contrast to Theorem \ref{symmetric_constant_rank_bound}, given positive integers
$m$ and $n$, with $2\leq m\leq n$, and making certain assumptions about $K$, $\Symm(V)$ contains a constant rank $m$ subspace $\M$ of dimension $m$ that is maximal with respect to the property of being constant rank $m$. In other words, $\M$ is not contained in any larger constant rank $m$ subspace
of $\Symm(V)$. Thus for example, under fairly weak hypotheses on $K$, there are 
two-dimensional maximal constant rank 2 subspaces of $\Symm(V)$. The construction is based on simple concepts of field theory.

Suppose that $K$ has a separable extension $L$, say, of degree $m\geq 2$, but is otherwise arbitrary. We consider $L$ as a vector space of dimension $m$ over $K$. Let
$\Tr:L\to K$ denote the trace form. 

For each element $z$ of $L$, we define an element $f_z$ of $\Symm(L)$ by setting 
\[
f_z(x,y)=\Tr(z(xy))
\]
for all $x$ and $y$ in $L$. 

Let $\N$ be the subspace of $\Symm(L)$ consisting of all the $f_z$. We have $\dim \N=m$, and each element of $\N^\times$ has rank $m$, since $\Tr$ is non-zero under the hypothesis of separability. 

We note the following property of $\N$ whose (omitted) proof depends on the fact that $\Tr$ is non-zero.

\begin{lemma} \label{non_vanishing_property}

Let $\N$ be the subspace of $\Symm(L)$ described above. Let $x$ and $y$
be elements of $L$ such that 
\[
f_z(x,y)=0
\]
for all $f_z$ in $\N$. Then $x=0$ or $y=0$.
\end{lemma}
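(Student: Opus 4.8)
The plan is to reduce the stated non-vanishing property to the non-degeneracy of the trace form, which itself follows from the hypothesis that $\Tr$ is non-zero together with the fact that $L$ is a field.

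First I would unwind the definition of $f_z$. The hypothesis that $f_z(x,y)=0$ for all $f_z$ in $\N$ means precisely that
\[
\Tr(z\cdot xy)=0
\]
for every $z$ in $L$. Writing $w=xy$, an element of $L$, the hypothesis becomes $\Tr(zw)=0$ for all $z$ in $L$.

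The key step is then to argue that $w=0$. Suppose instead that $w\neq 0$. Since $L$ is a field, $w$ is invertible, so as $z$ ranges over all of $L$, the product $zw$ also ranges over all of $L$. Hence $\Tr(t)=0$ for every $t$ in $L$, which says that the trace form is identically zero. This contradicts the non-vanishing of $\Tr$ guaranteed by the separability of $L/K$. Therefore $w=xy=0$.

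Finally, since $L$ is a field and hence an integral domain, $xy=0$ forces $x=0$ or $y=0$, which is the desired conclusion. The only real content of the argument is the middle step, where the field structure of $L$ upgrades the single non-vanishing statement $\Tr\neq 0$ into the full non-degeneracy that is needed; there is no genuine obstacle here, and no restriction on the size of $K$ is required.
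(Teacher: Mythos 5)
Your proof is correct and is exactly the argument the paper intends: the paper omits the proof but notes it ``depends on the fact that $\Tr$ is non-zero,'' and your reduction via $w=xy$, using invertibility of $w$ to upgrade $\Tr\neq 0$ to non-degeneracy of the pairing $(z,w)\mapsto\Tr(zw)$, together with $L$ being an integral domain, is precisely that argument. Nothing is missing, and you are right that no hypothesis on $|K|$ is needed here.
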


We can now construct our example.

\begin{theorem} \label{maximal_example}

Suppose that $K$ has a separable extension of degree $m\geq 2$. Then if $m\leq n=\dim V$
and $|K|\geq m+1$, 
$\Symm(V)$ contains a constant rank $m$ subspace $\M$ of dimension $m$ that is contained in no larger constant rank $m$ subspace of $\Symm(V)$. Thus, $\M$ is maximal with respect to containment in constant rank $m$ subspaces.

\end{theorem}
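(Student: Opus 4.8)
The plan is to build $\M$ by extending the subspace $\N\subseteq\Symm(L)$ of the preceding construction from $L$ up to $V$. Since $m\leq n$, I would write $V=L\oplus W$ with $\dim W=n-m$, and for each $z\in L$ extend $f_z$ to a form $\tilde f_z\in\Symm(V)$ by setting $\tilde f_z(x+w,\,y+w')=f_z(x,y)$ for $x,y\in L$ and $w,w'\in W$. Taking $\M=\{\tilde f_z:z\in L\}$, extension by zero is linear and injective, so $\dim\M=\dim\N=m$; each nonzero $\tilde f_z$ has the same rank as $f_z$, namely $m$, so $\M$ is constant rank $m$. Moreover, since $f_z$ is nondegenerate on $L$, the radical of each nonzero $\tilde f_z$ is exactly $W$.

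A key preliminary step is to verify that $\N$ itself is maximal among constant rank $m$ subspaces of $\Symm(L)$. Here I would use the trace form $f_1(x,y)=\Tr(xy)$, which is nondegenerate by separability, to represent any $h\in\Symm(L)$ as $h(x,y)=\Tr(x\,\beta(y))$ for a unique $K$-linear $\beta:L\to L$; the forms $f_z$ correspond precisely to the multiplication operators $\beta=M_z$. If $h\notin\N$, then $\beta$ is not a multiplication operator, but for any nonzero $v\in L$ the element $z_0=-\beta(v)v^{-1}\in L$ makes $M_{z_0}+\beta$ annihilate $v$; hence $f_{z_0}+h$ is a nonzero degenerate element of $\N+\langle h\rangle$, which is therefore not constant rank $m$. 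This shows $\N$ is maximal in $\Symm(L)$.

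For the maximality of $\M$ in $\Symm(V)$, suppose $g\in\Symm(V)\setminus\M$ with $\M+\langle g\rangle$ constant rank $m$. The crux — and the step I expect to be the main obstacle — is to show that $g$ is supported on $L$, i.e. that $\rad g=W$. I would obtain this from Theorem \ref{left_radical_right_radical}: applied to $g$, a rank-$m$ (hence maximal rank) element of $\M+\langle g\rangle$, it gives $\tilde f_z(u,w)=0$ for all $u,w\in\rad g$ and all $z\in L$. Since $\tilde f_z(u,w)=f_z(\pi u,\pi w)$ for the projection $\pi:V\to L$, taking $u=w$ and invoking Lemma \ref{non_vanishing_property} forces $\pi u=0$; thus $\rad g\subseteq W$, and equality follows from $\dim\rad g=n-m=\dim W$. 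Consequently $g=\tilde h$ for some $h\in\Symm(L)$, and since extension by zero preserves rank, $\N+\langle h\rangle$ is constant rank $m$. By the maximality of $\N$ just established, $h\in\N$ and hence $g\in\M$, contradicting $g\notin\M$. This completes the argument.
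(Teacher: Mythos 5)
Your proof is correct, and its skeleton matches the paper's: the same extension-by-zero construction of $\M$ from $\N$ with common radical $W$, and the same crux, namely applying Theorem \ref{left_radical_right_radical} to a hypothetical new element $g$ of maximal rank $m$ and then Lemma \ref{non_vanishing_property} (with $x=y=\pi u$) to force $\rad g\subseteq W$, hence $\rad g=W$ by dimension count. (Your reduction to single-element extensions $\M+\langle g\rangle$ is harmless: any strictly larger constant rank $m$ subspace contains such an extension.) Where you genuinely diverge is the endgame. The paper, having shown every element of the larger space $\M_1$ has radical $W$, identifies $\M_1$ with a constant rank $m$ subspace of $\Symm(V/W)$ with $\dim(V/W)=m$ and invokes the elementary fact (the $m=n$ case noted in Theorem \ref{finite_constant_rank}) that a subspace of nondegenerate forms on an $m$-dimensional space has dimension at most $m$, contradicting $\dim\M_1>m$. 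You instead prove outright that $\N$ is maximal in $\Symm(L)$: using nondegeneracy of the trace form to write $h(x,y)=\Tr(x\,\beta(y))$, you observe that if $\beta$ is not a multiplication operator then for any $v\in L^\times$ the choice $z_0=-\beta(v)v^{-1}$ makes $f_{z_0}+h$ a nonzero form with $v$ in its radical (nonzero because $\beta\neq M_{-z_0}$). This step checks out and is in fact slightly stronger and more explicit than what the paper uses: it exhibits a concrete degenerate element in $\N+\langle h\rangle$ for every $h\notin\N$, needs no hypothesis on $|K|$ at that stage, and avoids citing the full-rank dimension bound; the paper's route is shorter and applies verbatim to any constant rank $m$ subspace of $\Symm(V/W)$ of dimension $m$, not just to the trace-form model $\N$. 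Both arguments use $|K|\geq m+1$ only where it is truly needed, in the appeal to Theorem \ref{left_radical_right_radical}.
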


\begin{proof}
 Let $U$ be a subspace of $V$ with $\dim U=m$, and let $W$ be a complement for $U$ in $V$. 
We may identify $U$ with $L$ as a vector space of dimension $m$ over $K$, and hence may define a constant rank $m$ subspace of $\Symm(U)$ of dimension $m$. We may then extend this subspace to a subspace $\M$
of $\Symm(V)$ by setting the extensions to have radical $W$ in all non-zero cases.

We claim that the subspace $\M$ thus constructed is maximal in $\Symm(V)$ with respect to being constant rank $m$. For suppose that $\M$ is contained in a larger constant rank $m$ subspace, $\M_1$, say, of $\Symm(V)$. Let $f$ be an element of $\M_1$ not contained in $\M$. We aim to show that $\rad f=W$.

Now since $|K|\geq m+1$, Theorem \ref{left_radical_right_radical} implies that $\rad f$ is totally isotropic for $\M$. Let $v$ be any element of $\rad f$ and set $v=u+w$, where $u\in U$ and $w\in W$. Let $g$ be any element of $\M$. Then we have $g(v,v)=0$
and hence
\[
g(u+w,u+w)=g(u,u)=0,
\]
since $w\in \rad g$. Thus $g(u,u)=0$ for all $g\in \M$. Lemma \ref{non_vanishing_property} implies that $u=0$.

We see therefore that $\rad f\leq W$ and hence $\rad f=W$, by consideration of dimensions. We have thus proved that all  elements of $\M_1^\times$ have radical
$W$.

It is permissible then to identify $\M_1$ with a constant rank $m$ subspace
of $\Symm(V/W)$. Since $V/W$ has dimension $m$, it follows that $\dim \M_1\leq m$.
This is a contradiction, and our claim that $\M$ is maximal is established.
\end{proof}

When we work over the field of real numbers, we can show that there are one-dimensional constant rank $m$ subspaces that are maximal for any positive integer value of $m$. Our next theorem provides the details.

\begin{theorem} \label{real_example}
Let $K$ be the field of real numbers
and let $m$ be a positive integer with $m\leq n$. Let $\M$ be a constant rank $m$ subspace of $\Symm(V)$ that contains a non-zero positive semidefinite element. Then $\dim \M=1$.
\end{theorem}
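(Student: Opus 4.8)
The plan is to exploit the special structure of positive semidefinite forms over $\mathbb{R}$, namely that a symmetric form $f$ of rank $m$ is positive semidefinite if and only if $f(v,v)\geq 0$ for all $v$, with equality precisely on $\rad f$. Let $f_0$ be the given non-zero positive semidefinite element of $\M$, so $f_0$ has rank $m$ and $\rad f_0$ is exactly the set of $v$ with $f_0(v,v)=0$. My goal is to show that any element $g\in\M$ is a scalar multiple of $f_0$, which forces $\dim\M=1$.

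First I would take an arbitrary $g\in\M$ and consider the pencil $f_\lambda:=f_0+\lambda g$ for $\lambda\in\mathbb{R}$. Since $\M$ has constant rank $m$, every $f_\lambda$ is either zero or has rank exactly $m$; in particular, for every $\lambda$ for which $f_\lambda\neq 0$, we have $\rank f_\lambda=m$, so $\dim\rad f_\lambda=n-m$. The key idea is to track the quadratic form $q_\lambda(v):=f_\lambda(v,v)=f_0(v,v)+\lambda\,g(v,v)$ as a function of $\lambda$ and to use the rigidity coming from $f_0$ being semidefinite. Because $f_0(v,v)\geq 0$ everywhere and vanishes exactly on $\rad f_0$, for small $|\lambda|$ the perturbed form $f_\lambda$ should remain positive semidefinite with the same null space behaviour, and I would argue that its radical must then coincide with $\rad f_0$ on an interval of $\lambda$ values.

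The cleanest route is to show $\rad f_\lambda=\rad f_0$ for all $\lambda$ in some interval around $0$. Fix a vector $v\in\rad f_0$, so $f_0(v,v)=0$. For $f_\lambda$ to be among our constant rank $m$ forms and for $f_0$ positive semidefinite, I would examine $q_\lambda(v)=\lambda\,g(v,v)$: if $g(v,v)\neq 0$ for some $v\in\rad f_0$, then choosing the sign of $\lambda$ to make $\lambda\,g(v,v)<0$ produces a form $f_\lambda$ taking a negative value at $v$ while still taking positive values elsewhere (inherited from $f_0$ by continuity for small $\lambda$), so $f_\lambda$ is indefinite of rank $m$ but its signature has changed from that of $f_0$. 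Since signature is locally constant on the rank-$m$ locus of a line of forms (the eigenvalues vary continuously and none can cross zero without the rank dropping below $m$), this is the desired contradiction. Hence $g(v,v)=0$ for all $v\in\rad f_0$, and by polarization $\rad f_0$ is totally isotropic for $g$; combined with Theorem \ref{left_radical_right_radical} applied to $f_0$ this pins down how $g$ acts.

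The main obstacle I anticipate is the careful continuity and signature argument: I must verify that along the line $\{f_0+\lambda g\}$ the signature genuinely cannot jump without the rank dropping, which requires knowing the nonzero eigenvalues stay bounded away from zero uniformly for small $\lambda$ and that the kernel dimension stays at $n-m$ by the constant rank hypothesis. Once $\rad f_0\subseteq\rad g$ (hence equal, by equality of dimensions and the constant rank condition) is established for every $g\in\M$, all forms in $\M$ share the radical $R:=\rad f_0$, so $\M$ descends to a constant rank $m$ space on the $m$-dimensional quotient $V/R$ on which every nonzero element is nondegenerate; there the positive semidefinite $f_0$ becomes positive definite, and I would finish by showing a constant (full) rank space of symmetric forms containing a definite form on an $m$-dimensional real space is one-dimensional, e.g.\ via the fact that $f_0^{-1}g$ would have to have a real eigenvalue whose associated eigenvector lands in the radical of some nonzero combination, contradicting nondegeneracy unless $g$ is a scalar multiple of $f_0$.
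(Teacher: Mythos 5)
Your endgame coincides with the paper's: once all nonzero elements of $\M$ share the radical $R=\rad f_0$, you pass to $V/R$, where $f_0$ becomes positive definite and every nonzero element of $\M$ is nondegenerate, and your generalized-eigenvalue argument (a real $\mu$ and $v\neq 0$ with $gv=\mu f_0v$ force $g-\mu f_0$ to be degenerate, hence zero) is exactly the paper's simultaneous orthogonalization of $f$ and $g$. The divergence is in how the common radical is obtained, and there your write-up has one genuine soft spot. The pencil-and-signature analysis itself is sound: along $f_\lambda=f_0+\lambda g$ the kernel dimension is constantly $n-m$ by the constant rank hypothesis, the ordered eigenvalues vary continuously, so the signature is locally constant and $g(v,v)=0$ for all $v\in\rad f_0$. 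But note this machinery only re-derives (on the diagonal, then by polarization) the conclusion of Theorem \ref{left_radical_right_radical} applied to $f_0$, which is freely available since $|\mathbb{R}|\geq m+1$: every $g\in\M$ vanishes on $\rad f_0\times\rad f_0$. More importantly, this information points the wrong way. From ``$\rad f_0$ is totally isotropic for $g$'' you jump to ``once $\rad f_0\subseteq\rad g$ is established,'' but that inference is invalid as a general implication: for the hyperbolic form $g(x,y)=x_1y_2+x_2y_1$ on $\mathbb{R}^2$, the line spanned by $e_1$ is totally isotropic while $\rad g=0$. To close the gap within your framework you must exploit the positive semidefiniteness of $f_\lambda$ for small $\lambda\neq0$, which your signature argument does supply: for $v\in\rad f_0$ one has $f_\lambda(v,v)=\lambda g(v,v)=0$, and an isotropic vector of a positive semidefinite form lies in its radical, so $\rad f_0\subseteq\rad f_\lambda$, equality follows from equal dimensions, and subtracting $f_0$ yields $\rad f_0\subseteq\rad g$, hence equality.

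Contrast this with the paper, which needs no continuity at all because it applies Theorem \ref{left_radical_right_radical} to $g$ rather than to $f_0$: since $g$ also has the maximal rank $m$, every element of $\M$ --- in particular $f_0$ --- vanishes on $\rad g\times\rad g$; positive semidefiniteness of $f_0$ then gives $\rad g\subseteq\rad f_0$ immediately (the paper phrases this as $U\cap(R+\rad f)=0$ for a complement $U$ on which $f$ is definite), and equality of dimensions finishes the stage in two lines. So your analytic route is viable once the missing splice above is inserted, and it has the mild independent interest of using only real-analytic rigidity along the pencil instead of Theorem \ref{left_radical_right_radical}; but inside this paper it is strictly more work, and as written the passage from total isotropy to radical containment is a gap that must be repaired as indicated.
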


\begin{proof}
Let $f\neq 0$ be a positive semidefinite element in $\M$ and let $U$ be a complement to $\rad f$ in $V$. Then, $f$ is positive definite on $U\times U$. Now let $g$ be any non-zero element
in $\M$ and let $R=\rad g$. Theorem \ref{left_radical_right_radical} implies that
$f$ is zero on $R\times R$. Hence $f$ is also zero on $(R+\rad f)\times (R+\rad f)$.

Since $f$ is positive definite on $U\times U$, and zero on $(R+\rad f)\times (R+\rad f)$,
we have $U\cap (R+\rad f)=0$. It follows that $R=\rad f$ and thus all non-zero elements
of $\M$ have the same radical, which we may take to be 0. We may thus assume that all non-zero elements have maximum rank $n$.

Suppose if possible that $\dim \M\geq 2$. Let $g$ be an element of $\M$ linearly independent of $f$. By a well known theorem of linear algebra over the real numbers,
there is a basis of $V$ that is orthonormal with respect to $f$ and orthogonal
with respect to $g$. But then it follows easily that there is a linear combination of $f$ and $g$ that has non-zero radical, contradicting our statement in the paragraph above.
We deduce that $\dim \M=1$, as required.
\end{proof}

\begin{corollary} \label{one_dimensional_example}
Let $K$ be the field of real numbers
and let $m$ be a positive integer with $m\leq n$. Then there exists a one-dimensional maximal constant rank $m$ subspace of $\Symm(V)$.
\end{corollary}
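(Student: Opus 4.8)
The plan is to deduce Corollary \ref{one_dimensional_example} directly from Theorem \ref{real_example}, which already does the substantive work. The only thing Theorem \ref{real_example} requires as input is a constant rank $m$ subspace $\M$ of $\Symm(V)$ that contains a non-zero positive semidefinite element; its conclusion is that any such $\M$ is forced to be one-dimensional. Thus to prove the corollary it suffices to exhibit a single non-zero positive semidefinite symmetric form $f$ on $V$ of rank exactly $m$, and then take $\M=Kf$. Automatically $\M$ is a constant rank $m$ subspace containing the positive semidefinite element $f$, so Theorem \ref{real_example} certifies that $\M$ cannot be enlarged to any constant rank $m$ subspace of dimension $2$ or more, i.e.\ $\M$ is maximal.

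First I would construct $f$ explicitly. Since $K=\mathbb{R}$ and $m\leq n$, choose a basis $e_1,\dots,e_n$ of $V$ and define
\[
f\Bigl(\sum_i a_i e_i,\ \sum_j b_j e_j\Bigr)=\sum_{i=1}^{m} a_i b_i.
\]
This is symmetric, its Gram matrix is the diagonal matrix with $m$ ones followed by $n-m$ zeros, so it has rank $m$, and it is manifestly positive semidefinite (indeed positive definite on the span of $e_1,\dots,e_m$). Then $\M=Kf=\{\lambda f:\lambda\in\mathbb{R}\}$ is one-dimensional, every non-zero element $\lambda f$ still has rank $m$ so $\M$ is constant rank $m$, and $f$ itself is the required positive semidefinite element.

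Second, I would invoke maximality. Suppose $\M$ were contained in a strictly larger constant rank $m$ subspace $\M_1$ of $\Symm(V)$. Then $\M_1$ is itself a constant rank $m$ subspace of $\Symm(V)$ containing the positive semidefinite element $f$, so Theorem \ref{real_example} applies to $\M_1$ and forces $\dim\M_1=1$. But $\M\subseteq\M_1$ with $\dim\M=1$ then gives $\M=\M_1$, contradicting strict containment. Hence $\M$ is maximal with respect to being constant rank $m$, completing the proof.

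There is essentially no obstacle here, since all the difficulty is absorbed by Theorem \ref{real_example}; the corollary is really just the observation that the hypothesis of that theorem is non-vacuous for every $m\leq n$ over $\mathbb{R}$. The only point requiring any care is to confirm that a one-dimensional subspace is genuinely constant rank (every non-zero scalar multiple of $f$ has the same rank as $f$, which is immediate) and that the maximality argument uses Theorem \ref{real_example} on the larger space $\M_1$ rather than on $\M$ itself.
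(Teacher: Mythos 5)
Your proposal is correct and follows exactly the route the paper intends: the corollary is stated immediately after Theorem \ref{real_example} with no separate proof precisely because it follows by taking $\M=Kf$ for any rank $m$ positive semidefinite form $f$ and applying that theorem to any purported larger constant rank $m$ subspace containing $f$. Your care in applying the theorem to the enlarged space $\M_1$ rather than to $\M$ itself is exactly the right reading of the argument.
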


We return now to the theme of bounding $\dim \M$ in terms of $|\rank(\M)|$. We investigate
subspaces of $\Symm(V)$ and employ the idea underlying the proof of Theorem \ref{symmetric_constant_rank_bound}. 

\begin{theorem} \label{symmetric_general_dimension_bound}
Let $\mathcal{M}$ be a subspace of $\Symm(V)$ and let $r=|\rank(\M)|$. Then if $K$ has characteristic different from $2$ and $|K|\geq n$, we have
\[
 \dim \mathcal{M}\leq rn-\frac{r(r-1)}{2}.
\]
\end{theorem}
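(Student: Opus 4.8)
The plan is to prove the bound by induction on $n=\dim V$, where the statement for dimension $n$ carries the hypothesis $|K|\geq n$. At each stage I reduce to a quotient space of dimension $n-1$ while simultaneously decreasing $|\rank(\M)|$ by one; the arithmetic then produces the term $\frac{r(r-1)}{2}$ automatically. The case $\M=0$ is trivial and the case $r=1$ is exactly Theorem \ref{symmetric_constant_rank_bound}, so I assume $r\geq 2$ and that the bound holds for all subspaces of $\Symm(\bar V)$ with $\dim\bar V=n-1$ (over the same field, where $|K|\geq n> n-1$ is what the inductive hypothesis requires).

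First I would produce the vector along which to restrict. Let $m$ be the largest integer in $\rank(\M)$. If $m<n$, then since $|K|\geq n\geq m+1$, Lemma \ref{symmetric_no_elements_of_maximum_rank} supplies a vector $u$ such that $\M_u$ contains no element of rank $m$; its construction gives some $h\in\M$ with $h(u,u)\neq 0$, so $u\neq 0$. If $m=n$, any nonzero $u$ will do, because a form of rank $n$ has trivial radical and hence cannot lie in $\M_u$; this observation is what lets the hypothesis $|K|\geq n$ (rather than $|K|\geq n+1$) suffice in the borderline case. Either way $u\neq 0$ and $\rank(\M_u)\subseteq\rank(\M)\setminus\{m\}$, so $|\rank(\M_u)|\leq r-1$.

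The heart of the argument is the passage to the quotient. Every $f\in\M_u$ has $u\in\rad f$ --- here symmetry is essential, so that $\M_u=\M_u^L=\M_u^R$ records a two-sided radical --- and hence $f$ descends to a symmetric form $\bar f$ on $\bar V=V/\langle u\rangle$. The assignment $f\mapsto\bar f$ is injective and preserves rank (its kernel is $0$, and $\rad\bar f=\rad f/\langle u\rangle$ gives $\rank\bar f=\rank f$), so its image $\N\leq\Symm(\bar V)$ satisfies $\dim\N=\dim\M_u$ and $|\rank(\N)|\leq r-1$. Applying the inductive hypothesis to $\N$ on the $(n-1)$-dimensional space $\bar V$, together with $\dim\M\leq\dim\M_u+n$ from Lemma \ref{M_u_dimension_bound}, yields
\[
\dim\M\leq n+(r-1)(n-1)-\frac{(r-1)(r-2)}{2}=rn-\frac{r(r-1)}{2},
\]
where I used $\frac{r(r-1)}{2}=\frac{(r-1)(r-2)}{2}+(r-1)$, and a short observation on the monotonicity of the bound in the rank parameter covers the case $|\rank(\N)|<r-1$.

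The step I expect to need the most care is the bookkeeping that both parameters drop by exactly one in the way required by the arithmetic. The decrease in $|\rank|$ rests on Lemma \ref{symmetric_no_elements_of_maximum_rank}, and therefore on $\operatorname{char}K\neq 2$ (so that some $h\in\M$ and $u$ have $h(u,u)\neq 0$), which is precisely the hypothesis the paper flags as essential; the decrease in $n$ and the exact saving of $r-1$ at this step come from the fact that $u$ lies in the radical of every surviving form. Keeping the field-size hypothesis intact across the recursion, and disposing of the $m=n$ borderline case as above, are the only genuinely delicate points, the remainder being the displayed identity.
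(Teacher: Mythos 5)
Your proposal is correct and takes essentially the same route as the paper: the paper inducts on $r$ rather than $n$ and identifies $\M_u$ with a subspace of $\Symm(U')$ for a complement $U'$ of $\langle u\rangle$ rather than passing to the quotient $V/\langle u\rangle$, but the inductive step is identical --- Lemma \ref{symmetric_no_elements_of_maximum_rank} produces $u$ with $\M_u$ free of rank-$m$ elements, $\M_u$ becomes a subspace of symmetric forms on an $(n-1)$-dimensional space with $|\rank(\M_u)|\leq r-1$, and Lemma \ref{M_u_dimension_bound} plus the same arithmetic closes the bound. Your explicit handling of the borderline case $m=n$ (where $|K|\geq m+1$ is not available but any nonzero $u$ works because a rank-$n$ form has trivial radical) is a detail the paper's proof leaves implicit, and you resolve it correctly.
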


\begin{proof}
We proceed by induction on $r$. The result is trivially
true when $r=0$, so we can therefore assume that $r\geq 1$. 

Let $m$ be the largest integer in $\rank(\M)$. Lemma \ref{symmetric_no_elements_of_maximum_rank} implies that there exists some element
$u$ in $V$ such that $\M_u$ contains no element of rank $m$. Thus, $|\rank(\M_u)|\leq r-1$.

Let $U$ be the one-dimensional subspace of $V$ spanned by $u$ and let $U'$ be a complement of $U$ in $V$. Since $U$ is in the radical of each element 
of $\M_u$, we may identify $\M_u$ with a subspace of $\Symm(U')$. 

Since $\dim U'=n-1$, and  $|\rank(\M_u)|\leq r-1$, we have by induction that
\[
\dim \M_u\leq (r-1)(n-1)-\frac{(r-1)(r-2)}{2}.
\]

Lemma \ref{M_u_dimension_bound} yields that $\dim \M\leq \dim \M_u+n$ and we obtain the bound
\[
\dim \M\leq (r-1)(n-1)-\frac{(r-1)(r-2)}{2}+n=rn-\frac{r(r-1)}{2},
\]
as required.
\end{proof}

\bigskip

\noindent{\bf Example 1.}
Let $r$ be a positive integer such that $r\leq n/2$. Let $\M'$ be the subspace of all $n\times n$ symmetric matrices of the form 
\[
\left(
\begin{array}
{cc}
        0&A\\
        A^T&0   
\end{array}
\right),
\]
where $A$ runs over all $r\times (n-r)$ matrices with entries in $K$, and $A^T$ denotes the transpose of $A$. 

Such a matrix has rank $2s$, where $s$ is the rank of $A$ (and hence $0\leq s\leq r$). Thus $|\rank(\M')|=r$ and $\dim \M'=r(n-r)$. $\M'$ defines a subspace $\M$, say, of $\Symm(V)$, with $\dim \M=\dim \M'=r(n-r)$ and $|\rank(\M)|=|\rank(\M')|=r$.

The bound for $\dim \M$ given by our theorem differs from the exact dimension by $r(r+1)/2$. Thus, for small values of $r$, we have a reasonably accurate dimension bound.

\bigskip
\noindent {\bf Example 2.}
It is straightforward to show that a real symmetric matrix of trace 0 cannot have rank one. It follows that if $V$ is a vector space of dimension $n$ over the field
of real numbers, $\Symm(V)$ contains a subspace $\M$ of dimension $n(n+1)/2-1$ in which $|\rank(\M)|=n-1$. This shows that Theorem \ref{symmetric_general_dimension_bound}
is precise in the case $|\rank(\M)|=n-1$ for subfields of the real numbers.

\section{Constant rank subspaces of symmetric forms over finite fields}

\noindent Theorem \ref{symmetric_constant_rank_bound} shows that the dimension
of a constant rank subspace of $\Symm(V)$ is at most $n$, provided that the underlying field is sufficiently large and has characteristic different from 2. A lack of specific examples or of construction processes suggests that this upper bound is rarely obtained. This section is devoted to improving Theorem \ref{symmetric_constant_rank_bound}, although our definitive results are currently restricted to finite fields, as ultimately we employ counting techniques to complete our arguments.

We begin with definitions of concepts which were implicit in the previous section.

\begin{definition} \label{totally_isotropic_subspace}

Let $\M$ be a subspace of $\Symm(V)$. We say that a subspace $U$ of $V$ is totally isotropic for $\M$ if
\[
f(u,w)=0
\]
for all $u$ and $w$ in $U$, and all $f$ in $\M$.

\end{definition}

\begin{definition} \label{isotropic_points}

Let $\M$ be a subspace of $\Symm(V)$. We set 
\[
I(\M)= \{ w\in V: f(w,w)=0 \mbox{ for all } f\in \M\}.
\]
We also set $I(\M)^\times$ to be the subset of non-zero elements in $I(\M)$.

\end{definition}

Clearly, a non-zero vector is in $I(\M)^\times$ if and only if the one-dimensional  subspace it spans is totally isotropic for $\M$. More generally, a subspace of $V$ that is totally isotropic for $\M$ is contained in $I(\M)$. 

We consider the converse next. 

\begin{lemma} \label{totally_isotropic_in_subspace}
Let $U$ be a subspace 
of $V$ that is contained in $I(\M)$. Then $U$ is totally isotropic for $\M$
provided that $K$ has characteristic different from $2$.

\end{lemma}

\begin{proof}
Let $u$ and $w$ be elements of $U$ and $f$ be an element of $\M$. Then since $U$ is contained in $I(\M)$, we have
\[
f(u,u)=f(w,w)=f(u+w,u+w)=0
\]
and hence $2f(u,w)=0$, using the symmetry of $f$. We thus have $f(u,w)=0$ when $K$ has characteristic different from 2, and $U$ is totally isotropic for $\M$.
\end{proof}

We remark that in this section, the key point of one argument involves the case that $I(\M)$ is itself a subspace of $V$. It is in this case that we have resorted to counting techniques to resolve our problems.

\begin{definition} \label{A_u_definition}
Let $\M$ be a subspace of $\Symm(V)$ and let $u$ be an element of $V$. We let $\A_u$
denote the subspace of $V$ annihilated by the subspace $\epsilon_u(\M)$ of $V^*$.
Thus
\[
\A_u=\{ w\in V: f(u,w)=0 \mbox{ for all }f\in \M\}.
\]
\end{definition}

The lemma that follows is important for our analysis of constant rank subspaces of $\Symm(V)$ of dimension $n$ (assuming they exist).

\begin{lemma} \label{key_lemma_isotropic}

Let $\M$ be an $n$-dimensional constant rank $m$ subspace of $\Symm(V)$. Then, given a vector $u$ in $V^\times$, the subspace $\A_u$ is totally isotropic for $\M$ provided that $|K|\geq m+1$ and $K$ has characteristic different from $2$.

\end{lemma}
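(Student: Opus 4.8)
The plan is to reduce the claim to the polarization fact already recorded in Lemma \ref{totally_isotropic_in_subspace}. Since $K$ has characteristic different from $2$, that lemma guarantees that any subspace of $V$ contained in $I(\M)$ is automatically totally isotropic for $\M$. Hence it suffices to prove the inclusion $\A_u\subseteq I(\M)$; that is, to show that every $w\in\A_u$ satisfies $g(w,w)=0$ for all $g\in\M$. I would fix such a $w$ and argue that $w\in I(\M)$.

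The first step is to exploit symmetry. By definition of $\A_u$ we have $f(u,w)=0$ for all $f\in\M$, and since every element of $\M$ is symmetric this gives $f(w,u)=0$ for all $f\in\M$, i.e. $u\in\A_w$. Thus $\A_w$ contains the nonzero vector $u$. The second step is a dimension count that uses the hypothesis $\dim\M=n$ in an essential way. Applying rank-nullity to $\epsilon_w\colon\M\to V^*$, whose kernel is $\M_w$, yields $\dim\epsilon_w(\M)=n-\dim\M_w$; and since $\A_w$ is the annihilator in $V$ of $\epsilon_w(\M)$, we get $\dim\A_w=n-\dim\epsilon_w(\M)=\dim\M_w$. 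Because $\A_w\neq 0$ (it contains $u$), we conclude $\M_w\neq 0$.

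The final step invokes Theorem \ref{left_radical_right_radical}. As $\M_w\neq 0$ and $\M$ is a constant rank $m$ subspace, there is a nonzero $f\in\M$ of rank $m$ with $w\in\rad f$. Since $f$ is symmetric, $w$ lies in both $\rad_L f$ and $\rad_R f$, and $m$ is the largest (indeed the only) integer in $\rank(\M)$. Taking both distinguished vectors in the theorem to be $w$, and using $|K|\geq m+1$, I obtain $g(w,w)=0$ for all $g\in\M$, so $w\in I(\M)$. This establishes $\A_u\subseteq I(\M)$, and Lemma \ref{totally_isotropic_in_subspace} then finishes the proof.

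I do not expect a serious obstacle: the content is entirely in choosing the right reduction ($\A_u\subseteq I(\M)$) and in the symmetry trick that places $u$ into $\A_w$, after which the identity $\dim\A_w=\dim\M_w$ forces $\M_w\neq 0$ and Theorem \ref{left_radical_right_radical} does the rest. The one point to watch is that the argument genuinely needs the exact equality $\dim\M=n$, not merely $\dim\M\leq n$: it is this equality that makes $\dim\A_w=\dim\M_w$, so that a nonzero $\A_w$ yields a nonzero $\M_w$.
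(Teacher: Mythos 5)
Your proposal is correct and follows essentially the same route as the paper's proof: symmetry places $u$ in $\A_w$, the equality $\dim\M=n$ forces $\M_w\neq 0$ (the paper phrases this as $\epsilon_w(\M)\neq V^*$, which is dual to your identity $\dim\A_w=\dim\M_w$), and Theorem \ref{left_radical_right_radical} combined with Lemma \ref{totally_isotropic_in_subspace} finishes the argument. Your closing remark that the exact equality $\dim\M=n$ is essential is exactly the point the paper's proof also turns on.
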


\begin{proof}
Let $w$ be an element of $\A_u^\times$. We aim to show that $w\in I(\M)^\times$. Now since $\epsilon_u(\M)$ vanishes on $w$, the symmetry of our forms implies that
$\epsilon_w(\M)$ vanishes on $u$. Then, since $u\neq 0$, $\epsilon_w(\M)\neq V^*$.

Now we have the general equality
\[
\dim \epsilon_w(\M)+\dim \M_w=\dim \M=n
\]
and since we know from above that $\dim \epsilon_w(\M)<n$, we deduce that $\M_w\neq 0$. It follows that there is some element $f$ of $\M^\times$ with $w\in \rad f$.

But $\M$ is a constant rank $m$ subspace of $\Symm(V)$, and since we are assuming that $|K|\geq m+1$, Theorem \ref{left_radical_right_radical} implies that
$\rad f$ is totally isotropic for $\M$. Thus $w\in I(\M)^\times$ and hence $\A_u$ is contained in $I(\M)$.

Finally, applying the assumption that $K$ has characteristic different from 2, Lemma
\ref{totally_isotropic_in_subspace} implies that $\A_u$ is totally isotropic for $\M$.
\end{proof}

The next lemma is elementary.

\begin{lemma} \label{A_w_property}

Let $\M$ be a subspace of $\Symm(V)$. Let $W$ be a subspace of $V$ that is totally isotropic for $\M$. Then $W$ is contained in $\A_w$ for each element $w$ of $W$.

\end{lemma}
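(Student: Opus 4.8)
The plan is to unwind the two relevant definitions and observe that the conclusion is immediate. Fix an arbitrary element $w$ of $W$; the goal is to show $W\subseteq \A_w$, which by Definition \ref{A_u_definition} amounts to verifying that every $v\in W$ satisfies $f(w,v)=0$ for all $f\in \M$.

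First I would take an arbitrary $v\in W$. Since both $w$ and $v$ lie in $W$, and $W$ is assumed totally isotropic for $\M$ in the sense of Definition \ref{totally_isotropic_subspace}, the defining property of total isotropy yields $f(w,v)=0$ for every $f\in \M$. This is precisely the membership condition for $v\in \A_w$. As $v$ was arbitrary in $W$, this establishes $W\subseteq \A_w$, and since $w$ was an arbitrary element of $W$, the claim holds for each such $w$.

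The main ``obstacle'' here is nil: the result is a direct consequence of matching the hypothesis (total isotropy gives $f(w,v)=0$ for all pairs drawn from $W$) against the definition of $\A_w$ (membership is exactly the vanishing of $f(w,\cdot)$ across $\M$). No use of the characteristic of $K$, no dimension counting, and no appeal to the constant rank machinery is needed, which is why the lemma is flagged as elementary. The only point worth stating explicitly is that total isotropy is a condition on \emph{all} pairs in $W$, so in particular it applies to the pair $(w,v)$ with $w$ fixed and $v$ ranging over $W$.
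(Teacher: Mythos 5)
Your proof is correct and is essentially identical to the paper's: both arguments simply apply the definition of total isotropy to the pair $(w,v)$ with $v$ ranging over $W$ and match this against the membership condition defining $\A_w$. Nothing further is needed.
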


\begin{proof}
Since $W$ is totally isotropic for $\M$,
\[
f(w,u)=0
\]
for all $u$ in $W$ and all $f$ in $\M$. This implies that $W\leq \A_w$, as required.
\end{proof}

We proceed to determine
$\dim \A_u$.

\begin{lemma} \label{dimension_of_A_u}
Let $\M$ be an $n$-dimensional constant rank $m$ subspace of $\Symm(V)$
and let $u$ be an element of $V^\times$.  Then $\dim \A_u=\dim \M_u$.

Suppose furthermore that $|K|\geq m+1$ and $K$ has characteristic different from $2$. Then
$\A_u$ is non-zero if and only if $u\in I(\M)^\times$.
\end{lemma}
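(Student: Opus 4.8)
The plan is to treat the two assertions separately: the equality $\dim \A_u=\dim \M_u$ follows from a purely dimension-theoretic count that crucially uses $\dim \M = n$, and the equivalence in the second assertion follows from that equality together with Theorem \ref{left_radical_right_radical}.

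For the first assertion I would start from the observation that, by Definition \ref{A_u_definition}, $\A_u$ is the annihilator in $V$ of the subspace $\epsilon_u(\M)$ of $V^*$. Under the canonical identification $V\cong V^{**}$, the annihilator in $V$ of a subspace $S\leq V^*$ has dimension $n-\dim S$, so $\dim \A_u=n-\dim \epsilon_u(\M)$. On the other hand, the rank--nullity theorem applied to the linear map $\epsilon_u:\M\to V^*$, whose kernel is $\M_u$, gives $\dim \M=\dim \M_u+\dim \epsilon_u(\M)$. This is the one point where the hypothesis $\dim \M=n$ enters: it yields $\dim \epsilon_u(\M)=n-\dim \M_u$, and substituting this into the previous relation gives $\dim \A_u=\dim \M_u$. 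No assumption on $|K|$ or on the characteristic is needed here.

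For the second assertion I would argue the two implications in turn. The direction $u\in I(\M)^\times\Rightarrow \A_u\neq 0$ is immediate from the definitions: if $f(u,u)=0$ for every $f\in \M$, then $u$ itself satisfies the defining condition of $\A_u$, so $u\in \A_u$ and hence $\A_u\neq 0$. For the converse, suppose $\A_u\neq 0$. The first assertion gives $\dim \M_u=\dim \A_u\geq 1$, so $\M_u\neq 0$, and thus there is a non-zero $f\in \M$ with $u\in \rad f$. Since $\M$ has constant rank $m$, this $f$ has rank $m$, so the hypothesis $|K|\geq m+1$ lets me apply Theorem \ref{left_radical_right_radical}: because $\rad_L f=\rad_R f=\rad f$ for symmetric forms, the subspace $\rad f$ is totally isotropic for $\M$. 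Taking both arguments equal to $u$ then gives $g(u,u)=0$ for every $g\in \M$, that is $u\in I(\M)$; as $u\neq 0$, we conclude $u\in I(\M)^\times$.

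I do not expect a serious obstacle, since the lemma is essentially a bookkeeping consequence of the double-annihilator relation, the equality $\dim \M=n$, and Theorem \ref{left_radical_right_radical}. The only step requiring care is keeping track of which hypothesis does the work: $\dim \M=n$ drives the dimension identity, while $|K|\geq m+1$ (through Theorem \ref{left_radical_right_radical}) drives the converse half of the equivalence. The characteristic hypothesis is available should one prefer to route the converse through Lemma \ref{key_lemma_isotropic}, which already records that $\A_u$ is totally isotropic, but it is not strictly required for the argument above.
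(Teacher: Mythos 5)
Your proof is correct, and while the first assertion and the forward implication match the paper exactly, your converse takes a genuinely different route. For the dimension equality the paper runs the same count: $\dim \epsilon_u(\M)=\dim\M-\dim\M_u=n-\dim\M_u$ by rank--nullity, and duality gives $\dim\A_u=n-\dim\epsilon_u(\M)=\dim\M_u$; and for $u\in I(\M)^\times\Rightarrow\A_u\neq 0$ the paper observes $u\in\A_u$ via Lemma \ref{A_w_property}, which is the same trivial check you make directly. Where you diverge is the implication $\A_u\neq 0\Rightarrow u\in I(\M)^\times$: the paper cites Lemma \ref{key_lemma_isotropic} (that $\A_u$ is totally isotropic for $\M$), and this imports the characteristic-different-from-$2$ hypothesis through Lemma \ref{totally_isotropic_in_subspace}; you instead bootstrap from the equality just proved, arguing $\A_u\neq 0\Rightarrow\M_u\neq 0$, then using the constant rank hypothesis to conclude that any non-zero $f\in\M_u$ has rank $m$, so Theorem \ref{left_radical_right_radical} with $u\in\rad f=\rad_L f=\rad_R f$ gives $g(u,u)=0$ for all $g\in\M$. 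Your version is more self-contained and, as you correctly note, establishes the equivalence assuming only $|K|\geq m+1$, so it exposes that the characteristic hypothesis in the lemma's statement is not actually needed for this conclusion (in the paper's route it serves only to upgrade the containment $\A_u\subseteq I(\M)$ to total isotropy, which is more than the lemma asserts). It is also slightly tighter logically: the paper's one-line converse implicitly needs $u$ itself, not just the elements of $\A_u$, to land in $I(\M)$, a step your argument handles explicitly, while the paper's formulation buys brevity by re-using a lemma already established for the subsequent partition results.
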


\begin{proof}
We have
\[
\dim \epsilon_u(\M)=\dim \M-\dim \M_u=n-\dim \M_u.
\]
Duality theory shows that the subspace of $V$ annihilated by $\epsilon_u(\M)$ has dimension $n-\dim \epsilon_u(\M)$ and this number equals $\dim \M_u$ by our equality above. Thus, $\dim \A_u=\dim \M_u$.

Suppose that $K$ has the properties described above and $\A_u$ is non-zero. Then $\A_u$ is totally isotropic for $\M$, by 
Lemma \ref{key_lemma_isotropic}. Thus $u\in I(\M)^\times$.
Conversely, suppose that $u\in I(\M)^\times$. Then $u\in \A_u$ by Lemma \ref{A_w_property}
and hence $\A_u$ is non-zero.
\end{proof}

The next in our sequence of lemmas enables us to partition $I(\M)$.

\begin{lemma} \label{partition_theorem}
Let $\M$ be an $n$-dimensional constant rank $m$ subspace of $\Symm(V)$. Suppose that $|K|\geq m+1$ and $K$ has characteristic different from $2$. Let
$u$ be an element of $I(\M)^\times$. Then we have $\A_w=\A_u$ for all elements $w$ of $\A_u^\times $.
\end{lemma}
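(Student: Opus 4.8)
The plan is to exploit a reciprocity between $\A_u$ and $\A_w$ coming from the symmetry of the forms in $\M$, combined with the fact (Lemma \ref{key_lemma_isotropic}) that $\A_x$ is totally isotropic for $\M$ for \emph{every} nonzero $x\in V$. The whole argument reduces to establishing the two inclusions $\A_u\subseteq\A_w$ and $\A_w\subseteq\A_u$ by applying Lemma \ref{A_w_property} in each direction.

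First I would record the standing facts. Since $u\in I(\M)^\times$, Lemma \ref{dimension_of_A_u} guarantees $\A_u\neq 0$ (so $\A_u^\times$ is genuinely non-empty and the claim is not vacuous), and Lemma \ref{key_lemma_isotropic} shows that $\A_u$ is totally isotropic for $\M$. Now fix an arbitrary $w\in\A_u^\times$. Applying Lemma \ref{A_w_property} to the totally isotropic subspace $W=\A_u$ and its element $w$ immediately yields the first inclusion $\A_u\subseteq\A_w$.

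For the reverse inclusion I would invoke symmetry. The membership $w\in\A_u$ says that $f(u,w)=0$ for all $f\in\M$; since each $f$ is symmetric, this is identical to $f(w,u)=0$ for all $f\in\M$, which is exactly the assertion $u\in\A_w$. Because $w\neq 0$, Lemma \ref{key_lemma_isotropic} applies again to show that $\A_w$ is totally isotropic for $\M$, and a second application of Lemma \ref{A_w_property}, this time to the totally isotropic subspace $W=\A_w$ and its element $u$, gives $\A_w\subseteq\A_u$. Combining the two inclusions produces $\A_w=\A_u$, as required.

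I do not expect a serious obstacle here; the proof is short and structural rather than computational. The single point demanding care is verifying that \emph{both} $\A_u$ and $\A_w$ are totally isotropic before each use of Lemma \ref{A_w_property}: for $\A_u$ this rests on the hypothesis $u\in I(\M)^\times$ (via Lemma \ref{dimension_of_A_u} to get nonvanishing and Lemma \ref{key_lemma_isotropic} for isotropy), whereas for $\A_w$ it suffices that $w\neq 0$, so that Lemma \ref{key_lemma_isotropic} applies directly. The conceptual crux is simply noticing that symmetry converts $w\in\A_u$ into $u\in\A_w$, which is what makes the two-sided application of Lemma \ref{A_w_property} possible.
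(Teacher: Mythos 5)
Your proof is correct and follows essentially the same route as the paper: both arguments obtain $\A_u\subseteq\A_w$ from Lemma \ref{key_lemma_isotropic} and Lemma \ref{A_w_property}, then use the symmetry of the forms to convert $w\in\A_u$ into $u\in\A_w$ and apply the same two lemmas again for the reverse inclusion. Your extra remark that $\A_u\neq 0$ via Lemma \ref{dimension_of_A_u} is a harmless addition; otherwise the arguments coincide.
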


\begin{proof}
Let $w$ be an element of $\A_u^\times $. Since $\A_u$ is totally isotropic for $\M$ by Lemma \ref{key_lemma_isotropic}, $\A_u\leq \A_w$ by Lemma \ref{A_w_property}. 

Furthermore, since $\epsilon_u(\M)$ annihilates $w$, $\epsilon_w(\M)$ annihilates $u$
by symmetry and thus $u\in \A_w$. Since $\A_w$ is also totally isotropic for $\M$,
$\A_w\leq \A_u$, again by Lemma \ref{A_w_property}. Thus $\A_w=\A_u$, as stated.
\end{proof}

\begin{corollary} \label{statement_of_partition_property}

Let $\M$ be an $n$-dimensional constant rank $m$ subspace of $\Symm(V)$. Suppose  that $|K|\geq m+1$ and $K$ has characteristic different from $2$. 
Then $I(\M)$ is the union of the subspaces $\A_u$, where $u$ runs through the  elements of $I(\M)^\times$. Two subspaces $\A_u$ and $\A_w$ are either identical or
$\A_u\cap \A_w=0$.

\end{corollary}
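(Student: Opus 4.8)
The plan is to prove the two assertions of the corollary in turn, in each case assembling lemmas already established; throughout, the hypotheses $|K|\geq m+1$ and $K$ of characteristic different from $2$ are exactly what make Lemmas \ref{key_lemma_isotropic}, \ref{dimension_of_A_u} and \ref{partition_theorem} available, so no fresh estimate will be needed.

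For the union statement I would argue by double inclusion. To see that $\A_u\subseteq I(\M)$ for each $u\in I(\M)^\times$, note that Lemma \ref{key_lemma_isotropic} says $\A_u$ is totally isotropic for $\M$, and by the remark following Definition \ref{isotropic_points} any totally isotropic subspace lies inside $I(\M)$; taking the union over all such $u$ gives one inclusion. For the reverse inclusion I would show that each $u\in I(\M)^\times$ lies in its own $\A_u$: since $u\in I(\M)^\times$ the line spanned by $u$ is totally isotropic for $\M$, so Lemma \ref{A_w_property} (applied with $W$ this line and $w=u$) yields $u\in \A_u$. As $0$ belongs to every $\A_u$ as well, every element of $I(\M)$ lies in some $\A_u$, and the two inclusions combine to give that $I(\M)$ is the union of the subspaces $\A_u$ with $u\in I(\M)^\times$.

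For the partition statement I would suppose $\A_u\cap \A_w\neq 0$ for some $u,w\in I(\M)^\times$ and deduce that the two subspaces coincide. Choosing a non-zero vector $v\in \A_u\cap \A_w$, I have $v\in \A_u^\times$, so Lemma \ref{partition_theorem} (with base point $u\in I(\M)^\times$) gives $\A_v=\A_u$; symmetrically $v\in \A_w^\times$ forces $\A_v=\A_w$. Hence $\A_u=\A_v=\A_w$, so any two members of the family are either equal or meet only in $0$, as claimed.

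The argument is short precisely because the substantive content has been front-loaded into the preceding lemmas, so there is no real obstacle beyond keeping track of which hypotheses each invocation requires. The one step that genuinely carries the weight is the identity $\A_v=\A_u$ drawn from Lemma \ref{partition_theorem}: this is where the total isotropy of $\A_u$ and the symmetry of the forms are used, and hence where the conditions on $|K|$ and on the characteristic enter the present proof.
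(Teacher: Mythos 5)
Your proposal is correct and follows essentially the same route as the paper: containment $\A_u\subseteq I(\M)$ via Lemma \ref{key_lemma_isotropic}, the inclusion $u\in \A_u$ via Lemma \ref{A_w_property} applied to the totally isotropic line spanned by $u$, and the trivial-or-equal dichotomy by applying Lemma \ref{partition_theorem} twice to a common non-zero vector $v\in \A_u\cap \A_w$. Your write-up is merely a little more explicit than the paper's in spelling out why $u\in\A_u$, but the substance is identical.
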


\begin{proof}
Let $u$ be an element $I(\M)^\times$. We have seen that $\A_u$ is non-zero and contained in $I(\M)$ by Lemma \ref{key_lemma_isotropic}. Since $u\in \A_u$, $I(\M)$ is the union of subspaces of type $\A_u$.

Consider now a second subspace $\A_w$. Suppose $v$ is a non-zero vector in
$\A_u\cap \A_w$. Then $v\in \A_u$ and thus $\A_v=\A_u$ by Lemma \ref{partition_theorem}.
Likewise, $v\in \A_w$ and thus $\A_v=\A_w$. Therefore, $\A_u=\A_w$. 
\end{proof}

We shall assume for the rest of this section that the underlying field $K$ is finite of
odd characteristic.
Corollary \ref{statement_of_partition_property} implies that in this case
$I(\M)$ is the union of a finite number, $r$, say, of subspaces $\A_w$ which intersect trivially pairwise. Thus $I(\M)^\times$ is a disjoint union of $r$ subsets $\A_i^\times$, where
$\A_i=\A_{u_i}$, $1\leq i\leq r$. 

If $\A_u$ is one of the $\A_i$, we have $\dim \A_u=\dim \M_u=d(u)$, say, where
$d(u)>0$, by Lemma \ref{dimension_of_A_u}. Thus if we set $d_i=d(u_i)$, we have
\[
|I(\M)^\times|=\sum_{i=1}^r (q^{d_i}-1).
\]
Retaining this notation, we have the following  result.

\begin{theorem} \label{key_counting_theorem}
Let $\M$ be an $n$-dimensional constant rank $m$ subspace of $\Symm(V)$ and $K=\mathbb{F}_q$, where $q$ is odd and at least $m+1$.
Then if
\[
I(\M)^\times=\bigcup _{i=1}^r \A_i^\times,
\]
where $|\A_i^\times|=q^{d_i}-1$, we have
\[
\sum_{i=1}^r (q^{d_i}-1)^2=(q^n-1)(q^{n-m}-1).
\]
\end{theorem}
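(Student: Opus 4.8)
The plan is to combine the basic counting identity of Lemma \ref{constant_rank_dimension_lemma} with the partition of $I(\M)$ established in Corollary \ref{statement_of_partition_property}, reorganising the single sum in that identity so that it becomes a sum of squares.

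First I would apply Lemma \ref{constant_rank_dimension_lemma} to $\M$. Since $\M$ is $n$-dimensional and consists of symmetric forms, we have $d=n$ and $\M_u^L=\M_u$, so the identity reads
\[
(q^n-1)(q^{n-m}-1)=\sum_{u\neq 0}(q^{d(u)}-1),
\]
where $d(u)=\dim \M_u$. Next I would observe that the terms with $d(u)=0$ contribute $q^0-1=0$, so the sum is effectively taken over those $u$ with $\M_u\neq 0$. By Lemma \ref{dimension_of_A_u} we have $\dim \A_u=\dim \M_u=d(u)$, and $\A_u$ is non-zero precisely when $u\in I(\M)^\times$; hence the surviving indices are exactly the elements of $I(\M)^\times$, giving
\[
(q^n-1)(q^{n-m}-1)=\sum_{u\in I(\M)^\times}(q^{d(u)}-1).
\]

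The main step is then to pass from this linear sum to the quadratic expression in the statement, using the block structure $I(\M)^\times=\bigcup_{i=1}^r \A_i^\times$ from Corollary \ref{statement_of_partition_property}, the blocks being pairwise disjoint. For a fixed $i$ and any $u\in \A_i^\times$, Lemma \ref{partition_theorem} gives $\A_u=\A_i$, so $d(u)=\dim \A_u=d_i$ is constant on the block. Splitting the sum over $I(\M)^\times$ along the blocks, each block $\A_i^\times$ contributes $|\A_i^\times|$ equal terms $q^{d_i}-1$; since $|\A_i^\times|=q^{d_i}-1$, the contribution of the block is $(q^{d_i}-1)^2$. Summing over $i$ yields
\[
\sum_{i=1}^r (q^{d_i}-1)^2=(q^n-1)(q^{n-m}-1),
\]
which is the desired identity.

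There is no serious obstacle here: the argument is a reorganisation of an already established counting identity, and every ingredient — the vanishing of the $d(u)=0$ terms, the equality $\dim \A_u=\dim \M_u$, and the constancy of $d(u)$ on each block $\A_i^\times$ — has been prepared in the preceding lemmas. The only point requiring genuine care is to confirm that $d(u)$ really is constant on each $\A_i^\times$, so that the linear sum collapses into a sum of squares rather than a heterogeneous sum; this is exactly what Lemma \ref{partition_theorem} together with Lemma \ref{dimension_of_A_u} delivers.
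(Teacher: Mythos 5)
Your proposal is correct and takes essentially the same route as the paper's own proof: both start from the counting identity of Lemma \ref{constant_rank_dimension_lemma}, restrict the sum to $I(\M)^\times$, and collapse it along the partition into the blocks $\A_i^\times$, on each of which $d(u)=d_i$ is constant, so that each block contributes $(q^{d_i}-1)^2$. Your write-up merely makes explicit the small justifications the paper leaves implicit, namely the vanishing of the $d(u)=0$ terms and the chain $\dim \A_u=\dim \M_u$ with constancy on blocks via Lemma \ref{partition_theorem} and Lemma \ref{dimension_of_A_u}.
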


\begin{proof}
Lemma \ref{constant_rank_dimension_lemma} shows that
\[
\sum_{u\neq 0} (q^{d(u)}-1)=(q^n-1)(q^{n-m}-1),
\]
where $d(u)=\dim \M_u$ and we need only to sum over the elements
$u$ of $I(\M)^\times$. We evaluate the sum on the left by counting over the subsets
$\A_i^\times$ which partition $I(\M)^\times$.

The subset contains $\A_i^\times$ contains $q^{d_i}-1$ elements $u$, for each of which
$d(u)=d_i$. Thus the contribution of the elements of $\A_i^\times$ to the sum is
$(q^{d_i}-1)^2$. Summing over all $i$, we obtain
\[
\sum_{i=1}^r (q^{d_i}-1)^2=(q^n-1)(q^{n-m}-1),
\]
as required.
\end{proof}

The next result shows that $I(\M)$ cannot be a subspace in the circumstances of
Theorem \ref{key_counting_theorem}. 

\begin{lemma} \label{r=1_case}

We cannot have $r=1$ in Theorem \ref{key_counting_theorem}.

\end{lemma}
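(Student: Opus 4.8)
The plan is to suppose $r=1$ and show that the single equation this produces has no admissible solution. When $r=1$ the identity of Theorem \ref{key_counting_theorem} collapses to
\[
(q^{d_1}-1)^2=(q^n-1)(q^{n-m}-1),
\]
where $d_1=\dim \A_1\geq 1$, the inequality holding because $\A_1=\A_{u_1}$ contains the non-zero vector $u_1$. We may assume $m<n$: if $m=n$ the right-hand side of the counting identity already vanishes, and since every $d_i$ is positive this forces $r=0$, so the case $r=1$ cannot arise there. Thus the whole problem reduces to showing the displayed equation in the integer $d_1$ is impossible when $1\le m<n$.

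My method would be a comparison of the exact power of $q$ dividing each side, which is exactly where the oddness of $q$ enters. Expanding both sides and cancelling the common constant term rewrites the equation as
\[
q^{d_1}(q^{d_1}-2)=q^{n-m}(q^n-q^m-1).
\]
Since $q$ is odd we have $q\nmid 2$, so $q^{d_1}-2\equiv -2$ is a unit modulo $q$; and since $1\le m<n$ we have $q^n-q^m-1\equiv -1$, also a unit modulo $q$. Hence the exact power of $q$ dividing the left side is $q^{d_1}$ and the exact power dividing the right side is $q^{n-m}$, which forces $d_1=n-m$.

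Substituting $d_1=n-m$ back into the equation gives $(q^{n-m}-1)^2=(q^n-1)(q^{n-m}-1)$; cancelling the positive factor $q^{n-m}-1$ leaves $q^{n-m}-1=q^n-1$, that is $m=0$, contradicting $m\ge 1$. I expect the only delicate point to be the valuation step: one must verify that the two cofactors $q^{d_1}-2$ and $q^n-q^m-1$ are genuinely prime to $q$, and it is precisely the hypothesis that $q$ is odd (so that $q\nmid 2$), together with $m\ge 1$, that makes this hold. Everything else is elementary algebra.
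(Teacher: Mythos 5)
Your proof is correct and takes essentially the same approach as the paper: both expand $(q^{d_1}-1)^2=(q^n-1)(q^{n-m}-1)$ after cancelling the constant terms, compare the exact powers of $q$ dividing the two sides (which is where the oddness of $q$ is used, via $q\nmid 2$) to force $d_1=n-m$, and rule out $d_1=n-m$ by direct substitution. The paper merely performs the two steps in the opposite order, first eliminating $d=n-m$ by substitution and then obtaining it from the valuation comparison, so the arguments are the same in substance.
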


\begin{proof}
Suppose if possible that $r=1$ in Theorem \ref{key_counting_theorem}. Then setting
$d=d_1$, we obtain
\[
(q^d-1)^2=(q^n-1)(q^{n-m}-1).
\]

A simple substitution in the formula above shows that $d$ cannot equal $n-m$. Thus $d>n-m$.
Expansion of the formula above yields
\[
q^{2d}-2q^d=q^{2n-m}-q^n-q^{n-m}.
\]
Since we are assuming that $q$ is odd, the power of $q$ dividing the left hand side is $q^d$, and the power of $q$ dividing the right hand side is $q^{n-m}$. 

This implies that $d=n-m$, a case we have already eliminated. Consequently, $r=1$ is impossible.
\end{proof}

We turn to the proof of the main theorem of this section.

\begin{theorem} \label{the_main_dimension_bound}

Let $\M$ be a constant rank $m$ subspace of $\Symm(V)$ and  $K=\mathbb{F}_q$, where $q$ is odd and at least $m+1$. Then if $m\leq 2n/3$, we have
$\dim \M<n$.

\end{theorem}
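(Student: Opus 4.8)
The plan is to argue by contradiction against the bound already secured by Theorem~\ref{symmetric_constant_rank_bound}. That theorem gives $\dim\M\le n$ under the present hypotheses, so it suffices to exclude the extremal case $\dim\M=n$. Assuming $\dim\M=n$, the entire apparatus of the previous section becomes available: by Corollary~\ref{statement_of_partition_property} the set $I(\M)$ is a disjoint union (pairwise trivial intersection) of subspaces $\A_1,\dots,\A_r$, each totally isotropic for $\M$ by Lemma~\ref{key_lemma_isotropic}, with $\dim\A_i=\dim\M_{u_i}=d_i$ by Lemma~\ref{dimension_of_A_u}. Theorem~\ref{key_counting_theorem} then supplies the identity
\[
\sum_{i=1}^r (q^{d_i}-1)^2=(q^n-1)(q^{n-m}-1),
\]
while Lemma~\ref{r=1_case} guarantees $r\ge 2$. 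The whole task is to show that this identity is arithmetically impossible once $m\le 2n/3$.

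The first ingredient is a two-sided estimate for the $d_i$. On the one hand, each $u_i\in I(\M)^\times$ lies in the radical of some non-zero form, which has rank $m$ by the constant rank hypothesis, so $\M_{u_i}$ contains an element of rank $m$ and Lemma~\ref{improved_M_u_dimension_bound} yields $d_i\ge \dim\M-m=n-m$. On the other hand, fixing a rank-$m$ form $f\in\M$, the subspace $\A_i$ is totally isotropic for $f$; its image in $V/\rad f$ is then totally isotropic for the nondegenerate induced form and so has dimension at most the Witt index $\lfloor m/2\rfloor$, while $\A_i\cap\rad f$ has dimension at most $n-m$. Hence $d_i\le (n-m)+\lfloor m/2\rfloor$. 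The role of the hypothesis now becomes transparent, since $m\le 2n/3$ is equivalent to $n-m\ge\lfloor m/2\rfloor$, and therefore
\[
n-m\le d_i\le 2(n-m)
\]
for every $i$. In other words, the radical dimension $n-m$ dominates the Witt index, and this is precisely the regime in which the leading powers of $q$ are expected to clash.

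The concluding step is a divisibility analysis of the identity in the spirit of the proofs of Theorem~\ref{finite_constant_rank} and Lemma~\ref{r=1_case}. Writing $d_i=(n-m)+e_i$ with $0\le e_i\le\lfloor m/2\rfloor\le n-m$ and expanding the squares, one isolates the lowest powers of $q$ to recover $r\equiv 1\pmod{q^{\,n-m}}$, whence $r\ge q^{\,n-m}+1$ because $r\ge 2$; one then turns to the highest powers. Since $e_i\le n-m$, every exponent appearing on the left lies in a controlled range, and the dominant term $q^n$ on the right can be matched on the left only through contributions with $e_i=\lfloor m/2\rfloor$; confronting the coefficient of this top power with the forced multiplicity $r\ge q^{\,n-m}+1$ is what I expect to deliver the contradiction. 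The main obstacle is exactly this final bookkeeping: one must follow the base-$q$ expansion of both sides with care, and the parity of $m$ should be treated separately, since for $m$ odd the maximal exponent $n-m+2\lfloor m/2\rfloor$ equals $n-1$ rather than $n$, altering how the term $q^n$ is reached. When $n\ge 2m+1$ one can shortcut part of the argument via Theorem~\ref{equality_of_left_radicals}, but the residual range $3m/2\le n\le 2m$ must be settled through the counting identity itself.
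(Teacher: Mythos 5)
Your setup is faithful to the paper's opening moves (contradiction with $\dim \M=n$, the identity of Theorem~\ref{key_counting_theorem}, $r\geq 2$ from Lemma~\ref{r=1_case}, the lower bound $d_i\geq n-m$ via Lemma~\ref{improved_M_u_dimension_bound}, and the congruence $r\equiv 1 \pmod{q^{n-m}}$), and your two-sided bound $d_i\leq (n-m)+\lfloor m/2\rfloor$ is correct and does not appear in the paper. But the step you defer as ``final bookkeeping'' is not merely unfinished; it cannot work as described, because the counting identity together with \emph{all} of your constraints admits a genuine solution: take every $d_i$ equal to $n-m$ and $r=(q^n-1)/(q^{n-m}-1)$, which is an integer congruent to $1$ modulo $q^{n-m}$ whenever $(n-m)\mid n$, and the case $n=2m$ sits squarely inside your residual range $3m/2\leq n\leq 2m$. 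Concretely, for $q=3$, $m=2$, $n=4$ the identity reads $64c_0+676c_1=640$, solved by $c_0=10=q^2+1$, $c_1=0$, with all constraints satisfied. So no base-$q$ expansion of the single identity can yield the contradiction; additional geometric input is unavoidable. (Two smaller inaccuracies: the dominant term on the right-hand side is $q^{2n-m}$, not $q^n$, and it can be matched on the left through sheer multiplicity $r\sim q^{n-m}$ as well as through top exponents; and the hypothesis $m\leq 2n/3$ does not enter the paper via your Witt-index domination $n-m\geq \lfloor m/2\rfloor$.)

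The paper closes the argument with exactly two inputs your outline lacks. First, the all-$d_i=n-m$ configuration is excluded geometrically, not arithmetically: it would force $|I(\M)^\times|=r(q^{n-m}-1)=q^n-1$, i.e.\ every vector isotropic for $\M$, impossible in odd characteristic for $\M\neq 0$. This exclusion, combined with $r\geq q^{n-m}+1$, upgrades the count of isotropic vectors to the strict lower bound $|I(\M)^\times|>q^{2(n-m)}-1$ --- note that the quantity $|I(\M)^\times|$, which you never use, is the real protagonist. Second, and decisively, the paper invokes Theorem 5 of \cite{DGS}: for $m=2k$ even and $\dim \M=n$ one has $|I(\M)^\times|=(A-B)q^{-k}$ with $A+B=q^n-1$, whence $|I(\M)^\times|\leq q^{n-k}-1$. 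Comparing the two bounds gives $2(n-m)<n-m/2$, i.e.\ $n<3m/2$, contradicting $m\leq 2n/3$; this is where the hypothesis actually acts. Odd $m$ is dispatched separately by Corollary 3 of \cite{DGS}, which gives $\dim \M\leq m$ outright. The external character-sum result of \cite{DGS} is a substantive ingredient about $n$-dimensional spaces of symmetric forms over finite fields and is not recoverable from the partition identity, so your proposal has a genuine gap at its core rather than a routine computation left to do.
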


\begin{proof}

Suppose by way of contradiction that $\dim \M=n$. We may then apply Theorem 
\ref{key_counting_theorem}. Let the distinct dimensions $d_i$ that occur be $e_1$, \dots, $e_t$, where
\[
n-m\leq e_1<\ldots <e_t.
\]
Suppose that dimension $e_i$ occurs with multiplicity $c_i$ (so that $r=c_1+ \cdots +c_t$). Then we have 
\[
\sum_{i=1}^t c_i(q^{e_i}-1)^2=(q^n-1)(q^{n-m}-1), \quad |I(\M)^\times|=\sum_{i=1}^t c_i(q^{e_i}-1).
\]

Let us first show that we cannot have $t=1$ and $e_1=n-m$. For if we take
$t=1$ and $e_1=n-m$, we obtain
\[
c_1(q^{n-m}-1)^2=(q^n-1)(q^{n-m}-1), \quad |I(\M)^\times|=c_1(q^{n-m}-1).
\]
These equations imply that $|I(\M)^\times|=q^n-1$, a clear contradiction since it means that every vector in $V$ is isotropic for $\M$. Thus our statement is established.

Expanding the first equation involving $(q^n-1)(q^{n-m}-1)$, we obtain
\[
\sum_{i=1}^t c_i(q^{2e_i}-2q^{e_i}+1)=q^{2n-m}-q^n-q^{n-m}+1.
\]
Now as $e_i\geq n-m$ for all $i$, $q^{n-m}$ divides $q^{2e_i}$, $q^{e_i}$, $q^{2n-m}$
and $q^n$. It follows that $q^{n-m}$ divides
\[
(\sum_{i=1}^t c_i)-1=r-1.
\]
Since we have shown that $r>1$ in Lemma \ref{r=1_case}, we have then
\[
(\sum_{i=1}^t c_i)=\lambda q^{n-m}+1
\]
for  some positive integer $\lambda$. 

Now as we already know that all $e_i$ satisfy $e_i\geq n-m$, and moreover that not all
$e_i$ equal $n-m$, we deduce that
\[
|I(\M)^\times|>(\lambda q^{n-m}+1)(q^{n-m}-1)
\]
and thus $|I(\M)^\times|>q^{2(n-m)}-1$. 

Now we can certainly assume that $m$ is even, since if $m$ is odd, a constant rank
$m$ subspace of $\Symm(V)$ has dimension at most $m$, and thus our theorem is trivially true. See, for example, Corollary 3  of \cite{DGS}.

We therefore set $m=2k$, where $k$ is a positive integer. Then since $\dim \M=n$, Theorem 5 of \cite{DGS} implies that
\[
|I(\M)^\times|=(A-B)q^{-k},
\]
where $A$ is the number of elements of Witt index $k$ in $\M$, $B$ is the number of elements of Witt index $k-1$ in $\M$, and $A+B=q^n-1$. Thus, since $|I(\M)^\times|$ is an integer, we have $|I(\M)^\times|\leq q^{n-k}-1$. 

If we now compare this upper bound with our earlier lower bound for $|I(\M)^\times|$, we obtain
\[
q^{2n-2m}-1<q^{n-k}-1
\]
and deduce that $2n-2m<n-k=n-m/2$. This yields $n<3m/2$, and contradicts our original hypothesis that $n\geq 3m/2$. We therefore conclude that $\dim \M<n$, as required.
\end{proof}

\section{Dimension bounds for subspaces of alternating forms}

\noindent We begin this section by proving a sharpened version of Lemma \ref{M_u_dimension_bound} for subspaces of $\Alt(V)$. 

\begin{lemma} \label{M_u_alternating_bound} 

Let $\M$ be a subspace of $\Alt(V)$ and let $u$ be an element of $V$. Then we have
$\dim \M_u\geq \dim \M-(n-1)$.

\end{lemma}

\begin{proof}
The elements of $\M$ are alternating by hypothesis and hence   $\epsilon_u(\M)$ vanishes on $u$. It follows that $\epsilon_u(\M)\neq V^*$. Thus $\dim \epsilon_u(\M)\leq n-1$ and the inequality
for $\dim \M_u$ is a consequence of the rank-nullity theorem.
\end{proof}

The next result enables us to establish dimension bounds for subspaces $\M$ of $\Alt(V)$ by induction on $|\rank(\M)|$.

\begin{theorem} \label{alternating_dimension_bound}

Let $\M$ be a non-zero subspace of $\Alt(V)$ and let $m$ be the largest integer in $\rank(\M)$. Suppose that $|K|\geq m+1$ and for each element $w$ of $V$, $\M_w$ contains an element of rank $m$.

Then there exist elements $u$ and $v$ in $V$ such that 
$\M_u\cap \M_v$ contains no elements of rank $m$ and the inequality 
\[
\dim \M\leq 2m-1+\dim (\M_u\cap \M_v)
\]
holds. Furthermore, if $U$ is the two-dimensional subspace of $V$ spanned by $u$ and $v$, and $W$ is a complement of $U$ in $V$, we may identify $\M_u\cap \M_v$ with a subspace of $\Alt(W)$.

\end{theorem}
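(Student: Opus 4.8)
The plan is to realize the two required conclusions---that $\M_u\cap\M_v$ avoids rank $m$, and that $\dim\M\le 2m-1+\dim(\M_u\cap\M_v)$---simultaneously, from a single well-chosen pair $(u,v)$. The engine for the dimension bound is the combined evaluation map $\Phi\colon\M\to V^*\oplus V^*$, $f\mapsto(\epsilon_u(f),\epsilon_v(f))$, whose kernel is exactly $\M_u\cap\M_v$, so that $\dim\M=\dim(\M_u\cap\M_v)+\dim\Phi(\M)$. I would control $\dim\Phi(\M)$ in two ways. First, since by hypothesis both $\M_u$ and $\M_v$ contain an element of rank $m$, Lemma \ref{improved_M_u_dimension_bound} gives $\dim\epsilon_u(\M)\le m$ and $\dim\epsilon_v(\M)\le m$, so the containment $\Phi(\M)\subseteq\epsilon_u(\M)\oplus\epsilon_v(\M)$ already bounds $\dim\Phi(\M)$ by $2m$. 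Second, the alternating identity $f(u,v)+f(v,u)=0$ forces $\Phi(\M)$ into the kernel of the linear functional $L(\alpha,\beta)=\alpha(v)+\beta(u)$ on $V^*\oplus V^*$, which trims one further dimension provided $L$ is not identically zero on $\epsilon_u(\M)\oplus\epsilon_v(\M)$.

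To make $L$ nontrivial and, at the same time, to kill rank $m$ in the intersection, I would choose $u$ and $v$ as follows. Since $\M\neq 0$, pick $f\in\M$ and vectors $u,v$ with $f(u,v)\neq 0$; because $f$ is alternating this already forces $u$ and $v$ to be linearly independent, so $U=\langle u,v\rangle$ is two-dimensional, and it makes $L$ nonzero on the point $(\epsilon_u(f),0)$. The crucial observation is that $f(u,v)\neq 0$ says precisely that $v$ does not lie in the annihilator $S_u=\{w\in V:h(u,w)=0\text{ for all }h\in\M\}$ of $\epsilon_u(\M)$. On the other hand, Theorem \ref{left_radical_right_radical} applied to any rank $m$ element $g$ of $\M_u$ (such $g$ has $u\in\rad g$, and $m$ is the top rank) shows that $\rad g\subseteq S_u$. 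Hence a rank $m$ element lying in $\M_u\cap\M_v$ would have $v\in\rad g\subseteq S_u$, contradicting $v\notin S_u$; this yields the first conclusion, that $\M_u\cap\M_v$ contains no element of rank $m$.

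Assembling these, $\Phi(\M)$ is contained in the kernel of $L$ restricted to $\epsilon_u(\M)\oplus\epsilon_v(\M)$, a space of dimension at most $2m$ on which $L$ is nonzero, so $\dim\Phi(\M)\le 2m-1$ and the inequality follows. For the last assertion, every $h\in\M_u\cap\M_v$ has $U\subseteq\rad h$, so $h$ vanishes on $U\times V$ and on $V\times U$; restricting to a complement $W$ of $U$ gives an injective map $\M_u\cap\M_v\to\Alt(W)$, identifying $\M_u\cap\M_v$ with a subspace of $\Alt(W)$. I expect the main obstacle to be the bookkeeping that lets one pair $(u,v)$ serve both purposes: the real content is recognizing that the condition $f(u,v)\neq 0$ (needed to make $L$ bite, and hence to gain the $-1$ that turns $2m$ into $2m-1$) is the very same condition $v\notin S_u$ that, through the containment $\rad g\subseteq S_u$ coming from Theorem \ref{left_radical_right_radical}, forces rank $m$ out of the intersection.
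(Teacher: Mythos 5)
Your proof is correct and takes essentially the same route as the paper: the same choice of $u,v$ with $f(u,v)\neq 0$ (whence linear independence), the same application of Theorem \ref{left_radical_right_radical} to exclude rank $m$ from $\M_u\cap\M_v$, and the same input $\dim \epsilon_u(\M)\leq m$, $\dim \epsilon_v(\M)\leq m$ from Lemma \ref{improved_M_u_dimension_bound}. The only difference is packaging: you count on the image side, bounding $\dim \Phi(\M)\leq 2m-1$ via the functional $L(\alpha,\beta)=\alpha(v)+\beta(u)$, whereas the paper counts on the kernel side via the strict inequality $\dim(\M_u+\M_v)<\dim \M$ forced by the functional $h\mapsto h(u,v)$ vanishing on $\M_u+\M_v$ but not at $g$ --- dual formulations of the identical computation.
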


\begin{proof}
Since $\M$ is non-zero, there must exist $u$ and $v$ in $V$ such that $g(u,v)\neq 0$ for some $g\in \M$. We note that $u$ and $v$ are necessarily linear independent.

We will now show that $\M_u\cap \M_v$ contains no elements of rank $m$. For suppose that some form $f$ of rank $m$ is in
$\M_u\cap \M_v$. Then $u$, $v$ are in $\rad f$ and hence as $|K|\geq m+1$, Theorem
\ref{left_radical_right_radical} implies that $h(u,v)=0$ for all $h\in\M$. This contradicts our earlier assertion about the existence of $g$ in $\M$ with $g(u,v)\neq 0$. We deduce that $\M_u\cap \M_v$ contains no elements of rank $m$, as asserted.

We note that each element $\phi$, say, of the subspace $\M_u+\M_v$ of $\M$ satisfies
$\phi(u,v)=0$, since $u$ is in the radical of each element of $\M_u$ and
$v$ is in the radical of each element of $\M_v$. In view of our earlier statement about the existence of $g$ in $\M$ with $g(u,v)\neq 0$, we see that $\M_u+\M_v\neq \M$ and hence $\dim(\M_u+\M_v)<\dim \M$. It follows that
\[
\dim \M_u+\dim \M_v<\dim \M+\dim(\M_u\cap \M_v).
\]

Now since $\M_u$ and $\M_v$ both contain elements of rank $m$, Lemma \ref{improved_M_u_dimension_bound} shows that
\[
\dim \M-m\leq \dim \M_u, \quad \dim \M-m\leq \dim \M_v.
\]
Thus we have
\[
2(\dim \M-m)<\dim \M+\dim(\M_u\cap \M_v)
\]
and consequently
the inequality 
\[
\dim \M\leq 2m-1+\dim (\M_u\cap \M_v)
\]
is valid.

Finally, $U$ is contained in the radical of each element of $\M_u\cap \M_v$. It follows that if $W$ is a complement of $U$ in $V$, each element of $\M_u\cap \M_v$ 
is determined by its restriction to $W\times W$ and thus we may identify $\M_u\cap \M_v$ with a subspace of $\Alt(W)$.
\end{proof}

\begin{theorem} \label{alternating_constant_rank_bound}

Let $\M$ be a non-zero constant rank $m$ subspace of $\Alt(V)$.  Then if $|K|\geq m+1$, we have $\dim \M\leq \max\,(n-1, 2m-1)$. 
\end{theorem}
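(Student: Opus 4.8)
The plan is to prove the bound $\dim \M\leq \max\,(n-1,2m-1)$ for a non-zero constant rank $m$ subspace $\M$ of $\Alt(V)$ by splitting into two cases according to whether the hypothesis of Theorem \ref{alternating_dimension_bound} is satisfied. The key dichotomy is whether there exists an element $w$ of $V$ for which $\M_w$ contains no element of rank $m$.

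First I would dispose of the easier case. Suppose there is some $w\in V$ such that $\M_w$ contains no element of rank $m$. Since $\M$ is a constant rank $m$ subspace, the only way $\M_w$ can avoid elements of rank $m$ is if $\M_w=0$. Applying Lemma \ref{M_u_alternating_bound}, which gives $\dim \M_w\geq \dim \M-(n-1)$, we immediately obtain $\dim \M\leq n-1$, and we are done in this case.

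For the main case, I would assume that for every $w\in V$ the subspace $\M_w$ contains an element of rank $m$; this is exactly the hypothesis needed to invoke Theorem \ref{alternating_dimension_bound}. That theorem produces vectors $u,v\in V$ such that $\M_u\cap \M_v$ contains no element of rank $m$ and
\[
\dim \M\leq 2m-1+\dim(\M_u\cap \M_v),
\]
and moreover $\M_u\cap \M_v$ may be identified with a subspace of $\Alt(W)$, where $W$ is a complement of the two-dimensional span of $u$ and $v$. The point now is that $\M_u\cap \M_v$, viewed inside $\Alt(W)$, is a subspace of alternating forms containing no element of rank $m$. Since $\M$ has constant rank $m$, every non-zero form in $\M_u\cap \M_v$ has rank $m$ as a form on $V$; but each such form has $u$ and $v$ in its radical, so its restriction to $W$ also has rank $m$. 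The only way for $\M_u\cap \M_v$ to contain no rank-$m$ element is therefore for $\M_u\cap \M_v=0$. Substituting $\dim(\M_u\cap \M_v)=0$ into the displayed inequality yields $\dim \M\leq 2m-1$.

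Combining the two cases, $\dim \M\leq \max\,(n-1,2m-1)$, as required. I expect the main subtlety to lie in the second case, specifically in justifying cleanly that ``$\M_u\cap \M_v$ contains no element of rank $m$'' forces $\M_u\cap \M_v=0$: one must be careful that the rank of a form in $\M_u\cap \M_v$ computed on $W$ agrees with its rank on $V$, which follows because $u,v$ lie in the radical so the form is supported on $W$. Once that identification is made the conclusion is immediate, so the real content is entirely packaged into the prior Theorem \ref{alternating_dimension_bound} and Lemma \ref{M_u_alternating_bound}.
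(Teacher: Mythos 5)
Your proof is correct and follows essentially the same route as the paper: the same dichotomy on whether some $\M_w$ contains a rank-$m$ element (equivalently, whether $\M_w=0$, by constant rank), with Lemma \ref{M_u_alternating_bound} handling the first case and Theorem \ref{alternating_dimension_bound} plus the forced vanishing of $\M_u\cap \M_v$ handling the second. Your worry about comparing ranks on $W$ versus $V$ is unnecessary: Theorem \ref{alternating_dimension_bound} asserts that $\M_u\cap \M_v$ contains no element of rank $m$ as forms on $V$, so the constant rank hypothesis gives $\M_u\cap \M_v=0$ directly, without reference to the identification with a subspace of $\Alt(W)$.
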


\begin{proof}
Suppose that for some $w$ in $V$, $\M_w=0$. Then it follows from Lemma \ref{M_u_alternating_bound} that $\dim \M\leq n-1$, and there is nothing more to prove.

We may therefore assume that $\M_w\neq 0$ for all $w\in V$. It follows from Theorem
\ref{alternating_dimension_bound} that there exist elements $u$ and $v$ in $V$ such that 
$\M_u\cap \M_v$ contains no elements of rank $m$. In view of the constant rank $m$ hypothesis, this implies that $\M_u\cap \M_v=0$ and hence
\[
\dim \M\leq 2m-1,
\]
by Theorem \ref{alternating_dimension_bound}, as asserted.
\end{proof}

Our next step is to prove a version of Theorem \ref{alternating_constant_rank_bound}
for subspaces $\M$ of $\Alt(V)$ with $|\rank(\M)|>1$. 

\begin{theorem} \label{alternating_several_ranks_bound}

Let $\M$ be a non-zero 
subspace of $\Alt(V)$, let $m$ be the largest integer in $\rank(\M)$ and let
$r=|\rank(\M)|$. Then if
$m\leq \lfloor n/2\rfloor$ 
and $|K|\geq m+1$, we have
\[
\dim \M\leq rn-\frac{r(r+1)}{2}.
\]
\end{theorem}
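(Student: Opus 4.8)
The plan is to induct on $r=|\rank(\M)|$, in direct analogy with the proof of Theorem~\ref{symmetric_general_dimension_bound} but using the two-vector reduction of Theorem~\ref{alternating_dimension_bound} in place of the single-vector reduction available for symmetric forms. The base case $r=0$ gives $\M=0$ and is trivial; when $r=1$ the subspace has constant rank $m$, and since $m\le\lfloor n/2\rfloor$ forces $2m-1\le n-1$, Theorem~\ref{alternating_constant_rank_bound} already yields $\dim\M\le\max(n-1,2m-1)=n-1=rn-\frac{r(r+1)}{2}$. For the inductive step with $r\ge 1$ I would split into two cases according to whether the hypothesis of Theorem~\ref{alternating_dimension_bound} holds.

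First, suppose some $w\in V$ has the property that $\M_w$ contains no element of rank $m$. Then $\rank(\M_w)\subseteq\rank(\M)\setminus\{m\}$, so $|\rank(\M_w)|\le r-1$, and its largest rank $m'$ satisfies $m'\le m-1\le\lfloor(n-1)/2\rfloor$. Identifying $\M_w$ with a subspace of $\Alt(U')$ for a complement $U'$ of $\langle w\rangle$ (so $\dim U'=n-1$) and applying the induction hypothesis, together with $\dim\M\le\dim\M_w+(n-1)$ from Lemma~\ref{M_u_alternating_bound}, gives $\dim\M\le (r-1)(n-1)-\frac{(r-1)r}{2}+(n-1)$, which simplifies to exactly $rn-\frac{r(r+1)}{2}$.

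In the remaining case every $\M_w$ contains an element of rank $m$, so Theorem~\ref{alternating_dimension_bound} supplies vectors $u,v$ with $\N:=\M_u\cap\M_v$ containing no element of rank $m$, together with the estimate $\dim\M\le 2m-1+\dim\N$ and an identification of $\N$ with a subspace of $\Alt(W)$ where $\dim W=n-2$. As before $|\rank(\N)|\le r-1$ and its largest rank is at most $m-1\le\lfloor(n-2)/2\rfloor$, so the induction hypothesis gives $\dim\N\le(r-1)(n-2)-\frac{(r-1)r}{2}$. Substituting, the target bound $rn-\frac{r(r+1)}{2}$ will follow provided $2m-1\le n+r-2$, that is $2m\le n+r-1$; and this holds because $m\le\lfloor n/2\rfloor$ gives $2m\le n$ while $r\ge1$.

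The routine parts are the two arithmetic simplifications, which I expect to close cleanly. The point requiring care, and where the hypothesis $m\le\lfloor n/2\rfloor$ is genuinely used, is the final inequality $2m\le n+r-1$ in the second case: it is precisely the bound on $m$ that guarantees the two-vector reduction does not lose too much, and this explains the paper's remark that the restriction on the size of $m$ is essential here. A secondary point to verify carefully is that the largest-rank hypotheses propagate to the smaller subspaces in both cases (using $\lfloor(n-2)/2\rfloor=\lfloor n/2\rfloor-1$ and the analogous estimate for $n-1$), so that the induction hypothesis legitimately applies and $|K|\ge m+1\ge m'+1$ remains in force throughout.
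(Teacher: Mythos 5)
Your proposal is correct and follows essentially the same route as the paper: induction on $r$, splitting into the case where some $\M_w$ omits rank $m$ (handled via Lemma~\ref{M_u_alternating_bound}) and the case where every $\M_w$ attains rank $m$ (handled via the two-vector reduction of Theorem~\ref{alternating_dimension_bound}), with matching arithmetic. The only cosmetic differences are that you treat $r=1$ as an explicit base case via Theorem~\ref{alternating_constant_rank_bound} rather than folding it into the induction, and you use the bound $m'\leq m-1$ where the paper exploits evenness of alternating ranks to write $m'\leq m-2$ --- both suffice, since $m-1\leq\lfloor (n-1)/2\rfloor$ and $m-1\leq\lfloor (n-2)/2\rfloor$ already follow from $m\leq\lfloor n/2\rfloor$.
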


\begin{proof}
We proceed by induction on $r$ and may assume that $r\geq 1$.

Suppose that there is an element $w$ of $V$ such that $\M_w$ contains no element of rank $m$. Let $U$ be the one-dimensional subspace of $V$ 
spanned by $w$ and let $U'$ be a complement of $U$ in $V$. Since $U$ is contained
in the radical of each element of $\M_w$, we may identify $\M_w$ with a subspace of
 $\Alt(U')$. We also have $|\rank(\M_w)|\leq r-1$, since $\M_w$ contains no element
 of rank $m$.
 
 Let $m'$ be the largest integer in $\rank(\M_w)$. We have $m'\leq m-2$, since 
 $m'<m$ and each element of $\Alt(V)$ has even rank. It follows then by a simple inspection that $m'\leq \lfloor (n-1)/2\rfloor$. By induction, we have
 \[
 \dim \M_w\leq (r-1)(n-1)-\frac{(r-1)r}{2}.
\]
 Then since $\dim \M\leq \dim \M_w+n-1$, by Lemma \ref{M_u_alternating_bound}, we obtain that
\[
 \dim \M\leq (r-1)(n-1)-\frac{(r-1)r}{2}+n-1= rn-\frac{r(r+1)}{2}.
\] 
 We have thus completed the induction step in this case. 
 
 We can now assume that for each element $w$ of $V$, $\M_w$ contains an element of rank $m$. It follows from Theorem \ref{alternating_dimension_bound} that
there exist elements $u$ and $v$ in $V$ such that 
$\M_u\cap \M_v$ contains no elements of rank $m$ and the inequality 
\[
\dim \M\leq 2m-1+\dim (\M_u\cap \M_v)
\]
holds. Moreover, we may identify $\M_u\cap \M_v$ with a subspace of $\Alt(W)$, where 
$W$ is a subspace of $V$ of dimension $n-2$. 

We clearly have $|\rank(\M_u\cap \M_v)|\leq r-1$. Moreover, if $m''$ is the largest
integer in $\rank(\M_u\cap \M_v)$, we have $m''\leq m-2$, as above. Then it is easy to verify that $m''\leq  \lfloor (n-2)/2\rfloor$. Thus, by induction
\[
\dim (\M_u\cap \M_v)\leq (r-1)(n-2)-\frac{(r-1)r}{2}
\]
and hence by the inequality in the paragraph above, 
\[
\dim \M\leq (r-1)(n-2)-\frac{(r-1)r}{2}+2m-1.
\]
 
Our hypothesis that $m\leq \lfloor n/2\rfloor$ implies that $2m-1\leq n-1$. We thus have
 \[
\dim \M\leq (r-1)(n-2)-\frac{(r-1)r}{2}+n-1=rn-\frac{(r^2+3r-2)}{2}.
\]
It is straightforward to verify that $r^2+3r-2\geq r(r+1)$ when $r\geq 1$.
Thus
\[
\dim \M\leq rn-\frac{r(r+1)}{2}
\]
holds for $r\geq 1$ and we have  completed the induction step in this second case. Therefore, the dimension bound holds for all $r$, as required. 
\end{proof}

We remark that it easy to see that this dimension bound is optimal in certain cases.
On the other hand, the bound does not hold without some restriction on $m$. For example, suppose that $n=2k+1$ is odd and $K$ is a finite field. Then it is possible to construct a subspace $\M$ of $\Alt(V)$ such that $\dim \M=(k-s+1)n$ and
$\rank \M=\{2s, 2s+2, \ldots, 2k\}$, where the integer $s$ satisfies $1\leq s\leq k$. Note that when $s=1$, $\M$ is $\Alt(V)$. 

\section{Constant rank subspaces of alternating forms over finite fields}

\noindent The general results we have obtained in Section 3 can be used to deduce
reasonably precise information about $n$-dimensional constant rank subspaces of $\Alt(V)$ when we work over sufficiently large finite fields. Our first theorem
gives the basic details.

\begin{theorem} \label{spread_theorem}
 Let
$\M$ be an $n$-dimensional constant rank $m$ subspace of $\Alt(V)$ and let
$K=\mathbb{F}_q$, where
$q\geq m+1$. Let $R_1$, \dots, $R_t$ be the different subspaces of dimension $n-m$ in $V$ that occur as the radicals of the elements of $\M^\times$. Then these subspaces form a spread of $V$. As a consequence, $t=(q^n-1)/(q^{n-m}-1)$ and hence $n-m$ divides $n$.

Furthermore, if $\M_i$ is the subspace of $\M$ consisting of those elements of $\M$ whose radical contains $R_i$, $1\leq i\leq t$, these $t$ subspaces form a spread of $\M$. 

\end{theorem}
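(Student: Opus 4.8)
The plan is to pin down the dimension of $\M_u$ exactly for every nonzero $u$, and then read off the entire spread structure from the equality clause of Lemma \ref{improved_M_u_dimension_bound}. Since $\M\leq\Alt(V)$ we have $\M_u^L=\M_u^R=\M_u$ throughout, and every nonzero element of $\M$ has rank $m$. First I would establish the lower bound $\dim\M_u\geq n-m$ for each $u\in V^\times$: by Lemma \ref{M_u_alternating_bound} together with $\dim\M=n$ we get $\dim\M_u\geq 1$, so $\M_u\neq 0$; as $\M$ has constant rank $m$, this forces $\M_u$ to contain an element of rank $m$, and Lemma \ref{improved_M_u_dimension_bound} then yields $\dim\M_u\geq n-m$.

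The crux is to upgrade this to equality for every $u$ simultaneously, and this is where the counting lemma enters. Writing $d(u)=\dim\M_u$ and applying Lemma \ref{constant_rank_dimension_lemma} with $d=\dim\M=n$ gives
\[
(q^n-1)(q^{n-m}-1)=\sum_{u\neq 0}(q^{d(u)}-1).
\]
There are exactly $q^n-1$ nonzero vectors, and each summand satisfies $q^{d(u)}-1\geq q^{n-m}-1$ by the previous step, so the right-hand side is already at least $(q^n-1)(q^{n-m}-1)$. Equality must therefore hold term by term, forcing $d(u)=n-m$ for every $u\in V^\times$. I expect this uniform equality to be the main obstacle, since it is precisely the ingredient that activates the equality case of Lemma \ref{improved_M_u_dimension_bound}: that case tells us all rank-$m$ elements of $\M_u$ --- that is, all of $\M_u^\times$ --- share a single common radical $R(u)$, which necessarily contains $u$.

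With this in hand the spread of $V$ follows formally. Every nonzero $u$ lies in $R(u)=\rad f$ for some $f\in\M_u^\times$, so the radicals cover $V$. If a nonzero $v$ lies in two radicals $R_i$ and $R_j$, then a form with radical $R_i$ belongs to $\M_v^\times$ and hence has radical $R(v)$, giving $R_i=R(v)$, and likewise $R_j=R(v)$, so $R_i=R_j$; thus distinct radicals meet only in $0$. Hence $R_1,\dots,R_t$ partition $V^\times$ into blocks of size $q^{n-m}-1$, so $t(q^{n-m}-1)=q^n-1$ and $t=(q^n-1)/(q^{n-m}-1)$; the integrality of $t$ together with the standard fact that $q^a-1\mid q^b-1$ iff $a\mid b$ then gives $(n-m)\mid n$.

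Finally, for the spread of $\M$ I would identify $\M_u=\M_i$ whenever $u\in R_i^\times$: both consist of $0$ together with the forms of radical $R_i$, since a nonzero form of rank $m$ contains $R_i$ in its radical exactly when its radical equals $R_i$. Thus each $\M_i$ is a subspace, of dimension $n-m$ by the previous step. Every $f\in\M^\times$ lies in the unique $\M_i$ with $\rad f=R_i$, so the $\M_i$ cover $\M^\times$, while $\M_i\cap\M_j=0$ for $i\neq j$, since a common nonzero element would have radical both $R_i$ and $R_j$. Therefore the $\M_i$ partition $\M^\times$ and form a spread of $\M$, as required.
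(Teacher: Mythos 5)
Your proof is correct and takes essentially the same route as the paper: the counting identity of Lemma \ref{constant_rank_dimension_lemma}, combined with the lower bound $d(u)\geq n-m$ obtained from Lemmas \ref{M_u_alternating_bound} and \ref{improved_M_u_dimension_bound}, forces $d(u)=n-m$ for every $u\neq 0$, and the equality clause of Lemma \ref{improved_M_u_dimension_bound} then supplies the common radical $R(u)$, from which both spreads follow by the same partition argument. Your identification $\M_u=\M_i$ for $u\in R_i^\times$ is merely a slightly tidier packaging of the paper's pairwise claims $\M_u\cap\M_w=0$ and $R\cap S=0$.
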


\begin{proof}
Let $u$ be any element of $V^\times$. Since $\M$ consists of alternating bilinear
forms and $\dim \M=n$, Lemma \ref{M_u_alternating_bound} implies that $\M_u$ in non-zero. Thus, since $\M$ is a constant rank $m$ subspace and $q\geq m+1$, Lemma \ref{improved_M_u_dimension_bound} implies that $d(u)=\dim \M_u\geq n-m$.
Lemma \ref{constant_rank_dimension_lemma}  then yields that
\[
\sum_{u\neq 0}(q^{d(u)}-1)=(q^n-1)(q^{n-m}-1).
\]
Since $d(u)\geq n-m$, we see that the left hand side above is at least $(q^n-1)(q^{n-m}-1)$, and can only equal $(q^n-1)(q^{n-m}-1)$ if $d(u)=n-m$ for all $u\neq 0$. We deduce that  $d(u)=n-m$ for all non-zero $u$.

It now follows from Lemma \ref{improved_M_u_dimension_bound} that each element of $\M_u^\times $ has the same radical, $R$, say, where $R$ depends on $u$. Let $w$ be an element of $V$ not in $R$ and let $S$ be the common radical of the  elements in 
$\M_w^\times$. Note that since $w\in S$, $R\neq S$.

We claim next that $\M_u\cap \M_w=0$ and $R\cap S=0$. For suppose that $f$ is a non-zero element of  $\M_u\cap \M_w$. Then $u\in \rad f=R$ and $w\in \rad f=R$. This is clearly absurd, and we deduce that $\M_u\cap \M_w=0$, as claimed. 

Similarly, suppose that $v$ is a non-zero element of $R\cap S$. If $g$ is any  element of $\M_u^\times$, $v\in \rad g=R$. But equally, if $h$ is any  element of $\M_w^\times$, $v\in \rad h=S$. But we also have $g\in \M_v$ and $h\in \M_v$. This implies that $g$ and $h$ have the same radical, as all elements of $\M_v$ have the same radical. This gives the contradiction that $R=S$. We deduce that $R\cap S=0$.

It follows that if $R_1$, \dots, $R_t$ are the different subspaces of dimension $n-m$ in $V$ that occur as the radicals of the elements of $\M^\times$, these subspaces form a spread of $V$.  We deduce that $t=(q^n-1)/(q^{n-m}-1)$ and hence $n-m$ divides $n$.
Likewise, if $\M_i$ is the subspace of $\M$ consisting of those elements of $\M$ whose radical contains $R_i$, $1\leq i\leq t$, these $t$ subspaces form a spread of $\M$.
\end{proof}

We turn now to considering to what extent the converse of Theorem \ref{spread_theorem} holds: in other words, if $n-m$ divides $n$ (and $m$ is even), is there a constant rank $m$ subspace of $\Alt(V)$ of dimension $n$? As we shall see, the answer is no in general, but there are many cases for which the converse is true.

Our starting point is the following construction. Suppose that $U$ is a vector space of odd dimension $k>1$  over an arbitrary finite field. Then there is a 
$k$-dimensional constant rank $k-1$ subspace, $\N$, say, of $\Alt(U)$. The spread associated
with $\N$ is the trivial one, consisting of all one-dimensional subspaces of $U$. See, for example, Theorem 7 of \cite{DGo}.

If we take the field to be $\mathbb{F}_{q^t}$, and consider $U$ and $\N$ as vector spaces of dimension $n=kt$ over $\mathbb{F}_{q}$, the trace map from $\mathbb{F}_{q^t}$ to $\mathbb{F}_{q}$ enables us derive a constant rank $m=(k-1)t$ subspace $\M$, say, of $\Alt(V)$, where $\dim \M=\dim V=n=kt$, and $\M$ and $V$ are vector spaces
over $\mathbb{F}_{q}$. We have therefore the following result.

\begin{theorem} \label{constant_rank_construction}

Let $K$ be a finite field and let
$n\geq 3$ be a positive integer that is not a power of $2$. Let $k>1$ be an odd divisor of $n$ and let $m=n(k-1)/k$. Then $\Alt(V)$ contains an $n$-dimensional constant rank $m$ 
subspace. In particular, if $n$ is odd, this construction holds for any divisor of $n$ greater than $1$.

\end{theorem}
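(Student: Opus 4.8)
The plan is to realize the desired subspace by restriction of scalars, converting the known $\mathbb{F}_{q^t}$-construction recalled above into an $\mathbb{F}_q$-construction via the trace form. Write $n=kt$ and set $L=\mathbb{F}_{q^t}$, so that $[L:\mathbb{F}_q]=t$. Let $U$ be a $k$-dimensional vector space over $L$; since $k>1$ is odd, Theorem 7 of \cite{DGo} supplies a $k$-dimensional (over $L$) constant rank $k-1$ subspace $\N$ of the space of alternating $L$-bilinear forms on $U$. I now regard $U$ as a vector space $V$ over $\mathbb{F}_q$; then $\dim_{\mathbb{F}_q}V=kt=n$, and correspondingly $\dim_{\mathbb{F}_q}\N=kt=n$.

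First I would define the transfer map. For each $f\in\N$, set $\tilde f(x,y)=\Tr(f(x,y))$ for $x,y\in V$, where $\Tr:L\to\mathbb{F}_q$ is the field trace. Since $f$ is $L$-bilinear and $\Tr$ is $\mathbb{F}_q$-linear, $\tilde f$ is $\mathbb{F}_q$-bilinear; and $f(x,x)=0$ forces $\tilde f(x,x)=0$, so $\tilde f\in\Alt(V)$. The assignment $f\mapsto\tilde f$ is plainly $\mathbb{F}_q$-linear (though not $L$-linear), so its image $\M$ is a subspace of $\Alt(V)$ over $\mathbb{F}_q$. It remains to identify the radical of each $\tilde f$ and to show that the map is injective.

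The crux is the claim that, for every $f\in\N$, the $\mathbb{F}_q$-radical of $\tilde f$ equals the $L$-radical of $f$ as subsets of $U=V$. The inclusion of the $L$-radical in the $\mathbb{F}_q$-radical is immediate. For the reverse, suppose $\Tr(f(x,y))=0$ for all $y\in V$; since $f$ is $L$-bilinear, replacing $y$ by $\lambda y$ gives $\Tr(\lambda\,f(x,y))=0$ for all $\lambda\in L$ and all $y$, and the non-degeneracy of the trace bilinear form $(\lambda,c)\mapsto\Tr(\lambda c)$ on $L$ over $\mathbb{F}_q$ then forces $f(x,y)=0$ for all $y$, i.e. $x$ lies in the $L$-radical of $f$. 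This single argument does all the work: if $f\neq 0$ its $L$-radical has $L$-dimension $k-(k-1)=1$, hence $\mathbb{F}_q$-dimension $t$, so $\tilde f$ has rank $n-t=t(k-1)=m$; in particular $\tilde f\neq 0$, which gives injectivity. Thus $\dim_{\mathbb{F}_q}\M=\dim_{\mathbb{F}_q}\N=n$ and every non-zero element of $\M$ has rank $m$, as required. The final ``in particular'' clause is then immediate, since when $n$ is odd every divisor exceeding $1$ is automatically odd, while the hypothesis that $n$ is not a power of $2$ merely guarantees that an odd $k>1$ dividing $n$ exists.

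I expect the only genuine content to be the radical identification, resting on non-degeneracy of the trace form; everything else (bilinearity, the alternating property, $\mathbb{F}_q$-linearity, and the dimension bookkeeping) is formal. The point worth stating carefully is that this one fact simultaneously yields injectivity of $f\mapsto\tilde f$, and hence the dimension count $\dim\M=n$, together with the constant rank property, so that no separate argument for either is needed.
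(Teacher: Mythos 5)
Your proposal is correct and follows exactly the construction the paper itself uses: transferring the $k$-dimensional constant rank $k-1$ subspace of Delsarte--Goethals over $\mathbb{F}_{q^t}$ down to $\mathbb{F}_q$ via the trace form, with $n=kt$ and $m=t(k-1)$. The paper only sketches this (it states the construction in the paragraph preceding the theorem without a formal proof), and your radical identification via non-degeneracy of the trace bilinear form is precisely the detail needed to justify both the constant rank property and the injectivity, so your writeup is a faithful completion of the paper's argument rather than a different route.
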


We mention the following non-existence criterion for constant rank 
subspaces. Suppose that $n=4k$, where $k$ is a positive integer, and 
 $|K|\geq 2k+1$.  Then $\Alt(V)$ does not contain a constant rank $2k$ subspace of dimension $n$. This is an immediate consequence of Theorem \ref{alternating_constant_rank_bound}.

The remainder of this section is devoted to improving Theorem \ref{alternating_constant_rank_bound} in the case of finite fields. Initially, however, we work over general fields. 

\begin{definition} \label{R(M)_definition}
Let $\M$ be a subspace of $\Alt(V)$. We set
\[
V(\M)=\{v\in V: \M_v\neq 0\}.
\]
\end{definition}

Since we are dealing with alternating bilinear forms, $V(\M)$ is identical with
the subspaces $V(\M)^L$ and $V(\M)^R$ introduced in Definition \ref{V^L_M_definition}.
Lemmas \ref{V(M)_is_a_subspace} and \ref{left_orthogonal_right_orthogonal} show that
if $\M$ is a constant rank $m$ subspace
of $\Alt(V)$, then $V(\M)$ is a subspace of $V$ that is totally isotropic for $\M$ provided that $\dim \M\geq 2m+1$ and $|K|\geq m+1$. We will show below that an improved result holds if $K$ is a finite field.

We begin our analysis by looking at an extreme case.

\begin{lemma} \label{gow_quinlan_lemma}
Let $\M$ be a constant rank $m$ subspace of $\Alt(V)$ and  $K=\mathbb{F}_q$.
Suppose that $V(\M)=\rad f$ for some $f\in \M^\times$. Then $\dim \M\leq m/2$.
\end{lemma}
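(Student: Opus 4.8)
The plan is to reduce the hypothesis to a statement about a space of \emph{non-degenerate} alternating forms on a space of dimension $m$, and then to obtain the bound from the Chevalley--Warning theorem applied to the Pfaffian.

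First I would unwind the hypothesis $V(\M)=\rad f$. Recall that $V(\M)$ is the union of the radicals $\rad g$ as $g$ ranges over $\M^\times$. Since $\M$ is a constant rank $m$ subspace of $\Alt(V)$, every such radical has dimension $n-m$; moreover each $\rad g$ is contained in $V(\M)=\rad f$, which also has dimension $n-m$. Hence $\rad g=\rad f=:R$ for every $g\in\M^\times$, so all non-zero elements of $\M$ share the common radical $R$, with $\dim R=n-m$. Taking a complement $W$ of $R$ in $V$, so that $\dim W=m$, the fact that $R$ lies in the radical of each element of $\M$ means that every $g\in\M$ is determined by its restriction to $W\times W$; since $\rank g=m=\dim W$, this restriction is non-degenerate. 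Thus I may identify $\M$ with a subspace of $\Alt(W)$ every non-zero element of which is non-degenerate. This reduction parallels the identifications already used in Theorem \ref{alternating_dimension_bound}. Write $d=\dim\M$.

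Next I would introduce the Pfaffian as a polynomial function on $\M$. Fixing a basis $h_1,\dots,h_d$ of $\M$ and writing a general element as $\sum_i x_i h_i$, its matrix relative to a basis of $W$ is an alternating $m\times m$ matrix whose entries are linear forms in $x_1,\dots,x_d$. Its Pfaffian $P(x_1,\dots,x_d)$ is then a homogeneous polynomial of degree $m/2$ over $\mathbb{F}_q$, where $m$ is even because it is the rank of an alternating form. An alternating matrix is non-singular if and only if its Pfaffian is non-zero, so by the previous paragraph $P$ vanishes only at the origin of $\mathbb{F}_q^{\,d}$.

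Finally I would invoke the Chevalley--Warning theorem: a homogeneous polynomial of degree $m/2$ in $d$ variables over $\mathbb{F}_q$ has a number of zeros divisible by the characteristic of $\mathbb{F}_q$ whenever $d>m/2$. Since the origin is always a zero of the homogeneous $P$, a strict inequality $d>m/2$ would force the existence of a further, non-trivial zero, contradicting the non-degeneracy of all non-zero elements of $\M$. Hence $d\le m/2$, which is exactly $\dim\M\le m/2$. I expect the only point needing care to be the characteristic-free handling of the Pfaffian: one must note that it is a universal integer polynomial, so both its degree $m/2$ and the equivalence ``non-degenerate if and only if Pfaffian non-zero'' persist over finite fields of characteristic $2$, where a naive appeal to $\det=\mathrm{Pf}^2$ would be unconvincing. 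Everything else is routine.
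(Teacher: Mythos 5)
Your proof is correct, and its first half coincides exactly with the paper's own argument: the paper likewise observes that $V(\M)=\rad f$ forces every element of $\M^\times$ to have radical $R=\rad f$ (each $\rad g$ is an $(n-m)$-dimensional subspace inside the $(n-m)$-dimensional $\rad f$), and then identifies $\M$ with a constant rank $m$ subspace of $\Alt(V/R)$, your complement $W$ playing the role of $V/R$. The divergence is in the final step: where you prove directly that a $d$-dimensional space of non-degenerate alternating forms on an $m$-dimensional space satisfies $d\leq m/2$, the paper simply cites Lemma 3 of \cite{GQ}. Your Pfaffian argument is sound: the Pfaffian of the generic element $\sum_i x_i h_i$ is a homogeneous polynomial of degree $m/2$ in $d$ variables that, by the non-degeneracy of all elements of $\M^\times$, vanishes only at the origin, so Chevalley's theorem forces $m/2\geq d$. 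This is, in substance, the standard proof of the cited lemma, so what your route buys is self-containedness rather than genuinely new content; it also makes visible why the statement needs no lower bound on $q$, consistent with the lemma's hypothesis that $K=\mathbb{F}_q$ is arbitrary. Your flagged point about characteristic $2$ is exactly the right one to flag: since the Pfaffian is a universal polynomial over $\mathbb{Z}$ satisfying $\det=\mathrm{Pf}^2$ identically, the equivalence of non-degeneracy with non-vanishing of the Pfaffian persists in characteristic $2$, where alternating means zero diagonal and symmetric and no square root of the determinant can be extracted naively. Two small points worth making explicit if you write this up: the restriction map $g\mapsto g|_{W\times W}$ is injective on $\M$ (any $g\in\M$ has $R$ inside both radicals, so $g$ is recovered from its restriction), which is what makes the identification dimension-preserving; and $m$ is automatically even since a non-zero alternating form has even rank, so the degree $m/2$ is an integer.
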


\begin{proof}
We set $R=\rad f$. Our hypothesis implies that each element of $\M^\times$ has radical
$R$. We may then identify $\M$ with a constant rank $m$ subspace of $\Alt(V/R)$. Since $\dim (V/R)=m$, we have $\dim \M\leq m/2$ by Lemma 3 of \cite{GQ}.
\end{proof}

\begin{lemma} \label{R(M)_is_a_subspace}

Let $\M$ be a constant rank $m$ subspace of $\Alt(V)$ and  $K=\mathbb{F}_q$.
Suppose that $q\geq m+1$, $m>2$ and $\dim \M\geq 2m-1$. Then $V(\M)$ is a subspace of $V$ that is totally isotropic for $\M$. 

\end{lemma}

\begin{proof}
Let $u$ be an element of $V(\M)$. Lemma \ref{improved_M_u_dimension_bound} shows that
$\dim \M_u\geq \dim \M-m$ and moreover, if $\dim \M_u=\dim \M-m$, all elements of
$\M_u^\times$ have the same radical. Suppose then that we have $\dim \M_u=\dim \M-m$.
Lemma \ref{gow_quinlan_lemma} applied to $\M_u$ implies that $\dim \M_u\leq m/2$ and 
this in turn implies that $\dim \M\leq 3m/2$. But we are already assuming that 
$\dim \M\geq 2m-1$ and hence $2m-1\leq 3m/2$. This last inequality implies that $m\leq 2$, which possibility is excluded by hypothesis.

We have thus established that $\dim \M_u\geq \dim \M-m+1$ when $u\in V(\M)$. We intend to use this inequality to show that $V(\M)$ is a subspace. It is clear that $V(\M)$ is closed under scalar multiplication. Given elements $u$ and $v$ in $V(\M)$ it suffices then to show that $u+v\in V(\M)$. As in the proof of Lemma \ref{V(M)_is_a_subspace}, this follows if we can show that $\M_u\cap \M_v\neq 0$.

We have 
\[
\dim(\M_u+\M_v)=\dim \M_u+\dim \M_v-\dim (\M_u\cap \M_v)\leq \dim \M.
\]
Since we have established that $\dim \M_u, \dim \M_v\geq \dim \M-m+1$, we obtain
\[
2(\dim \M-m+1)-\dim (\M_u\cap \M_v)\leq \dim \M.
\]
Given that we are assuming that $\dim \M\geq 2m-1$, this inequality leads to
\[
\dim (\M_u\cap \M_v)\geq \dim \M-2m+2\geq 1
\]
and we have thus proved that the intersection is non-zero, as required. Thus
$V(\M)$ is a subspace. 

Finally, we need to show that $V(\M)$ is totally isotropic for $\M$. To achieve this, we must
prove that $g(u,v)=0$ for all $u$ and $v$ in $V(\M)$, and all $g$ in $\M$. Now since we have proved that $\M_u\cap \M_v\neq 0$, $u$ and $v$ are both contained in $\rad f$, where $f$ is any non-zero element of $\M_u\cap \M_v$.
Since $\rad f$ is totally isotropic for $\M$ by Theorem \ref{left_radical_right_radical}, we see that $g(u,v)=0$ for $g$ in $\M$. This establishes the total isotropy of $V(\M)$.
\end{proof}

\begin{lemma} \label{dimension_of_R(M)}

Let $\M$ be a constant rank $m=2k$ subspace of $\Alt(V)$ and
$K=\mathbb{F}_q$. Suppose that $V(\M)$ is a subspace of $V$ and not all elements of $\M^\times$ have the same radical. Then if $\dim \M> n-k$, we have $\dim V(\M)\geq n-m+2$.

\end{lemma}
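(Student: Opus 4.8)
The plan is to argue by contradiction, excluding the single value of $\dim V(\M)$ that could fall below the claimed bound. Since $V(\M)=\bigcup_{f\in\M^\times}\rad f$ is by hypothesis a subspace, and each radical $\rad f$ has dimension $n-m$, the assumption that not all elements of $\M^\times$ share a radical means $V(\M)$ contains two distinct $(n-m)$-dimensional subspaces; their sum already shows $\dim V(\M)\geq n-m+1$. It therefore suffices to rule out the extremal case $\dim V(\M)=n-m+1$. In that case, writing $P=V(\M)$, every radical $\rad f$ ($f\in\M^\times$) is an $(n-m)$-dimensional subspace of the $(n-m+1)$-dimensional space $P$, that is, a hyperplane of $P$.

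Next I would organise $\M^\times$ by radicals. Let $R_1,\dots,R_t$ be the distinct radicals that occur, and for each $i$ set $\M^{(i)}=\{f\in\M:R_i\subseteq\rad f\}$. This is a subspace (an intersection of the $\M_r$ with $r\in R_i$), and by the constant rank hypothesis every non-zero element of $\M^{(i)}$ has radical exactly $R_i$; hence the $\M^{(i)}$ partition $\M^\times$ into disjoint pieces. Since all elements of $\M^{(i)}$ have $R_i$ in their radical, $\M^{(i)}$ may be identified with a subspace of $\Alt(V/R_i)$ consisting of non-degenerate forms, where $\dim(V/R_i)=m=2k$. Equivalently, $V(\M^{(i)})=R_i$ is the radical of each of its non-zero elements, so Lemma \ref{gow_quinlan_lemma} applied to $\M^{(i)}$ gives $\dim\M^{(i)}\leq m/2=k$. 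Counting non-zero vectors over the partition then yields
\[
q^{\dim\M}-1=\sum_{i=1}^{t}\bigl(q^{\dim\M^{(i)}}-1\bigr)\leq t\,(q^{k}-1).
\]

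Finally I would bound $t$ from above. Because $R_1,\dots,R_t$ are distinct hyperplanes of $P$, and $\dim P=n-2k+1$, there are at most $(q^{n-2k+1}-1)/(q-1)$ of them, so $t\leq(q^{n-2k+1}-1)/(q-1)$. Combining this with the displayed inequality and with $\dim\M\geq n-k+1$ (from the hypothesis $\dim\M>n-k$) gives
\[
(q^{n-k+1}-1)(q-1)\leq(q^{\dim\M}-1)(q-1)\leq(q^{k}-1)(q^{n-2k+1}-1).
\]
A short estimate using $q\geq m+1\geq 3$ finishes the argument: the left-hand side is at least $2(q^{n-k+1}-1)>q^{n-k+1}$, whereas the right-hand side is strictly less than $q^{k}\cdot q^{n-2k+1}=q^{n-k+1}$, a contradiction. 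Hence $\dim V(\M)=n-m+1$ cannot occur, and $\dim V(\M)\geq n-m+2$. The main obstacle, and the step that drives the whole proof, is the two-sided count in the last two displays: one must extract from Lemma \ref{gow_quinlan_lemma} that each radical-fibre of $\M$ contributes dimension at most $k$, and then verify that the resulting lower bound on the number $t$ of radicals is incompatible with the supply of hyperplanes available in the $(n-m+1)$-dimensional space $V(\M)$ — and this incompatibility materialises exactly under the hypothesis $\dim\M>n-k$.
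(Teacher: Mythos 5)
Your proof is correct and is essentially the paper's own argument: both reduce to excluding the extremal case $\dim V(\M)=n-m+1$, partition $\M^\times$ into the radical fibres $\M^{(i)}=\{f\in\M: R_i\subseteq \rad f\}$, bound each fibre's dimension by $k$ via Lemma \ref{gow_quinlan_lemma}, bound the number $t$ of radicals above by the number of hyperplanes of $V(\M)$, and arrive at the same incompatible inequality $(q^d-1)/(q^k-1)\leq (q^{n-2k+1}-1)/(q-1)$ for $d>n-k$. One small caveat: your closing estimate invokes $q\geq m+1\geq 3$, a hypothesis not present in the lemma's statement; it is unnecessary, since expanding gives $(q^{n-k+1}-1)(q-1)-(q^k-1)(q^{n-2k+1}-1)=q^{n-k+1}(q-2)+q^k+q^{n-2k+1}-q>0$ for every $q\geq 2$ (using $n-2k\geq 1$, which follows from the existence of distinct radicals), which is how the paper obtains the contradiction without any restriction on $q$.
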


\begin{proof}
Since $V(\M)$ contains all radicals of elements of $\M^\times$, and we are assuming that not all these radicals are the same, $\dim V(\M)\geq n-m+1$. 

Suppose now by way of contradiction that $\dim V(\M)=n-m+1$. Since $V(\M)$ contains the radical of each element of $\M^\times$, and each radical has codimension 1 in
$V(\M)$, the number of different radicals is at most $(q^{n-m+1}-1)/(q-1)$. 

On the other hand, let $S$ be the radical of some element of $\M^\times$, and let
\[
\M_S=\{ g\in \M: S\leq \rad g\}.
\]
$\M_S$ is a subspace of $\M$ and Lemma \ref{gow_quinlan_lemma} implies that
$\dim \M_S\leq m/2$. If $S'=\rad h$, where $h\in\M^\times$, and $S\neq S'$, we clearly have $\M_S\cap \M_{S'}=0$. It follows that the subspaces $\M_S$ form a partition of $\M$. Thus if $\dim \M=d$, the number of different radicals is at least
$(q^d-1)/(q^k-1)$, where $m=2k$.

We deduce that the inequality
\[
\frac{q^d-1}{q^k-1}\leq \frac{q^{n-m+1}-1}{q-1}
\]
holds. This leads to the inequality 
\[
q^{d+1}\leq q^d+q^{n-k+1}-q^k-q^{n-2k+1}+q.
\]
It is clear, however, that this inequality cannot hold if $d>n-k$. 
\end{proof}

\begin{lemma} \label{R(M)_inequality}

Let $\M$ be a constant rank $m$ subspace of $\Alt(V)$ and  $K=\mathbb{F}_q$.
Suppose that $q\geq m+1$, $m>2$ and $\dim \M\geq 2m-1$. Then
we have
\[
\dim V(\M)\leq \dim \M_u+n-\dim \M
\]
for each element $u$ of $V(\M)$.

\end{lemma}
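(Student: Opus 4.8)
The plan is to recognize this as the alternating-case analogue of Lemma \ref{dimension_estimate_in_terms_of_V}, now obtainable under the sharper hypothesis $\dim \M\geq 2m-1$ (rather than $\dim \M\geq 2m+1$) because Lemma \ref{R(M)_is_a_subspace} supplies the needed structural fact in this regime. The whole argument runs through the map $\epsilon_u$ and a single dimension count against the annihilator of $V(\M)$ in $V^*$.

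First I would invoke Lemma \ref{R(M)_is_a_subspace}: since $q\geq m+1$, $m>2$ and $\dim \M\geq 2m-1$, the set $V(\M)$ is a subspace of $V$ that is totally isotropic for $\M$. This is the only point at which the hypotheses are used, and it carries all the real content. Next, fixing $u\in V(\M)$ and considering $\epsilon_u:\M\to V^*$, I would observe that for every $f\in \M$ and every $w\in V(\M)$ we have $\epsilon_u(f)(w)=f(u,w)=0$, precisely because $u,w\in V(\M)$ and $V(\M)$ is totally isotropic for $\M$. Hence $\epsilon_u(\M)$ is contained in the annihilator of $V(\M)$ in $V^*$, which has dimension $n-\dim V(\M)$, so that $\dim \epsilon_u(\M)\leq n-\dim V(\M)$.

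Finally, applying the rank-nullity theorem to $\epsilon_u$ gives $\dim \epsilon_u(\M)=\dim \M-\dim \M_u$, and combining this with the annihilator estimate yields $\dim \M-\dim \M_u\leq n-\dim V(\M)$, which rearranges to the asserted inequality $\dim V(\M)\leq \dim \M_u+n-\dim \M$. I do not expect any genuine obstacle here: once the total isotropy of $V(\M)$ is secured by Lemma \ref{R(M)_is_a_subspace}, the conclusion is a one-line annihilator count, mirroring exactly the proof of Lemma \ref{dimension_estimate_in_terms_of_V}. The only thing to verify carefully is that the hypotheses of the present lemma indeed match those of Lemma \ref{R(M)_is_a_subspace}, which they do.
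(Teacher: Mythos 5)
Your proposal is correct and follows exactly the paper's own argument: the paper likewise invokes Lemma \ref{R(M)_is_a_subspace} to secure that $V(\M)$ is a totally isotropic subspace and then reuses the annihilator-plus-rank-nullity count from Lemma \ref{dimension_estimate_in_terms_of_V}. Nothing is missing, and your observation that the hypotheses serve only to activate Lemma \ref{R(M)_is_a_subspace} matches the paper's presentation.
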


\begin{proof}
Lemma \ref{R(M)_is_a_subspace} shows that $V(\M)$ is a subspace that is totally isotropic for $\M$. The rest of the proof follows from the argument used to prove Lemma \ref{dimension_estimate_in_terms_of_V}.
\end{proof}

\begin{theorem} \label{new_alternating_dimension_bound}

Let $\M$ be a constant rank $m$ subspace of $\Alt(V)$, where $4\leq m\leq
\lfloor n/2\rfloor$ and $K=\mathbb{F}_q$. Then if $q\geq m+1$, we have $\dim \M\leq n-2$.

\end{theorem}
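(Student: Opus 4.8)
The plan is to rule out the extremal value $\dim \M = n-1$ by a $q$-adic valuation argument applied to the counting identity of Lemma \ref{constant_rank_dimension_lemma}. Since $m\leq \lfloor n/2\rfloor$ forces $2m-1\leq n-1$, Theorem \ref{alternating_constant_rank_bound} already yields $\dim \M\leq \max(n-1,2m-1)=n-1$. It therefore suffices to assume $\dim \M=n-1$ and derive a contradiction. Under this assumption all hypotheses of the preparatory lemmas hold, namely $q\geq m+1$, $m>2$, and $\dim \M=n-1\geq 2m-1$, so Lemma \ref{R(M)_is_a_subspace} shows that $V(\M)$ is a subspace of $V$ that is totally isotropic for $\M$.

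First I would dispose of the degenerate possibility that all elements of $\M^\times$ share a radical: in that case $V(\M)=\rad f$ for some $f$, and Lemma \ref{gow_quinlan_lemma} forces $\dim \M\leq m/2$, which is absurd against $\dim \M=n-1\geq 2m-1$. Writing $m=2k$ with $k\geq 2$ (possible precisely because $m\geq 4$), the inequality $\dim \M=n-1>n-k$ lets me invoke Lemma \ref{dimension_of_R(M)} to conclude $\dim V(\M)\geq n-m+2$. The crucial consequence is a lower bound on $d(u):=\dim \M_u$ that beats the naive value $n-m$ by one: for each $u\in V(\M)^\times$, Lemma \ref{R(M)_inequality} gives $\dim V(\M)\leq d(u)+n-\dim \M=d(u)+1$, whence $d(u)\geq \dim V(\M)-1\geq n-m+1$.

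Since $\M_u=0$ for $u\notin V(\M)$, the identity of Lemma \ref{constant_rank_dimension_lemma} reduces to $\sum_{u\in V(\M)^\times}(q^{d(u)}-1)=(q^{n-1}-1)(q^{n-m}-1)$. I would now reduce modulo $q^{n-m+1}$. Each term on the left satisfies $q^{d(u)}-1\equiv -1$ because $d(u)\geq n-m+1$, and there are $|V(\M)^\times|=q^{\dim V(\M)}-1\equiv -1$ such terms (using $\dim V(\M)\geq n-m+2$), so the left-hand side is $\equiv 1\pmod{q^{n-m+1}}$. The right-hand side expands as $q^{2n-m-1}-q^{n-1}-q^{n-m}+1$, and since $2n-m-1$ and $n-1$ both exceed $n-m$, it is $\equiv 1-q^{n-m}\pmod{q^{n-m+1}}$. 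As $q^{n-m}$ has valuation exactly $n-m$, the two residues disagree, giving the required contradiction and hence $\dim \M\leq n-2$.

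The delicate point I expect to be the main obstacle is securing the improved uniform bound $d(u)\geq n-m+1$ (rather than merely $n-m$) for every $u\in V(\M)^\times$: it is exactly this extra $+1$, fed by $\dim V(\M)\geq n-m+2$ through Lemma \ref{R(M)_inequality}, that lifts the valuation of the left-hand side above $n-m$ and clashes with the unavoidable $-q^{n-m}$ term on the right. The hypothesis $m\geq 4$ is used precisely here, since it is what makes $\dim \M=n-1$ exceed $n-k$ and thereby trigger Lemma \ref{dimension_of_R(M)}; for $m=2$ this mechanism fails, consistent with the conclusion being false in that range.
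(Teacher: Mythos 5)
Your proposal is correct and follows essentially the same route as the paper's own proof: assume $\dim\M=n-1$, use Lemma \ref{gow_quinlan_lemma} to rule out a common radical, then Lemmas \ref{R(M)_is_a_subspace}, \ref{dimension_of_R(M)} and \ref{R(M)_inequality} to get $\dim V(\M)\geq n-m+2$ and $d(u)\geq n-m+1$, and finally the counting identity of Lemma \ref{constant_rank_dimension_lemma} to force a contradiction. The only cosmetic difference is that you phrase the final clash as a congruence modulo $q^{n-m+1}$, where the paper compares the exact powers of $q$ dividing the two sides; these are the same argument.
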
 

\begin{proof}
Suppose if possible that $\dim \M=n-1$. Then since we are assuming that 
$m\leq \lfloor n/2\rfloor$, we have $\dim \M\geq 2m-1$ and hence Lemma \ref{R(M)_is_a_subspace} implies that $V(\M)$ is a subspace of $V$ that is totally isotropic for $\M$. We assert furthermore that the elements of $\M^\times$ do not all have the same radical. This follows from Lemma \ref{gow_quinlan_lemma}, since $\dim \M>m/2$ in our circumstances.

Now since we are assuming that $\dim \M=n-1$, we certainly have
$\dim \M>n-m/2$ and hence $\dim V(\M)\geq n-m+2$ by Lemma \ref{dimension_of_R(M)}. Lemma \ref{R(M)_inequality} in turn implies that 
\[
\dim \M_u\geq n-m+1
\]
for all $u$ in $V(\M)$.

Lemma \ref{constant_rank_dimension_lemma} shows that we have the equality
\[
\sum_{u\neq 0} (q^{d(u)}-1)=(q^{n-1}-1)(q^{n-m}-1),
\]
where $d(u)=\dim \M_u$. The sum clearly takes place over the non-zero elements
$u$ of $V(\M)$. Thus, if we set $d=\dim V(\M)$, we have
\[
 (\sum_u q^{d(u)})-q^d+1=(q^{n-1}-1)(q^{n-m}-1).
\]
Consequently, 
\[
(\sum_u q^{d(u)})-q^d=q^{2n-m-1}-q^{n-1}-q^{n-m}.
\]

Since we have shown above that $d\geq n-m+2$ and $d(u)\geq n-m+1$, we see that the power of $q$ dividing the left hand side of the sum above is at least $q^{n-m+1}$, whereas the power of $q$ dividing the right hand side is exactly $q^{n-m}$. This is a contradiction, and we deduce that $\dim \M\leq n-2$.
\end{proof}

We note that the assumption that $m\geq 4$ is essential to this theorem, since for any field $K$, there is a constant rank 2 subspace of $\Alt(V)$ of dimension $n-1$.
On the other hand, if $K$ is finite, $\Alt(V)$ contains a constant rank 4 subspace of dimension $n-2$ whenever $n$ is even and at least 4.

\section{Dimension bounds for subspaces of bilinear forms}

\noindent In this final section, we establish analogues of earlier results which apply to bilinear forms in general.

\begin{theorem} \label{bilinear_dimension_bound}

Let $\M$ be a non-zero subspace of $\Bil(V)$ that is not contained in
$\Alt(V)$ and let $m$ be the largest integer in $\rank(\M)$. Suppose that $|K|\geq m+1$. Suppose also that for each element $w$ of $V$, both $\M_w^L$ and $\M_w^R$ contain  elements of rank $m$.

Then there exists an element $u$ in $V$ such that 
$\M_u^L\cap \M_u^R$ contains no elements of rank $m$ and the inequality 
\[
\dim \M\leq 2m-1+\dim (\M_u^L\cap \M_u^R)
\]
holds. Furthermore, if $U$ is the one-dimensional subspace of $V$ spanned by $u$ and $W$ is a complement of $U$ in $V$, we may identify $\M_u^L\cap \M_u^R$ with a subspace of $\Bil(W)$.

\end{theorem}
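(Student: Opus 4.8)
The plan is to adapt the proof of Theorem \ref{alternating_dimension_bound} almost verbatim, with the one structural change that a single vector replaces the pair $u,v$ used there. In the alternating setting the non-triviality of $\M$ supplies a pair $(u,v)$ with $g(u,v)\neq 0$; here the stronger hypothesis that $\M\not\leq\Alt(V)$ supplies instead a single vector $u$ with $g(u,u)\neq 0$ for some $g\in\M$. This $u$ will be the vector whose existence the theorem asserts, and it is why the conclusion involves the one-dimensional space $U=\langle u\rangle$ rather than a two-dimensional one.

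First I would fix such a $g$ and $u$, and verify that $\M_u^L\cap\M_u^R$ contains no element of rank $m$. If $f$ were such an element, then $u$ lies simultaneously in $\rad_L f$ and $\rad_R f$, so applying Theorem \ref{left_radical_right_radical} with the same vector $u$ in the roles of both the left- and right-radical element gives $h(u,u)=0$ for every $h\in\M$, contradicting $g(u,u)\neq 0$. Next comes the dimension count. The key observation is that every element of $\M_u^L+\M_u^R$ vanishes on the diagonal pair $(u,u)$: writing $\phi=\phi_1+\phi_2$ with $\phi_1\in\M_u^L$ and $\phi_2\in\M_u^R$, we have $\phi_1(u,u)=0$ since $u\in\rad_L\phi_1$, and $\phi_2(u,u)=0$ since $u\in\rad_R\phi_2$. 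Since $g(u,u)\neq0$, it follows that $g\notin\M_u^L+\M_u^R$, so $\dim(\M_u^L+\M_u^R)<\dim\M$, which rearranges to
\[
\dim\M_u^L+\dim\M_u^R-\dim(\M_u^L\cap\M_u^R)<\dim\M.
\]
By hypothesis both $\M_u^L$ and $\M_u^R$ contain elements of rank $m$, so Lemma \ref{improved_M_u_dimension_bound} gives $\dim\M_u^L\geq\dim\M-m$ and $\dim\M_u^R\geq\dim\M-m$. Substituting yields $2(\dim\M-m)<\dim\M+\dim(\M_u^L\cap\M_u^R)$, hence $\dim\M\leq 2m-1+\dim(\M_u^L\cap\M_u^R)$.

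For the final identification, note that $u$ lies in both the left and right radical of every element $\psi$ of $\M_u^L\cap\M_u^R$, so $U=\langle u\rangle$ is contained in $\rad_L\psi\cap\rad_R\psi$. Choosing a complement $W$ of $U$ in $V$, each such $\psi$ is then determined by its restriction to $W\times W$, giving the embedding of $\M_u^L\cap\M_u^R$ into $\Bil(W)$. I do not anticipate a genuine obstacle: the proof is a direct transcription of the alternating argument. The only point requiring care is the recognition that the hypothesis $\M\not\leq\Alt(V)$ is exactly what licenses working with a single diagonal vector, and that Theorem \ref{left_radical_right_radical} may be invoked with the right-radical vector equal to the left-radical vector.
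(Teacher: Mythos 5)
Your proposal is correct and follows essentially the same argument as the paper's own proof: the same choice of $u$ with $g(u,u)\neq 0$ from $\M\not\leq\Alt(V)$, the same application of Theorem \ref{left_radical_right_radical} with $u$ serving as both the left- and right-radical vector, the same strict inequality via $g\notin\M_u^L+\M_u^R$, the same use of Lemma \ref{improved_M_u_dimension_bound}, and the same restriction-to-$W\times W$ identification. No gaps.
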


\begin{proof}
Since $\M$ is not contained in $\Alt(V)$, there exists $u$ in $V$ such that $g(u,u)\neq 0$ for some $g\in \M$.

We will now show that $\M_u^L\cap \M_u^R$ contains no elements of rank $m$. For suppose that some form $f$ of rank $m$ is in
$\M_u^L\cap \M_u^R$. Then $u$ is in both $\rad_L f$ and $\rad_R f$, and hence as $|K|\geq m+1$, Theorem
\ref{left_radical_right_radical} implies that $h(u,u)=0$ for all $h\in\M$. This contradicts our earlier assertion about the existence of $g$ in $\M$ with $g(u,u)\neq 0$. We deduce that $\M_u^L\cap \M_u^R$ contains no elements of rank $m$, as asserted.

We note that each element $\phi$, say, of the subspace $\M_u^L+\M_u^R$ of $\M$ satisfies
$\phi(u,u)=0$, since $u$ is in the left radical of each element of $\M_u^L$ 
and also in the right radical of each element of $\M_u^R$. In view of our earlier statement about the existence of $g$ in $\M$ with $g(u,u)\neq 0$, we see that $\M_u^L+\M_u^R\neq \M$ and hence $\dim(\M_u^L+\M_u^R)<\dim \M$. It follows that
\[
\dim \M_u^L+\dim \M_u^R<\dim \M+\dim(\M_u^L\cap \M_u^R).
\]

Now since $\M_u^L$ and $\M_u^R$ both contain elements of rank $m$, Lemma \ref{improved_M_u_dimension_bound} shows that
\[
\dim \M-m\leq \dim \M_u^L, \quad \dim \M-m\leq \dim \M_u^R.
\]
Thus we have
\[
2(\dim \M-m)<\dim \M+\dim(\M_u^L\cap \M_u^R)
\]
and consequently
the inequality 
\[
\dim \M<2m+\dim (\M_u^L\cap \M_u^R)
\]
is valid. We therefore have $\dim \M\leq 2m-1+\dim (\M_u^L\cap \M_u^R)$.

Finally, $U$ is contained in the left and right radical of each element of $\M_u^L\cap \M_u^R$. It follows that if $W$ is a complement of $U$ in $V$, each element of $\M_u^L\cap \M_u^R$ 
is determined by its restriction to $W\times W$ and thus we may identify $\M_u^L\cap \M_u^R$ with a subspace of $\Bil(W)$.
\end{proof}

\begin{theorem} \label{bilinear_constant_rank_bound}

Let $\M$ be a non-zero constant rank $m$ subspace of $\Bil(V)$. Then if $|K|\geq m+1$, we have $\dim \M\leq \max\,(n,2m-1)$. 
\end{theorem}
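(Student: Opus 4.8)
The plan is to reduce everything to two results already in hand: Theorem \ref{alternating_constant_rank_bound} for the purely alternating situation and Theorem \ref{bilinear_dimension_bound} for the genuinely non-alternating situation. The natural dichotomy is therefore whether or not $\M$ is contained in $\Alt(V)$.

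First I would dispose of the case $\M\leq \Alt(V)$. Here Theorem \ref{alternating_constant_rank_bound} already gives $\dim \M\leq \max(n-1,2m-1)$, which is certainly at most $\max(n,2m-1)$, so nothing further is needed. This case must be separated out because Theorem \ref{bilinear_dimension_bound} explicitly excludes subspaces lying inside $\Alt(V)$.

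Second, assume $\M$ is not contained in $\Alt(V)$, so that Theorem \ref{bilinear_dimension_bound} is in principle applicable. The key observation that makes that theorem usable is that, because $\M$ has constant rank $m$, a subspace of $\M$ contains an element of rank $m$ if and only if it is non-zero. Consequently the hypothesis of Theorem \ref{bilinear_dimension_bound}---that for every $w$ both $\M_w^L$ and $\M_w^R$ contain an element of rank $m$---is equivalent to the condition that $\M_w^L\neq 0$ and $\M_w^R\neq 0$ for all $w$. I would then split into two sub-cases. If this condition fails, there is some $w$ with $\M_w^L=0$ or $\M_w^R=0$, and Lemma \ref{M_u_dimension_bound} immediately yields $\dim \M\leq n$. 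If the condition holds, Theorem \ref{bilinear_dimension_bound} produces an element $u$ for which $\M_u^L\cap\M_u^R$ contains no element of rank $m$; by the constant rank property this intersection is zero, whereupon the accompanying inequality $\dim \M\leq 2m-1+\dim(\M_u^L\cap\M_u^R)$ collapses to $\dim \M\leq 2m-1$.

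Combining the two sub-cases gives $\dim \M\leq\max(n,2m-1)$ whenever $\M$ is not alternating, and together with the alternating case this finishes the argument. I do not expect a serious obstacle, since all the substantive work has been packaged into Theorems \ref{alternating_constant_rank_bound} and \ref{bilinear_dimension_bound}. The only point demanding care is the bookkeeping translation ``contains an element of rank $m$'' $\Leftrightarrow$ ``is non-zero''; it is precisely this equivalence that converts the conclusions of those theorems into the stated dimension bound.
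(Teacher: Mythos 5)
Your proposal is correct and follows essentially the same route as the paper's own proof: the paper likewise splits off the case $\M_w^L=0$ or $\M_w^R=0$ via Lemma \ref{M_u_dimension_bound}, defers the case $\M\leq\Alt(V)$ to Theorem \ref{alternating_constant_rank_bound}, and otherwise applies Theorem \ref{bilinear_dimension_bound}, using the constant rank hypothesis to force $\M_u^L\cap\M_u^R=0$ and hence $\dim\M\leq 2m-1$. The only difference is cosmetic ordering of the cases, and your explicit remark that ``contains an element of rank $m$'' is equivalent to ``non-zero'' under constant rank is exactly the translation the paper uses implicitly.
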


\begin{proof}
Suppose that for some $w$ in $V$, $\M_w^L=0$ or $\M_w^R=0$ . Then it follows from Lemma \ref{M_u_dimension_bound} that $\dim \M\leq n$, and there is nothing more to prove.

We may therefore assume that $\M_w^L\neq 0$ and $\M_w^R\neq 0$ for all $w\in V$.
The required dimension bound certainly follows from Theorem \ref{alternating_constant_rank_bound} if 
$\M$ is a subspace of $\Alt(V)$ and thus we may additionally assume that $\M$ is not contained in
$\Alt(V)$. It follows then from Theorem
\ref{bilinear_dimension_bound} that there exists  $u$ in $V$ such that 
$\M_u^L\cap \M_u^R$ contains no elements of rank $m$ and $\dim \M\leq 2m-1 +\dim (\M_u^L\cap \M_u^R)$. In view of the constant rank $m$ hypothesis, this implies that $\M_u^L\cap \M_u^R=0$ and hence
\[
\dim \M\leq 2m-1,
\]
as asserted.
\end{proof}

Following the strategy of previous sections, we may now prove an upper bound for the dimension of a subspace $\M$ of $\Bil(V)$ in terms of $|\rank(\M)|$.

\begin{theorem} \label{bilinear_several_ranks_bound}

Let $\M$ be a non-zero 
subspace of $\Bil(V)$, let $m$ be the largest integer in $\rank(\M)$ and let
$r=|\rank(\M)|$. Then if
$m\leq \lceil n/2\rceil$
and $|K|\geq m+1$, we have
\[
\dim \M\leq rn.
\]
\end{theorem}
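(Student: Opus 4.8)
The plan is to prove this by induction on $r = |\rank(\M)|$, mimicking the structure of the proofs of Theorems~\ref{alternating_several_ranks_bound} and \ref{symmetric_general_dimension_bound}. The base case $r=0$ is trivial since $\M$ is then the zero space, so I may assume $r\geq 1$ and that the bound $\dim\N\leq (r-1)n'$ holds for any subspace $\N$ of $\Bil(V')$ with $|\rank(\N)|\leq r-1$ on a space $V'$ of dimension $n'$, provided the analogous rank restriction is met.

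The argument naturally splits into two cases, paralleling Theorem~\ref{bilinear_dimension_bound}. First I would dispose of the easy case: suppose there exists some $w\in V$ such that one of $\M_w^L$ or $\M_w^R$ contains no element of rank $m$; say $\M_w^L$ has this property. Then $|\rank(\M_w^L)|\leq r-1$. Letting $U=\langle w\rangle$ and $U'$ a complement, every element of $\M_w^L$ has $w$ in its left radical, so I can identify $\M_w^L$ with a subspace of $\Bil(U')$ where $\dim U' = n-1$. The largest rank $m'$ occurring in $\M_w^L$ satisfies $m'\leq m-1\leq \lceil n/2\rceil - 1 \leq \lceil (n-1)/2\rceil$, so the induction hypothesis applies and gives $\dim \M_w^L\leq (r-1)(n-1)$. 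Combined with Lemma~\ref{M_u_dimension_bound}, which yields $\dim\M\leq \dim\M_w^L + n$, this produces $\dim\M\leq (r-1)(n-1)+n \leq rn$, completing the step in this case. (The symmetric subcase using $\M_w^R$ is identical.)

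The second case assumes that for every $w\in V$, both $\M_w^L$ and $\M_w^R$ contain elements of rank $m$. Here I would invoke Theorem~\ref{bilinear_dimension_bound}, but first I must handle the possibility that $\M\leq \Alt(V)$, in which case the sharper bound of Theorem~\ref{alternating_several_ranks_bound} already gives $\dim\M\leq rn - r(r+1)/2 \leq rn$ (noting $m\leq\lceil n/2\rceil$ together with $m$ even forces $m\leq\lfloor n/2\rfloor$, so that theorem's hypotheses are met). Assuming then that $\M\not\leq\Alt(V)$, Theorem~\ref{bilinear_dimension_bound} furnishes $u\in V$ with $\M_u^L\cap\M_u^R$ containing no element of rank $m$ and $\dim\M\leq 2m-1+\dim(\M_u^L\cap\M_u^R)$, where $\M_u^L\cap\M_u^R$ identifies with a subspace of $\Bil(W)$ for $W$ of dimension $n-1$. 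Since $|\rank(\M_u^L\cap\M_u^R)|\leq r-1$ and its top rank is at most $m-1\leq\lceil(n-1)/2\rceil$, induction gives $\dim(\M_u^L\cap\M_u^R)\leq (r-1)(n-1)$, whence $\dim\M\leq 2m-1+(r-1)(n-1)$.

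The main obstacle is the final arithmetic comparison: I need $2m-1+(r-1)(n-1)\leq rn$, equivalently $2m-1\leq rn-(r-1)(n-1) = n + (r-1)$. The hypothesis $m\leq\lceil n/2\rceil$ gives $2m\leq 2\lceil n/2\rceil\leq n+1$, so $2m-1\leq n\leq n+(r-1)$ for $r\geq 1$, and the inequality holds. This confirms why the restriction $m\leq\lceil n/2\rceil$ is exactly what is needed to close the induction, and explains the remark in the introduction that it is unclear whether this hypothesis is essential—it is precisely the bound that makes the $2m-1$ term from the bilinear case absorb into the linear growth $rn$.
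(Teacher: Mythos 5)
Your overall strategy---induction on $r$, splitting into the case where some $\M_w^L$ or $\M_w^R$ misses rank $m$, treating $\M\leq\Alt(V)$ separately, and otherwise invoking Theorem \ref{bilinear_dimension_bound} together with the arithmetic $2m-1\leq n$---is exactly the paper's proof. However, two of your steps are flawed.

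First, in the easy case you cannot identify $\M_w^L$ with a subspace of $\Bil(U')$: for a general bilinear form, $w\in\rad_L f$ makes $f(w,\cdot)$ vanish but says nothing about $f(\cdot,w)$, so in matrix terms the first row is zero while the first column need not be. Restriction to $U'\times U'$ is therefore neither injective on $\M_w^L$ (its kernel contains the rank-one forms supported on that column) nor rank-preserving. This identification is legitimate only for the intersection $\M_u^L\cap\M_u^R$, where $u$ lies in both radicals---which is precisely why Theorem \ref{bilinear_dimension_bound} is phrased that way. The repair is immediate and is what the paper does: apply the induction hypothesis to $\M_w^L$ as a subspace of $\Bil(V)$ itself (its largest rank is still at most $m\leq\lceil n/2\rceil$), getting $\dim\M_w^L\leq(r-1)n$, and then Lemma \ref{M_u_dimension_bound} gives $\dim\M\leq(r-1)n+n=rn$; the sharper $(r-1)(n-1)$ is neither available nor needed.

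Second, your parenthetical claim that $m\leq\lceil n/2\rceil$ together with $m$ even forces $m\leq\lfloor n/2\rfloor$ is false: take $n=3$, $m=2$, or $n=7$, $m=4$; whenever $n\equiv 3\pmod 4$, the integer $\lceil n/2\rceil=(n+1)/2$ is even. In such cases Theorem \ref{alternating_several_ranks_bound} does not apply, and its conclusion can genuinely fail: $\M=\Alt(V)$ with $n=3$ is a constant rank $2$ space of dimension $3>rn-r(r+1)/2=2$, although the weaker bound $rn$ still holds. To be fair, the paper's own proof invokes Theorem \ref{alternating_several_ranks_bound} at the same point without comment, so this gap is shared; but your proof asserts a false justification where the paper is merely silent. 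Closing the gap requires a separate argument for alternating $\M$ with $n$ odd and $m=\lceil n/2\rceil$, for instance by running the same induction through Theorem \ref{alternating_dimension_bound}: either some $\M_w$ misses rank $m$, whence $\dim\M\leq(r-1)n+(n-1)<rn$ by induction and Lemma \ref{M_u_alternating_bound}, or there exist $u,v$ with $\dim\M\leq 2m-1+\dim(\M_u\cap\M_v)\leq n+(r-1)(n-2)\leq rn$, the hypothesis check $m-2\leq\lceil(n-2)/2\rceil$ going through because alternating ranks are even. Your final arithmetic and the hypothesis verification $m-1\leq\lceil(n-1)/2\rceil$ in the main case are correct.
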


\begin{proof}
We proceed by induction on $r$.
If $\M\leq \Alt(V)$, we already have the inequality
\[
\dim \M\leq rn-\frac{r(r+1)}{2}
\]
from Theorem \ref{alternating_several_ranks_bound} and our proposed inequality is clearly a consequence of this sharper inequality.  We may therefore assume that $\M$ is not contained in $\Alt(V)$. 

Suppose next that for some $w$ in $V$, $\M_w^L$ contains no element of rank $m$.
Then we certainly have $\dim \M_w^L\leq (r-1)n$ by induction and Lemma \ref{M_u_dimension_bound} yields $\dim \M\leq rn$, as required. An identical argument
applies if $\M_w^R$ contains no element of rank $m$.

We may thus assume that for all $w\in V$, both $\M_w^L$ and $\M_w^R$ contain elements
of rank $m$. Theorem \ref{bilinear_dimension_bound} shows that in this case there exists an element $u$ in $V$ such that 
$\M_u^L\cap \M_u^R$ contains no elements of rank $m$ and the inequality 
\[
\dim \M\leq 2m-1+\dim (\M_u^L\cap \M_u^R)
\]
holds. 

We certainly have $|\rank(\M_u^L\cap \M_u^R)|\leq r-1$ and we may identify
$\M_u^L\cap \M_u^R$ with a subspace of $\Bil(W)$, where $W$ is a subspace of $V$ of dimension $n-1$. Now if $m'$ is the largest integer in $\rank(\M_u^L\cap \M_u^R)$, we have $m'\leq m-1$ and it is easy to verify that, since 
$m\leq \lceil n/2\rceil$, the inequality $m'\leq \lceil (n-1)/2\rceil$ also holds.

It follows by induction that 
\[
\dim (\M_u^L\cap \M_u^R)\leq (r-1)(n-1).
\]
Furthermore, since $m\leq \lceil n/2\rceil$, we have $2m-1\leq n$. Thus, our two displayed inequalities above lead to
\[
\dim \M\leq n+(r-1)(n-1)=rn-(r-1)<rn.
\]
This inequality completes the induction step and establishes the theorem.
\end{proof}

Theorem \ref{bilinear_several_ranks_bound} is optimal in non-trivial cases, as we shall now explain. Suppose that $K$ has a separable extension $L$, say, of finite degree $s>1$. Let $W$ be a vector space of dimension $m$ over $L$. Given any integer $r$ satisfying $1\leq r\leq m$, $\Bil(W)$ contains a subspace $\N$, say, of  dimension
$rm$ in which all elements have rank at most $r$. We may realize $\N$ by taking all $m\times m$ matrices with entries in $L$ whose first $r$ columns are arbitrary and whose last $m-r$ columns are all zero.

Let $V$ denote $W$ considered as a vector space of dimension $n=ms$ over $K$. Given $f$ in $\N$, we define $F$ in $\Bil(V)$ by setting $F(u,v)=\Tr(f(u,v))$ for all $u$ and $v$ in $V$, where $\Tr:L\to K$ is the trace form. We may verify that, since $\Tr$ is non-zero under the separability hypothesis, $\rank F=s\rank f$. The set of all elements $F$, as $f$ runs over $\N$, is then a subspace $\M$, say, of $\Bil(V)$ 
of dimension $rms=rn$, in which $|\rank(\M)|=r$. The elements of $\M^\times$ have rank $ts$, where $1\leq t\leq r$. This example shows that the dimension bound
in Theorem \ref{bilinear_several_ranks_bound} is optimal in this case.

It is possible to provide more detail about the structure of a subspace whose dimension
equals the upper bound given in Theorem \ref{bilinear_several_ranks_bound}, as we shall now
demonstrate.

\begin{theorem} \label{subspace_meeting_upper_bound}
Let $\M$ be a non-zero 
subspace of $\Bil(V)$, let $m$ be the largest integer in $\rank(\M)$ and let
$r=|\rank(\M)|$. Suppose that $\dim \M=rn$,
$m\leq \lceil n/2\rceil$
and $|K|\geq m+1$. Then for each integer $s$ satisfying $1\leq s\leq r$, there exists a subspace $\M_s$, say, of $\M$ such that $\dim \M_s=sn$ and $\rank(\M_s)$ consists of the
$s$ smallest integers in $\rank(\M)$.
\end{theorem}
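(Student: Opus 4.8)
The plan is to induct on $r$, keeping $n$ fixed, and to show that the maximality hypothesis $\dim \M = rn$ forces $\M$ to contain a subspace of dimension $(r-1)n$ that again meets the corresponding upper bound; one then applies the inductive hypothesis inside that subspace. Write $\rank(\M)=\{m_1<m_2<\cdots<m_r\}$, so that $m_r=m$. The base case $r=1$ is immediate: take $\M_1=\M$, which has dimension $n$ and whose unique rank $m_1$ is the smallest element of $\rank(\M)$. For $s=r$ in general, take $\M_r=\M$; the content is to produce the smaller members of the family.

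For the inductive step I would first observe that $\M$ cannot be contained in $\Alt(V)$: if it were, Theorem \ref{alternating_several_ranks_bound} would give $\dim \M\leq rn-\tfrac{r(r+1)}{2}<rn$, contradicting $\dim \M=rn$. Next I would revisit the case analysis in the proof of Theorem \ref{bilinear_several_ranks_bound}. That proof shows that if $\M\not\leq\Alt(V)$ and \emph{every} $\M_w^L$ and $\M_w^R$ contains an element of rank $m$, then one lands in the situation of Theorem \ref{bilinear_dimension_bound} and obtains the \emph{strict} inequality $\dim \M\leq rn-(r-1)<rn$. Since we are assuming equality $\dim \M=rn$, this possibility is excluded, so there must exist a vector $w$ for which (say) $\M_w^L$ contains no element of rank $m$; the case where it is $\M_w^R$ is handled by the symmetric argument.

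Fixing such a $w$, set $\N=\M_w^L$. Because $\N\leq\M$ avoids rank $m=m_r$, its rank set is contained in $\{m_1,\ldots,m_{r-1}\}$, so $|\rank(\N)|\leq r-1$ and its largest rank $m'$ satisfies $m'<m\leq\lceil n/2\rceil$ and $|K|\geq m+1>m'+1$. Hence Theorem \ref{bilinear_several_ranks_bound} applies to $\N$ and gives $\dim \N\leq |\rank(\N)|\,n\leq (r-1)n$. On the other hand Lemma \ref{M_u_dimension_bound} gives $\dim \N\geq \dim \M-n=(r-1)n$. These two inequalities force $\dim \N=(r-1)n$, and comparing this once more with $\dim \N\leq |\rank(\N)|\,n$ forces $|\rank(\N)|=r-1$; since $\rank(\N)\subseteq\{m_1,\ldots,m_{r-1}\}$ already has at most $r-1$ elements, we conclude $\rank(\N)=\{m_1,\ldots,m_{r-1}\}$. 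Thus $\N$ is a subspace of $\Bil(V)$ with exactly $r-1$ ranks, namely the $r-1$ smallest ranks of $\M$, of dimension exactly $(r-1)n$, with largest rank $m_{r-1}\leq\lceil n/2\rceil$ over a field with $|K|\geq m_{r-1}+1$: these are precisely the hypotheses of the present theorem with $r$ replaced by $r-1$.

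Applying the inductive hypothesis to $\N$ then produces, for each $s$ with $1\leq s\leq r-1$, a subspace $\M_s\leq\N\leq\M$ with $\dim \M_s=sn$ whose rank set consists of the $s$ smallest elements of $\rank(\N)=\{m_1,\ldots,m_{r-1}\}$, that is $\{m_1,\ldots,m_s\}$, which are exactly the $s$ smallest elements of $\rank(\M)$. Together with $\M_r=\M$, this yields the required family for all $1\leq s\leq r$ and completes the induction. I expect the only delicate point to be the bookkeeping that pins down $\rank(\N)$ exactly: one must combine the upper bound from Theorem \ref{bilinear_several_ranks_bound} with the lower bound from Lemma \ref{M_u_dimension_bound} to force $\dim \N=(r-1)n$, and then invoke the same upper bound a second time to upgrade $|\rank(\N)|\leq r-1$ to the equality $|\rank(\N)|=r-1$. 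Everything else is a direct transcription of the case analysis already carried out in the proof of Theorem \ref{bilinear_several_ranks_bound}.
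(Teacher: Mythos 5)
Your proposal is correct and takes essentially the same route as the paper's proof: rule out the possibility that every $\M_w^L$ and $\M_w^R$ contains a rank-$m$ element (via Theorem \ref{bilinear_dimension_bound} combined with the bound of Theorem \ref{bilinear_several_ranks_bound}), then squeeze $\dim \M_w^L$ between the lower bound of Lemma \ref{M_u_dimension_bound} and the upper bound of Theorem \ref{bilinear_several_ranks_bound} to force $\dim \M_w^L=(r-1)n$ and $|\rank(\M_w^L)|=r-1$, and finish by downward induction on $r$. The only difference is presentational: you make explicit the bookkeeping identifying $\rank(\M_w^L)$ with the $r-1$ smallest ranks of $\M$, which the paper compresses into the phrase ``reverse induction on $r$''.
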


\begin{proof}
We first note that $\M$ is not contained in $\Alt(V)$, since otherwise we have
$\dim \M<rn$ by Theorem \ref{alternating_several_ranks_bound}. 
We claim that there is some $w\in V$ such that either $\M_w^L$ or $\M_w^R$ contains no element of rank $m$. For if this is not the case, Theorem \ref{bilinear_dimension_bound} shows that we can find $u\neq 0$ such that $\M_u^L\cap \M_u^R$ contains no element
of rank $m$ and
\[
\dim(\M_u^L\cap \M_u^R)\geq \dim \M-(2m-1).
\]

Now $|\rank(\M_u^L\cap \M_u^R)|\leq r-1$, since $\M_u^L\cap \M_u^R$ contains no element
of rank $m$, and hence we have $\dim (\M_u^L\cap \M_u^R)\leq (r-1)n$ by Theorem \ref{bilinear_several_ranks_bound}. It follows that 
\[
(r-1)n\geq rn-(2m-1)
\]
and hence $n\leq 2m-1$. This inequality is incompatible with the hypothesis that $m\leq \lceil n/2\rceil$ and hence establishes our claim above.

We may as well assume therefore that there is some $w$ such that $\M_w^L$ contains no
element of rank $m$ and thus $|\rank(\M_w^L)|\leq r-1$. Lemma \ref{M_u_dimension_bound}
implies that $\dim \M_w^L\geq (r-1)n$, whereas Theorem \ref{bilinear_several_ranks_bound}
implies that $\dim \M_w^L\leq (r-1)n$. It follows that $\dim \M_w^L=(r-1)n$
and $|\rank(\M_w^L)|= r-1$. We now complete the proof by reverse induction on $r$.
\end{proof}

It would be valuable to know if Theorem \ref{bilinear_several_ranks_bound}
holds in cases where the hypothesis $m\leq \lceil n/2\rceil$ is weakened. We describe below a simple example of a positive answer.

\begin{theorem} \label{two_ranks_bound}

Suppose that $n$ is even and $\M$ is a subspace of $\Bil(V)$ such that 
$\rank \M=\{ n/2, n\}$. Then if $|K|\geq n/2+1$, we have $\dim \M\leq 2n$.

\end{theorem}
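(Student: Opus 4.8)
The plan is to exploit the fact that here the largest rank $m$ equals $n$, so that the rank-$n$ elements of $\M$ are precisely the nondegenerate forms, whose left radicals are zero. This makes Theorem \ref{left_radical_right_radical} vacuous for the top rank, so the inductive machinery of Theorem \ref{bilinear_several_ranks_bound} is not directly available; instead the degeneracy works in our favour. The central observation is that for any fixed $u\in V^\times$, the subspace $\M_u^L$ can contain no rank-$n$ form: if $f\in\M_u^L$ is nonzero then $u\in\rad_L f$ with $u\neq 0$, forcing $\dim\rad_L f\geq 1$ and hence $\rank f\leq n-1$. Since the only ranks occurring in $\M$ are $n/2$ and $n$, every nonzero element of $\M_u^L$ must have rank exactly $n/2$. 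Thus $\M_u^L$ is either zero or a constant rank $n/2$ subspace of $\Bil(V)$.

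First I would fix any $u\in V^\times$ and use this observation to identify $\M_u^L$ as a (possibly zero) constant rank $n/2$ subspace. Since $|K|\geq n/2+1$, Theorem \ref{bilinear_constant_rank_bound} applied with largest rank $n/2$ gives $\dim\M_u^L\leq\max(n,2(n/2)-1)=\max(n,n-1)=n$, the bound holding trivially in the degenerate case $\M_u^L=0$. Next I would combine this with Lemma \ref{M_u_dimension_bound}, which yields $\dim\M_u^L\geq\dim\M-n$. Putting the two inequalities together gives $\dim\M-n\leq n$, that is, $\dim\M\leq 2n$, as required.

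Notably, no induction on $r$ is needed, and no case distinction between $\Alt(V)$ and its complement arises, because Theorem \ref{bilinear_constant_rank_bound} applies to arbitrary constant rank subspaces of $\Bil(V)$. There is no substantial obstacle once the central observation is in place: the entire argument hinges on recognizing that the hypothesis $m=n$, which trivializes Theorem \ref{left_radical_right_radical}, simultaneously forces $\M_u^L$ to have constant rank $n/2$, thereby reducing the problem to the already-established constant rank bound. The only points requiring care are the edge case $\M_u^L=0$ and the verification that $\M_u^L$ omits rank-$n$ elements, both of which are immediate, together with the minor observation that $n\geq 2$ (so that $n/2$ is a genuine positive rank) since rank $n/2$ is assumed to occur.
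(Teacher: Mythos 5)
Your proof is correct and takes essentially the same route as the paper: both arguments fix $u\in V^\times$, note that $\M_u^L$ contains no rank-$n$ elements and is therefore a (possibly zero) constant rank $n/2$ subspace, and then combine Theorem \ref{bilinear_constant_rank_bound} (giving $\dim \M_u^L\leq n$) with Lemma \ref{M_u_dimension_bound} to conclude $\dim \M\leq 2n$. Your explicit treatment of the edge case $\M_u^L=0$ is a minor refinement that the paper leaves implicit.
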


\begin{proof}
Let $u$ be any  element of $V^\times$. $\M_u^L$ cannot contain any elements of rank $n$ and hence must be a constant rank $n/2$ subspace of $\Bil(V)$. Theorem \ref{bilinear_constant_rank_bound} implies that $\dim \M_u^L\leq n$ and then
Lemma \ref{M_u_dimension_bound} implies that $\dim \M\leq 2n$, as desired.
\end{proof}

The discussion before this theorem shows that the bound of Theorem \ref{two_ranks_bound} can be optimal in non-trivial ways.


\begin{thebibliography}{9}




\bibitem{DGo} P. Delsarte and J. M. Goethals, \emph{Alternating bilinear forms over 
$GF(q)$}, J. Combinatorial Theory Ser. A {\bf 19} (1975), 26-50.

\smallskip

\bibitem{DGS} J.-G. Dumas, R. Gow and J. Sheekey, \emph{Rank properties of subspaces of symmetric and hermitian matrices over finite fields}, Finite Fields Appl. {\bf 17} (2011), 504-520.

\smallskip

\bibitem{G1} R. Gow, \emph{A dimension bound for constant rank subspaces of matrices over a finite field}, 
arXiv/1501.02721, 12 Jan 2015.

\smallskip

\bibitem{G2} R. Gow, \emph{Dimension bounds for constant rank subspaces
of symmetric bilinear forms over a finite field}, arXiv/1502.05547, 19 Feb 2015.
\smallskip


\bibitem{G4} R. Gow, \emph{Rank-related dimension bounds for subspaces of bilinear forms over finite fields},  arXiv/1703.07266, 21 Mar 2017.

\smallskip

\bibitem{GQ} R. Gow and R. Quinlan, \emph{On the vanishing of subspaces of alternating bilinear forms}, Linear and Multilinear Algebra {\bf 54} (2006), 415-428.

\smallskip






\end{thebibliography}
\end{document}